\numberwithin{equation}{section}
\theoremstyle{definition}
\newtheorem{Definition}{Definition}[section]
\newtheorem{Example}[Definition]{Example}
\newtheorem{Remark}[Definition]{Remark}
\newtheorem{Notation}[Definition]{Notation}
\theoremstyle{plain}
\newtheorem{Theorem}[Definition]{Theorem}
\newtheorem{Proposition}[Definition]{Proposition}
\newtheorem{Corollary}[Definition]{Corollary}
\newtheorem{Lemma}[Definition]{Lemma}
\newcommand{\al}{\alpha}
\newcommand{\ga}{\gamma}
\newcommand{\Ga}{\Gamma}
\newcommand{\ep}{\varepsilon}
\newcommand{\la}{\lambda}
\newcommand{\La}{\Lambda}
\newcommand{\si}{\sigma}
\newcommand{\Z}{\mathbb{Z}}
\newcommand{\C}{\mathbb{C}}
\newcommand{\K}{\Bbbk}
\newcommand{\Fgl}{\mathfrak{gl}}
\newcommand{\Fb}{\mathfrak{b}}
\newcommand{\Fg}{\mathfrak{g}}
\newcommand{\Fk}{\mathfrak{k}}
\newcommand{\Fm}{\mathfrak{m}}
\newcommand{\Fn}{\mathfrak{n}}
\newcommand{\Ft}{\mathfrak{t}}
\newcommand{\CC}{\mathcal{C}}
\newcommand{\CD}{\mathcal{D}}
\newcommand{\CF}{\mathcal{F}}
\newcommand{\CH}{\mathcal{H}}
\newcommand{\CK}{\mathcal{K}}
\newcommand{\CS}{\mathcal{S}}
\newcommand{\op}{\operatorname}
\DeclareMathOperator{\Ann}{Ann}
\DeclareMathOperator{\Aut}{Aut}
\DeclareMathOperator{\codim}{codim}
\DeclareMathOperator{\End}{End}
\DeclareMathOperator{\Frac}{Frac}
\DeclareMathOperator{\Hom}{Hom}
\DeclareMathOperator{\Id}{Id}
\DeclareMathOperator{\MaxSpec}{MaxSpec}
\DeclareMathOperator{\cfs}{MaxSpec^{cf}}
\DeclareMathOperator{\Stab}{Stab}
\DeclareMathOperator{\GL}{GL}
\renewcommand{\hat}{\widehat}
\renewcommand{\tilde}{\widetilde}
\newcommand{\tri}{\triangleright}
\newcommand{\defby}{\stackrel{\text{\tiny def}}{=}}
\title{Harish-Chandra modules over Hopf Galois orders}
\author{Jonas T. Hartwig}
\date{May 1, 2021}
\address{Department of Mathematics, Iowa State University, Ames, IA-50011, USA}
\email{jth@iastate.edu}
\urladdr{http://jthartwig.net}
\begin{document}
\begin{abstract}
The theory of Galois orders was introduced by Futorny and Ovsienko \cite{FutOvs2010}. We introduce the notion of $\mathcal{H}$-Galois $\La$-orders. These are certain noncommutative orders $F$ in a smash product of the fraction field of a noetherian integral domain $\La$ by a Hopf algebra $\CH$ (or, more generally, by a coideal subalgebra of a Hopf algebra). They are generalizations of Webster's principal flag orders \cite{Web2019}. Examples include Cherednik algebras, as well as examples from Hopf Galois theory. We also define spherical Galois orders, which are the corresponding generalizations of principal Galois orders introduced by the author \cite{Har2020}.

The main results are
(1) for every maximal ideal $\mathfrak{m}$ of $\La$ of finite codimension, there exists a simple Harish-Chandra $F$-module in the fiber of $\mathfrak{m}$;
(2) for every character of $\La$ we construct a canonical simple Harish-Chandra module as a subquotient of the module of local distributions;
(3) if a certain stabilizer coalgebra is finite-dimensional, then the corresponding fiber of simple Harish-Chandra modules is finite;
(4) centralizers of symmetrizing idempotents are spherical Galois orders and every spherical Galois order appears that way.
\end{abstract}

\maketitle

\section{Introduction}

In \cite{DroFutOvs1994}, the category of Harish-Chandra modules over an associative algebra $A$ with respect to a Harish-Chandra subalgebra $\Gamma$ was introduced, and some tools to study it were provided. The objects are $A$-modules which are sums of finite-dimensional $\Gamma$-modules. 
Examples include Harish-Chandra modules over $U(\Fg)$ with respect to $U(\Fk)$ (where $\Fk$ is a reductive or nilpotent subalgebra of a finite-dimensional Lie algebra $\Fg$ over a field of characteristic zero), which motivates the name.
Another example is the category of Gelfand-Tsetlin modules over $U(\Fgl_n)$. These are Harish-Chandra modules with respect the Gelfand-Tsetlin subalgebra $\Ga$ of $U(\Fgl_n)$, generated by $\big\{Z\big(U(\Fgl_k)\big)\big\}_{k=1}^n$.

In \cite{FutOvs2010}\cite{FutOvs2014}, a class of algebras called \emph{Galois orders} (or \emph{Galois algebras}) was introduced and investigated. These are subalgebras of the algebra of $G$-invariants in a skew monoid algebra $L\rtimes\mathcal{M}$ with respect to the action of a finite group $G$, and come with a commutative Harish-Chandra subalgebra $\Ga$.
One of the main results of \cite{FutOvs2014} is a sufficient condition for there to be only finitely many isomorphism classes of simple Harish-Chandra modules supported on a given maximal ideal. The main tool was the framework from \cite{DroFutOvs1994}.
So called \emph{principal Galois orders} were introduced in \cite{Har2020}. These are certain Galois orders which naturally act on their subalgebra $\Ga$. 
This action was pointed out in \cite{Vis2018}, in the case of $U(\mathfrak{gl}_n)$.
It was shown in \cite{FutMolOvs2010} that finite W-algebras of type A are examples of Galois orders and in \cite{Har2020} that they are principal Galois orders.
In \cite[Sec.~1.3]{Wee2019} it was shown that orthogonal Gelfand-Tsetlin (OGZ) algebras from \cite{Maz1999} are in fact isomorphic to finite W-algebras of type A.
Webster \cite{Web2019} showed that the theory of principal Galois orders can be clarified by regarding these algebras as centralizer subalgebras of Galois orders with trivial $G$ he called \emph{principal flag orders}. In \cite{Web2019} it was shown that Coulomb branches in the sense of Braverman-Finkelberg-Nakajima \cite{BraFinNak2018} are examples of principal Galois orders. In \cite{LePWeb2019} it was shown that the spherical subalgebra of rational Cherednik algebras of imprimitive complex reflection groups $G(\ell,p,n)$ are also principal Galois orders. 

In this paper, we propose a generalization of principal flag orders, in which the skew monoid algebra $L\rtimes\mathcal{M}$ is replaced by a smash product $L\#\CH$, where $\CH$ is a coideal subalgebra of a Hopf algebra.
To make the conditions on these smash products precise, in Section \ref{sec:settings} we introduce the notion of a \emph{(Hopf Galois order) setting} $(\CH,\La)$, and provide numerous examples of such settings in Examples \ref{ex:Hopf-Galois}--\ref{ex:quantum-Borel}. In particular, Example \ref{ex:Hopf-Galois} implies that Hopf Galois extensions $\La^{\op{co}\CH}\subset\La$ give rise to settings $(\CH^\circ, \La)$.

In Section \ref{sec:Hopf-Galois-orders}, we define the algebras of interest. These are certain subalgebras $F$ of the smash products $L\#\CH$. We call them \emph{Hopf Galois orders} (or \emph{$\CH$-Galois $\La$-orders} when referencing the setting). 
We follow \cite{Web2019} and regard the centralizer (or ``spherical'') subalgebras as a secondary object, and discuss these later in Section \ref{sec:spherical}. 
See Figure \ref{fig:figure1} for the relationship between the different classes of algebras.
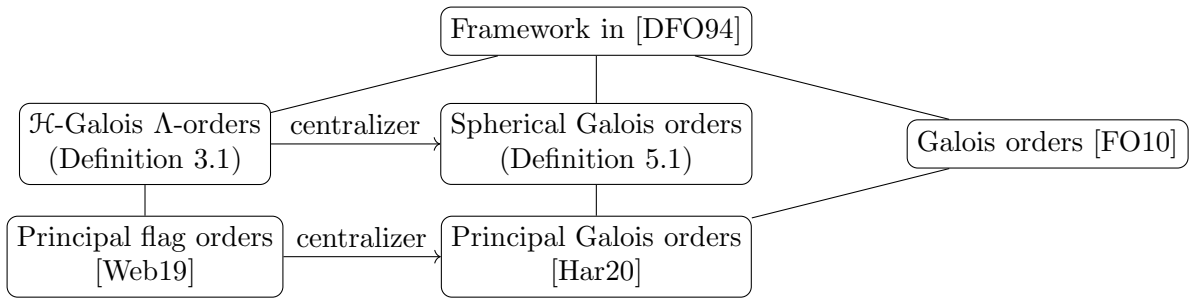
\begin{figure}
\begin{center}
\begin{tikzpicture}[every node/.style = {shape=rectangle, rounded corners, draw, align=center}]
\node (DFO) at (0,3) {Framework in \cite{DroFutOvs1994}};
\node (HGO) at (-6,1.5) {$\CH$-Galois $\La$-orders\\ (Definition \ref{def:Galois-order})};
\node (PFO) at (-6,0){Principal flag orders\\  \cite{Web2019}};
\node (SGO) at (0,1.5) {Spherical Galois orders\\ (Definition \ref{def:spherical-Galois-order})};
\node (PGO) at (0,0) {Principal Galois orders\\ \cite{Har2020}};
\node (GO) at (6,1.5) {Galois orders \cite{FutOvs2010}};
\draw (DFO) -- (HGO) -- (PFO);
\draw (DFO) -- (SGO) -- (PGO);
\draw (DFO) -- (GO) -- (PGO);
\draw[->] (HGO) -- node[draw=none, auto] {centralizer} (SGO);
\draw[->] (PFO) -- node[draw=none, auto] {centralizer} (PGO);
\end{tikzpicture}
\end{center}
\caption{Classes of algebras. Up is more general.}
\label{fig:figure1}
\end{figure}
We give several examples, including rational Cherednik algebras attached to an arbitrary subgroup of $\GL(V)$, using the Dunkl-Opdam representation. 
(Thus the analogy between $U(\Fg)$ and the sperical subalgebra of rational Cherednik algebras alluded to in \cite[Analogy 3]{Rou2005} can be made precise (when $\Fg=\Fgl_n$) by regarding both things as centralizer subalgebras of Hopf Galois orders.)
We also prove some lemmas, in particular that $\La$ is maximal commutative in any Hopf Galois order $F$ (Lemma \ref{lem:maxcomm}), and that the short exact sequence of left $\La$-modules $0\to\La\to F\to F/\La\to 0$ splits (Lemma \ref{lem:direct-sum}). The latter result is trivial to prove, but we believe is of some importance and have not seen it stated before. We end Section \ref{sec:Hopf-Galois-orders} by proving that Hopf Galois orders satisfy the Futorny-Ovsienko property (Theorem \ref{thm:FO}).

In Section \ref{sec:HC-modules} we prove the main results about Harish-Chandra modules over Hopf Galois orders, which can be summarized as follows. For a maximal ideal $\Fm\subset\La$ of finite codimension, let $\op{Irr}(F,\Fm)$ be the set of isomorphism classes of simple Harish-Chandra $F$-modules (with respect to $\La$) containing a nonzero generalized weight vector of weight $\Fm$.
In Theorem \ref{thm:nonempty}, we show that $\op{Irr}(F^{\mathrm{op}},\Fm)$ is non-empty. (We use the opposite algebra rather than talking about right modules in some of these theorems.) To the best of our knowledge such an existence theorem is new in this generality, even in the case of principal flag orders. The proof is not constructive but is based on the split short exact sequence mentioned above.

Theorem \ref{thm:canonical} concerns the case when $\Fm$ has codimension one. The historical context is as follows.
So called \emph{derivative tableaux} were introduced in \cite{FutGraRam2016} as basis vectors for singular Gelfand-Tsetlin modules over $U(\Fgl_n)$.
Vishnyakova interpreted these as local distributions \cite{Vis2018}.
In \cite{RamZad2018}, a simple Gelfand-Tsetlin module over $U(\Fgl_n)$ was constructed for each character $\la$ of the Gelfand-Tsetlin subalgebra in a uniform fashion.
The closely related notion of \emph{canonical simple Gelfand-Tsetlin module} was given in \cite{EarMazVis2020} in the more general setting of OGZ algebras. This was further generalized to principal Galois orders in \cite{Har2020} and principal flag orders in \cite{Web2019}. 
Building on these ideas, in Theorem \ref{thm:canonical} we construct, for any $\Fm$ of codimension one, a simple Harish-Chandra module in $\op{Irr}(F^{\mathrm{op}},\Fm)$ as a subquotient of the Harish-Chandra module of local distributions.

It was shown in \cite{SilWeb2020} that any simple Gelfand-Tsetlin module over $U(\Fgl_n)$ is a canonical module. It would be interesting to know the extent to which such a result can be generalized.

Our third result about Harish-Chandra modules over Hopf Galois orders provides a sufficient condition for the set $\op{Irr}(F,\Fm)$ to be finite (Theorem \ref{thm:main}). In the special case of principal flag orders, in \cite{Web2019} it is shown that $\op{Irr}(F,\Fm)$ is finite if a certain stabilizer subgroup is finite. The analogous statement for Galois orders was proved in \cite{FutOvs2014}. To generalize this result we need a Hopf algebra analog of the stabilizer subgroup. We discuss this somewhat technical topic in Section \ref{sec:stabilizer}. The power of Theorem \ref{thm:main} is that the sufficient condition only involves the setting $(\CH,\La)$ and the cofinite maximal ideal $\Fm$, but the conclusion applies equally to all Hopf Galois orders in that setting. On the other hand, one downside of this is that the result does not apply to rational Cherednik algebras in the rational-differential setting because there are Hopf Galois orders in the same setting for which the conclusion fails (see Example \ref{ex:infinite} for a prototypical case). In Theorem \ref{thm:finite-weight-spaces-general} we show that, under a weaker assumption involving the coradical filtration, the generalized weight spaces are finite-dimensional.

Lastly, in Section \ref{sec:spherical}, we specialize to the case when $\CH$ contains a finite group $W$ which allows us to consider the centralizer algebra $eFe$ where $e\in\K W$ is the symmetrizing idempotent of $W$ (we assume the characteristic of $\K$ does not divide the order of $W$).
First we define the notion of a \emph{spherical Galois order}. These algebras are generalizations of principal Galois orders from \cite{Har2020}. Then, generalizing \cite{Web2019}, we show that spherical Galois orders are the same thing as centralizer algebras $eFe$ of Hopf Galois orders. We state some corollaries of this fact and comment on the Morita equivalence between $F$ and $eFe$.

\section*{Acknowledgements}
The author gratefully acknowledges support from Simons Collaboration Grant for Mathematicians, award number 637600.
The author thanks Mark Colarusso, Sam Evens, Erich Jauch, Joanna Meinel, Catharina Stroppel, Akaki Tikaradze, and Ben Webster for helpful comments and discussions.

\section{Settings}\label{sec:settings}
Unless otherwise stated, we work over an arbitrary ground field $\K$.

\begin{Definition} \label{def:setting}
A \emph{Hopf Galois order setting} is a pair $(\CH,\La)$ where
 $\CH$ is a left coideal subalgebra of a Hopf algebra $\tilde\CH$ with comultiplication $\Delta:\tilde\CH\to \tilde\CH\otimes\tilde\CH$, $x\mapsto x_{(1)}\otimes x_{(2)}$, counit $\ep:\tilde\CH\to\K$, and invertible antipode $S:\tilde\CH\to\tilde\CH^{\mathrm{op},\mathrm{cop}}$;
and $\La$ is a left $\tilde\CH$-module algebra such that
\begin{enumerate}[{\rm (i)}]
\item $\La$ is a noetherian integral domain,
\item the action of $\tilde\CH$ on $\La$ extends to an action on the field of fractions $L=\Frac(\La)$,
\item $\La$ is faithful as a left module over the smash product $\La\#\CH$.
\end{enumerate}
\end{Definition}

\begin{Remark} \phantom{X}
\label{sec:settings-remark}
\begin{enumerate}
\item We regard $\tilde\CH$ as part of the data which specifies $\CH$.
\item Condition (ii) holds for example if the coradical of $\tilde\CH$ is finite-dimensional or cocommutative \cite{SkrVan2006}, or if $\tilde\CH$ acts locally finitely on $\La$ \cite{Skr2020}.
\item Condition (iii) holds iff $L$ is faithful as a left module over $L\#\CH$, and is well-known to be related to Hopf Galois theory, see Example \ref{ex:Hopf-Galois}.
\end{enumerate}
\end{Remark}

\begin{Notation} \phantom{X}
\begin{enumerate}
\item The field of fractions of $\La$ will be denoted by $L$.
\item The left action of $\tilde\CH$ on $\La$ will be denoted by 
$\tri:\tilde\CH\otimes \La \to \La$,\; $x\otimes f\mapsto x\tri f$. Thus,
\begin{align*}
(xy)\tri a &= x\tri(y\tri a),& 1_\CH\tri a &= a,\\
x\tri(ab)&=(x_{(1)}\tri a)\cdot (x_{(2)}\tri b),& x\tri 1_\La&=\ep(x)1_\La,
\end{align*}
for all $x,y\in\tilde\CH$ and $a,b\in \La$. The same symbol $\tri$ is used for the action of $\tilde\CH$ on $L$.
\item If $A$ is a left comodule algebra over a Hopf algebra $\tilde\CH$, and $\CH$ is a left coideal subalgebra of $\tilde\CH$, the \emph{smash product} $A\#\CH$ is
the unique algebra satisfying
\begin{enumerate}[{\rm (\#1)}]
\item $A$ and $\CH$ are subalgebras of $A\#\CH$;
\item $A\otimes\CH\to A\#\CH$, $a\otimes x\mapsto ax$ is a vector space isomorphism;
\item the following cross relation holds for all $x\in\CH$ and $a\in A$:
\begin{equation}\label{eq:smash}
xa = (x_{(1)}\tri a)\cdot x_{(2)}.
\end{equation}
\end{enumerate}
\item When $W$ is a group acting by automorphisms on a Hopf algebra $\tilde\CH$ we put $\tilde\CH\rtimes W\defby \tilde\CH\#\K W$, which is naturally a Hopf algebra. Similarly, if $W$ preserves a left coideal subalgebra $\CH$ of $\tilde\CH$, then $\CH\rtimes W\defby\CH\#\K W$ is a left coideal subalgebra of $\tilde\CH\rtimes W$.
\item For $X\in L\#\tilde\CH$ we let $\hat X$ denote the corresponding linear endomorphism of $L$. Explicitly,
\begin{align}
\hat f(g)&=fg, \quad \forall f,g\in L,\\
\hat x(f)&=x\tri f, \quad \forall x\in\tilde\CH,\,f\in L.
\end{align}
In particular, for any $X\in L\#\tilde\CH$ and $f\in L$, we have
 $\hat X(f) = \hat{X}\hat f(1_L)$.
\end{enumerate}
\end{Notation}

\begin{Example}\label{ex:Hopf-Galois}
Let $\CH$ be a Hopf algebra and $L$ a right $\CH$-comodule algebra such that $L$ is a field. 
Let $\CH^\circ$ be the finite dual and regard $L$ as a left $\CH^\circ$-module algebra.
Suppose that the Galois map $\beta:L\otimes_{L^{\op{co}\CH}} L \to L \otimes \CH,\, a\otimes b\mapsto ab_{(0)}\otimes b_{(1)}$ is surjective.
Then $L$ is a faithful $L\#\CH^\circ$-module. Consequently, if $\La\subset L$ a noetherian $\CH^\circ$-module subalgebra with fraction field $L$, then 
$(\CH^\circ,\La)$ is a Hopf Galois order setting.
\begin{proof}
Properties (i) and (ii) hold by construction, so it remains to prove that (iii) holds.
In the special case when $\CH$ is a finite-dimensional Hopf algebra, $\beta$ is surjective iff $L$ is a faithful $L\#\CH^\circ$-module (\cite[Thm.~8.3.1 and Thm.~8.3.7]{Mon1993}).
The general case must be well-known but we were not able to find the precise statement in the literature and therefore provide the details.
Let $X=\sum_{i=1}^n a_ix_i\in L\#\CH^\circ$ where $\{x_i\}_{i=1}^n$ is linearly independent over $\K$ and $a_i\in L$. For $h\in\CH$, let $h^{[1]}\otimes h^{[2]}\in\beta^{-1}(\{1\otimes h\})$. Since $\beta$ is a map of right $\CH$-comodules one checks that $h^{[1]}{h^{[2]}}_{(0)}\otimes {h^{[2]}}_{(1)}=1\otimes h$ (see eg. \cite{Sch1990}). Thus $h^{[1]}\hat X(h^{[2]}) = \sum_{i=1}^n a_i h^{[1]} (x_i \tri h^{[2]})=\sum_{i=1}^n a_i h^{[1]} {h^{[2]}}_{(0)} x_i({h^{[2]}}_{(1)})= \sum_i a_i x_i(h)$. Let $h_i\in\CH$ with $x_i(h_j)=\delta_{ij}$. Thus, if $\hat X(b)=0$ for all $b\in\La$, then $0=h_i^{[1]}\hat X(h_i^{[2]})=a_i$ for all $i$.
\end{proof}
\end{Example}

\begin{Example}\label{ex:sub}
If $(\CH,\La)$ is a Hopf Galois order setting, and $\CH'$ is a left coideal subalgebra of $\tilde\CH$ contained in $\CH$, then $(\CH',\La)$ is a Hopf Galois order setting.
\end{Example}

\begin{Example}
Let $(\CH,\La)$ be any Hopf Galois order setting. Let $\La[t]=\La\otimes\K[t]$ where $t$ is an indeterminate. Let $\CH[\partial]=\CH\otimes\K[\partial]$ where $\partial$ is a primitive generator acting as $\Id_\La\otimes\frac{d}{dt}$ on $\La[t]$. If $\op{char}\K=0$ then $(\CH[\partial],\La[t])$ is a Hopf Galois order setting. Indeed, the relation $\partial^n t - t\partial^n = n\partial^{n-1}$ shows that any nonzero $(\K[t],\K[t])$-subbimodule of $\La[t]\#\CH[\partial]$ has nonzero intersection with $\La[t]\#\CH$ and $\La[t]\#\CH\cap \Ann_{\La[t]\#\CH[\partial]}(\La[t])=\Ann_{\La\#\CH}(\La)[t]=0$. Similarly, if $\op{char}\K=p$ then $(\CH[\partial]/(\partial^p),\La[t])$ is a Hopf Galois order setting.

More generally, if $\partial$ acts by derivations on $\tilde\CH$, perserving $\CH$, and on $\La$ in a compatible way (i.e. $\partial\tri(x\tri a)-x\tri(\partial\tri a)=(\partial\tri x)\tri a$ for all $x\in\tilde\CH$ and $a\in\La$) then $(\CH\#\K[\partial],\La[t])$ is a Hopf Galois order setting when $\op{char}\K=0$, and $(\CH\#\K[\partial]/(\partial^p),\La[t])$ is a Hopf Galois order setting when $\op{char}\K=p$ and $\partial^p$ acts as zero on $\tilde\CH$ and $\La$.
\end{Example}

\begin{Example}
\label{ex:example1}
Let $\La$ be a noetherian integral domain over $\K$.
Let $\Fg$ be a Lie algebra acting by derivations on $\La$, and let $U(\Fg)$ be the universal enveloping algebra of $\Fg$. Suppose that
 $(\La\Fg)\cap\Ann_{\La\# U(\Fg)}(\La)=0$.
Then $(U(\Fg),\La)$ is a Hopf Galois order setting. Furthermore, if $W$ is a group acting by algebra automorphisms on $\La\# U(\Fg)$ preserving $\La$ and $U(\Fg)$, such that $W$ is acting faithfully on $\La$, and by Hopf algebra automorphisms on $U(\Fg)$, then $( U(\Fg)\rtimes W, \La)$ is a Hopf Galois order setting.

\begin{proof}
Since $U(\Fg)\rtimes W$ is a cocommutative Hopf algebra, condition (ii) is automatic, by Remark \ref{sec:settings-remark}(2). 
It remains to show that $\La$ faithful as a left module over $\La\# U(\Fg)\rtimes W$.

\noindent \underline{Step 1}: By \cite{BerMon1986}, every nonzero $(\La,\La)$-subbimodule of $\La\# U(\Fg)$ has nonzero intersection with
$\La$. (Since $\La$ is commutative, the extended centroid $C$ is just the field of fractions $L$ of $\La$. We have
 $L\Fg\hookrightarrow\End(L)$ which means that the action of $\Fg$ on $\La$ is $L$-outer.)

\noindent\underline{Step 2}: We show that every nonzero $(\La,\La)$-subbimodule $M$ of $\La\# U(\Fg)\rtimes W$ has nonzero intersection with
$\La W$. It suffices to show this for $M=\La X\La$ where $0\neq X\in \La\# U(\Fg)\rtimes W$ is arbitrary. 
Write $X=X_1 w_1+X_2w_2+\cdots+X_n w_n$ where $0\neq X_i\in\La\# U(\Fg)$.
We proceed by induction on $m=m(X)\defby \#\{i\in\{1,2,\ldots,n\}\mid X_i\notin\La\}$.
If $m=0$ then $\La X\La \cap \La W=\La X\La\neq 0$. If $m>0$, without loss of generality $X_1\notin\La$.
By Step 1, $\La X_1\La \cap \La\neq 0$. Thus 
there exists $\sum_i a_i\otimes b_i\in\La\otimes\La$ such that 
$\tilde X \defby \sum_i a_i X w_1^{-1}(b_i) = 
\big(\sum_i a_i X_1 b_i\big)w_1 + \cdots$ 
has $w_1$-coefficient $\tilde{X}_1=\sum_i a_i X_1 b_i\in\La\setminus\{0\}$.
Moreover, if $X_j\in\La$ then the $w_j$-coefficient of $\tilde X$ is $\tilde{X}_j = 
\sum_i a_i X_j w_jw_1^{-1}(b_j)\in \La$. Thus  $m(\tilde X)\le m(X)-1$
because $\tilde{X}_1\in\La$ while if $X_j\in\La$ then $\tilde X_j\in\La$ too. Now $\La X\La \cap \La W\supset \La \tilde{X}\La \cap \La W\neq 0$ by induction.

\noindent\underline{Step 3}: Let $M=\Ann_{\La\# U(\Fg)\rtimes W}(\La)$ and suppose that $M\neq 0$. Since $M$ is an ideal, it is a $(\La,\La)$-subbimodule. By Step 2, there exists some nonzero $X\in M\cap \La W$. However this contradicts Dedekind's Independence Theorem (which states that $\La W$ acts faithfully on $\La$ when $W$ does).
\end{proof}
\end{Example}

\begin{Example}
\label{ex:example2} 
(Special case of Example \ref{ex:example1}.)
Take $\K=\C$.
Let $G$ be a connected affine algebraic group, $H$ a finite subgroup of $G$ and put $X=H\backslash G$ the set of right cosets of $H$ in $G$. Let $\C[X]$ be the algebra of regular functions on $X$. $G$ acts on $X$ by right multiplication hence from the left on $\C[X]$ by automorphisms. Differentiating this action gives an action of the Lie algebra $\Fg$ of $G$ on $\C[X]$ by derivations. Let $W$ be a group acting by algebra automorphisms on $\C[X]\# U(\Fg)$ preserving $\C[X]$ and $U(\Fg)$, such that $W$ is acting faithfully on $\C[X]$, and by Hopf algebra automorphisms on $U(\Fg)$.
Then $\big(U(\Fg)\rtimes W, \C[X]\big)$ is a Hopf Galois order setting.

\begin{proof}
Since $X$ is a smooth irreducible affine variety, $\C[X]$ is a noetherian integrally closed domain. By Example \ref{ex:example1} it now suffices to show that $\C[X]\Fg \cap \Ann_{\C[X]\# U(\Fg)\rtimes W}(\C[X])=0$. Since $H$ is finite, the orbit map $\alpha:G\to X$ is an immersion at every point. That is, for all $p\in X$, $\mathrm{d}\alpha_p:\Fg\to T_p(X)$, $v\mapsto v_p$ is injective. Let $\{v_i\}$ be a basis for $\Fg$ regarded as left invariant vector fields on $G$. Suppose that $\sum_i f_i v_i=0$ for some $f_i\in\C[X]$ not all of which are identically zero. Then at $p$ we have $\sum_i f_i(p)(v_i)_p=0$ which contradicts that $\{(v_i)_p\}$ is a basis for the tangent space $T_p(X)$. 
\end{proof}
\end{Example}

\begin{Example}[Special case of Example \ref{ex:example2}]
Let $G=\GL(n,\C)$ and $\C[G]=\C[x_{ij}\mid 1\le i,j\le n][\det^{-1}]$ the algebra of regular functions. Let $H=S_n$ be the subgroup of all permutation matrices, acting by left multiplication on $G$. Let $X=S_n\backslash G$ be the set of orbits (right cosets of $S_n$ in $G$). Then $S_n$ acts from the right on $\C[G]$ by automorphisms. Explicitly, $x_{ij}.\sigma=x_{\si^{-1}(i)j}$ for $\si\in S_n$, and $\C[X]\cong\C[G]^{S_n}$. On the other hand, $G$ acts from the right on $G$ hence from the left on $\C[G]$. Since these actions commute we get an induced left action of $G$ on $\C[X]$. Differentiating this gives a Lie algebra homomorphism
$\Fgl_n\to \op{Der}(\C[X])$ given by $E_{ij}\mapsto \sum_{k=1}^n x_{ki}\frac{\partial}{\partial x_{kj}}$. Finally, let $W=S_n$, acting from the right on $G$ by conjugation, hence on $\C[X]$ from the left. Explicitly,  $\sigma.x_{ij}=x_{\si(i)\si(j)}$ for $\si\in S_n$. This induces a left action of $S_n$ on $U(\Fgl_n)$ by $\si.E_{ij}=E_{\si(i)\si(j)}$. Then $\big(U(\Fgl_n)\rtimes S_n,\, \C[S_n\backslash GL_n]\big)$ is a Hopf Galois order setting.
\end{Example}

\begin{Example}[Rational-differential setting, special case of Example \ref{ex:example2}]
\label{ex:rational-differential}
Let $V$ be a finite-dimensional complex vector space and let $G=V$ be the additive group of $V$ acting on itself by left translation. Let $W$ be a subgroup of $\GL(V)$ acting naturally on the symmetric algebra $S(V)$ and on $\C[V]$. Then $\big(S(V)\rtimes W, \C[V]\big)$ is a Hopf Galois order setting.
\end{Example}

\begin{Example}[Trigonometric-differential setting, special case of Example \ref{ex:example2}]
Let $G=T\cong (\C^\times)^n$ be a complex torus acting on itself by left multiplication and $\C[T]\cong\C[z_1^{\pm 1},z_2^{\pm 1},\ldots,z_n^{\pm 1}]$ be the algebra of regular functions on $T$. Let $\Ft\cong \C^n$ be the (abelian) Lie algebra of $T$, $U(\Ft)\cong \C[z_1\frac{\partial}{\partial z_1}, z_2\frac{\partial}{\partial z_2},\ldots, z_n\frac{\partial}{\partial z_n}]$ be the universal enveloping algebra. Let $W$ be a group acting by algebraic group automorphisms on $T$ and induced actions on $\Ft$, $U(\Ft)$ and $\C[T]$. Then $\big(U(\Ft)\rtimes W, \C[T]\big)$ is a Hopf Galois order setting.
\end{Example}

\begin{Example}[Quantum Borel setting]\label{ex:quantum-Borel}
Suppose $q$ is an element of infinite order in the multiplicative group of $\K$.
Let $\tilde\CH$ be the algebra with generators $\{E,K,K^{-1}\}$ and relations
\[ KEK^{-1}=qE,\quad KK^{-1}=K^{-1}K=1.\]
There exists a Hopf algebra structure on $\tilde\CH$ determined by
\[\Delta(K)=K\otimes K,\quad \ep(K)=1,\quad S(K)=K^{-1}\]
\[\Delta(E)=E\otimes 1 + K\otimes E,\quad \ep(E)=0,\quad S(E)=-K^{-1}E.\]
Let $\CH=\K[E]$ be the subalgebra of $\tilde\CH$ generated by $E$. It is a left coideal subalgebra of $\tilde\CH$.
Let $\La=\K[t]$. Define an action of $\tilde\CH$ on $\La$ by
\[ E\tri t=1,\quad K\tri t=q^{-1}t. \]
This is well-defined because 
\[(KEK^{-1}-qE)\tri t = K\tri(E\tri(qt))-q= qK\tri 1 -q =0.\]
Using the coproduct Leibniz rules
\[E\tri(fg)=(E\tri f)g+(K\tri f)(E\tri g)\]
\[K\tri(fg)=(K\tri f)(K\tri g)\]
it is easy to see that
\[ E\tri f(t) = \frac{f(t)-f(q^{-1}t)}{t-q^{-1}t}. \]
Thus $(t-q^{-1}t)E-(1-K)\in\Ann_{\La\#\tilde\CH}\La$ so $\La\#\tilde\CH$ does not act faithfully on $\La$.
However, the algebra $\La\#\CH$ does act faithfully on $\La$. To see this, first observe that the smash product $\La\#\CH$ is isomorphic to the algebra generated by
$t$ and $E$ subject to $Et=(E_{(1)}\tri t)E_{(2)}$ or, more explicitly,
\[Et=1+q^{-1}tE.\]
Thus $\La\#\CH$ is isomorphic to a quantum Weyl algebra.
Inductively, $E^nt - q^{-n} tE^n$ is a nonzero multiple of $E^{n-1}$ for any $n\ge 0$.
This implies that every nonzero $(\La,\La)$-subbimodule of $\La\#\CH$ intersects $\La$ nontrivially.
Since $\La$ acts faithfully on itself ($1_\La\in\La$), $\La\cap\Ann_{\La\#\CH}(\La)=0$. Since 
$\Ann_{\La\#\CH}(\La)$ is an ideal, it is a $(\La,\La)$-subbimodule, hence $\Ann_{\La\#\CH}(\La)=0$.
So $\La\#\CH$ acts faithfully on $\La$.

Note, though, that $\K(t)\#\K[E]\cong \K(t)\#\K[K]$ as $\K(t)$-rings, via $E\mapsto \frac{1}{t-q^{-1}t}(1-K)$.
Thus instead of working with the left coideal subalgebra $\K[E]$ one could work with the monoid bialgebra $\K[K]$. An advantage of this is that the theory of Galois orders as originally developed in \cite{FutOvs2010} applies. On the other hand, the coradical filtration on $U_q(\Fb)$ gives rise to a filtration on $\K[E]$ (such that $\K[E]_n=\K 1+\K E+\cdots+ \K E^n$). This structure is not ``naturally'' available on $\K[K]$ from the perspective of Hopf algebras.
\end{Example}

\section{Hopf Galois orders}\label{sec:Hopf-Galois-orders}

Let $(\CH,\La)$ be a Hopf Galois order setting.

\begin{Definition}
\label{def:Galois-order}
An \emph{$\CH$-Galois $\La$-order} is a subalgebra $F\subset L\#\CH$ satisfying
\begin{enumerate}[{\rm (i)}]
\item $\La\subset F$,
\item $LF=L\#\CH$,
\item $\hat X(\La)\subset \La$ for all $X\in F$.
\end{enumerate}
\end{Definition}

\begin{Remark}\phantom{X}
\begin{enumerate}
\item The antipode $S:\tilde\CH\to\tilde\CH^{\mathrm{cop,op}}$ can be extended to an algebra isomorphism
\begin{equation}
S:L\#\tilde\CH \to (L\#\tilde\CH^{\mathrm{cop}})^{\mathrm{op}}
\end{equation}
which is the identity map when restricted to $L$. This follows from the following identity in $L\#\tilde\CH$:
\begin{equation}\label{eq:aSx}
aS(x)=S(x_{(2)})(x_{(1)}\tri a),\quad\forall a\in L, x\in\tilde\CH.
\end{equation}
\end{enumerate}
\end{Remark}

\begin{Example}
The \emph{standard $\CH$-Galois $\La$-order} is defined by
\begin{equation}
\CF(\CH,\La) = \{X\in L\#\CH\mid \hat X(\La)\subset\La\}.
\end{equation}
It is easy to check that $L \CF(\La,\CH)=L\#\CH$, hence $\CF(\CH,\La)$ is an $\CH$-Galois $\La$-order.
\end{Example}

\begin{Example} \label{ex:sub-galois}
If $F$ is an $\CH$-Galois $\La$-order and $\CH'$ is a left coideal subalgebra of $\tilde\CH$ contained in $\CH$, then $F(\CH')\defby F\cap L\CH'$ is an $\CH'$-Galois $\La$-order.

\begin{proof}
As mentioned in Example \ref{ex:sub}, if $(\CH,\La)$ is a setting then so is $(\CH',\La)$. Put $F'=F(\CH')$. Since $1_{\tilde\CH}\in\CH'$ we have $\La\subset F'$. Since $F'\subset F$ we have $\hat X(\La)\subset\La$ for all $X\in F'$. It remains to show that $LF'=L\CH'$. It suffices to show $\CH'\subset LF'$. Let $x\in\CH'$ be arbitrary. Since $\CH'\subset \CH\subset LF$ there exists $a\in\La\setminus\{0\}$ such that $ax\in F$.  On the other hand $ax\in L\CH'$. Thus $ax\in F'$ by definition of $F'$. Therefore $x\in a^{-1}F'\subset LF'$. 
\end{proof}
\end{Example}

\begin{Example}
In the rational-differential setting (Example \ref{ex:rational-differential}), the standard $S(V)\rtimes W$-Galois $\C[V]$-order $\CF(S(V)\rtimes W,\C[V])$ is the nilHecke algebra of $(V,W)$.
\end{Example}

\begin{Example}
In the quantum Borel setting $(\C[E]\subset U_q(\Fb), \C[t])$ from Example \ref{ex:quantum-Borel}, let $p(t)\in\C[t]$ be any nonzero polynomial and define $F(p)$ to be the subalgebra of $\C[t]\#\C[E]$ generated by $t$ and $p(t)E$.
Then $F(p)$ is a $\C[E]$-Galois $\C[t]$-order.
\end{Example}

\begin{Example}
(Principal flag orders \cite{Web2019})
Let $\La$ be a noetherian integrally closed domain over $\K$ with field of fractions $L$,
$W$ a finite group acting faithfully on $\La$,
$\mathscr{M}$ a submonoid of $\Aut(\La)$ that is normalized by $W$,
and $\mathcal{F}$ be the skew monoid ring $L\rtimes (\mathscr{M}\rtimes W)$, which acts naturally on $L$.
A subring $F$ of $\mathcal{F}$ containing $\La\# W$ is a \emph{principal flag order} if 
$LF=\mathcal{F}$ and $X(\La)\subset\La$ for all $X\in F$. We may regard the monoid algebra $\K\mathscr{M}$ as a coideal subalgebra of the group algebra $\K\Aut_\K(L)$. Thus any principal flag order $F$ is a $\CH$-Galois $\La$-order with $\CH=(\K \mathscr{M})\rtimes W$ being a coideal subalgebra of the Hopf algebra $\tilde\CH=\K\Aut_\K(L)\rtimes W$.
\end{Example}

The following example was suggested to the author by Sam Evens.

\begin{Example}[Hecke algebras of symmetrizable Cartan data]
\label{ex:GKV}
We recall the construction from \cite{GinKapVas1997}.
Let $(A,\Pi,\Pi^\vee,P,P^\vee)$ be a symmetrizable Cartan datum: $A=(a_{ij})_{i,j=1}^r$ is a symmetrizable generalized Cartan matrix, $P$ is a free abelian group of rank $n=2r-\op{rank}A$, $P^\vee=\Hom_\Z(P,\Z)$, $\Pi=\{\al_i\}_{i=1}^r\subset P$ and $\Pi^\vee=\{\al_i^\vee\}_{i=1}^r\subset P^\vee$ are $\Z$-linearly independent subsets such that $\langle \al_i^\vee,\al_j\rangle=a_{ij}$.
Let $\mathscr{C}$ be the additive group of complex numbers $\C$, or the multiplicative group of nonzero complex numbers $\C^\ast$.
Put $T=\mathscr{C}\otimes_\Z P^\vee$. It is an abelian affine algebraic group.
Thus we have either $T=\C\otimes_\Z P^\vee \cong \C^n$ (Cartan subalgebra of Kac-Moody algebra) or
$T=\C^\ast\otimes_\Z P^\vee\cong (\C^\ast)^n$ (torus of Kac-Moody group).
Let $\CH=\C W$, the group algebra of $W$, and let $\La$ be the algebra of regular functions on $T$, $\La=\C[T]$. Then $L=\C(T)$, the field of rational functions on $T$.
The (possibly infinite) Weyl group $W$ acts on $P^\vee$, hence on $T$, hence on $\C[T]$.
Let $q\in \mathscr{C}$.
Let $\mathbf{H}_q$ denote the subalgebra of $\C(T)\#\C W$ generated by $\C[T]$ and the 
\emph{Lusztig-Demazure} operators $\si_i\in L\#\CH$ given by
\begin{equation}
\si_i = q\frac{t^{\al_i}s_i-1}{t^{\al_i}-1}-q^{-1}\frac{s_i-1}{t^{\al_i}-1},
\quad i=1,2,\ldots,r.
\end{equation}

\begin{Theorem}[Ginzburg-Kapranov-Vasserot {\cite{GinKapVas1997}}]
The algebras $\mathbf{H}_q$ have the following properties:
\begin{enumerate}[{\rm (i)}]
\item If $\mathscr{C}=\C^\ast$ (respectively $\mathscr{C}=\C$) and $A$ is of finite type, then $\mathbf{H}_q$ is isomorphic to the affine (respectively degenerate affine) Iwahori-Hecke algebra associated to the transpose of $A$;
\item If $\mathscr{C}=\C^\ast$ (respectively $\mathscr{C}=\C$) and $A$ is of affine type, then $\mathbf{H}_q$ is isomorphic to the double affine (respectively degenerate double affine) Hecke algebra;
\item $\mathbf{H}_q$ is free as a left $\C[T]$-module.
\end{enumerate}
\end{Theorem}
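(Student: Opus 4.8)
The plan is to exhibit $\mathbf{H}_q$ as the asserted Hecke algebra by first constructing a surjection onto it from the abstract algebra given by generators and relations, then showing this surjection is injective via a common PBW-type basis; part (iii) will drop out of the same argument. Write $\mathbb{H}$ for the algebra defined, in its Bernstein-type presentation, by a copy of $\C[T]$ together with generators $T_1,\dots,T_r$ subject to: (a) the quadratic relation $(T_i-q)(T_i+q^{-1})=0$ when $\mathscr{C}=\C^\ast$, or its additive degeneration when $\mathscr{C}=\C$; (b) the braid relations of $W$; and (c) the Lusztig--Bernstein relations expressing $T_i f-{}^{s_i}f\,T_i$, for $f\in\C[T]$, as a $\C[T]$-multiple (no $T_i$) of the Demazure divided difference of $f$. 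For $A$ of finite type this $\mathbb{H}$ is precisely the (degenerate) affine Hecke algebra of $A^{\mathrm{t}}$, and for $A$ of affine type it is the (degenerate) double affine Hecke algebra; this identification is just the presentation side of the classical structure theory of those algebras, together with a bookkeeping of conventions (the transpose $A^{\mathrm{t}}$, and the rank-$n$ lattice built from $P^\vee$).

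First I would verify that $\La=\C[T]$ and the Lusztig--Demazure operators $\sigma_1,\dots,\sigma_r$ of the statement satisfy relations (a)--(c) inside $L\#\C W$. Relations (a) and (c) are short direct computations from the explicit formula for $\sigma_i$ together with $\Delta(s_i)=s_i\otimes s_i$; indeed $\widehat{\sigma_i}(1)=q$, which forces the eigenvalues $q,-q^{-1}$ and hence (a), while the point that makes (c) land in $\C[T]$ rather than merely in $L$ is that $f\mapsto(f-{}^{s_i}f)/(t^{\alpha_i}-1)$ maps $\C[T]$ into $\C[T]$. The braid relations are the one genuinely delicate check: a braid relation between $\sigma_i$ and $\sigma_j$ involves only the rank-two Cartan submatrix on $\{i,j\}$, so it reduces to the finite list of rank-two finite types $A_1\times A_1$, $A_2$, $B_2$, $G_2$ (there is nothing to check when $s_is_j$ has infinite order), and I would carry these out by tracking the expansion of $\sigma_{i_1}\cdots\sigma_{i_k}$ in $\C(T)\#\C W=\bigoplus_{u\in W}\C(T)\,u$ and comparing the two sides component by component. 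Granting (a)--(c), the universal property of $\mathbb{H}$ gives a surjection $\pi\colon\mathbb{H}\to\mathbf{H}_q$ with $T_i\mapsto\sigma_i$ and $\C[T]$ mapped identically.

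Next I would deduce injectivity of $\pi$ and part (iii) together from a triangularity argument in $\C(T)\#\C W$. Since $\sigma_i=\tfrac{q^{-1}-q}{t^{\alpha_i}-1}\cdot 1+\tfrac{qt^{\alpha_i}-q^{-1}}{t^{\alpha_i}-1}\cdot s_i$ and the $s_i$-coefficient is a nonzero element of $\C(T)$, an induction on $\ell$ using the subword characterization of the Bruhat order shows, for any reduced word $w=s_{i_1}\cdots s_{i_\ell}$,
\[
\sigma_w:=\sigma_{i_1}\cdots\sigma_{i_\ell}\ \in\ c_w\,w+\bigoplus_{u<w}\C(T)\,u\qquad\text{for some }0\neq c_w\in\C(T),
\]
so $\sigma_w$ depends only on $w$. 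Fixing a $\C$-basis $\{b_\lambda\}$ of $\C[T]$, the family $\{b_\lambda\sigma_w\}$ is $\C$-linearly independent in $\bigoplus_{u\in W}\C(T)\,u$ (a vanishing combination has a nonzero component along a maximal $w$ occurring in it, forcing a $\C$-linear dependence among the $b_\lambda$). Conversely, relations (a)--(c) let one normal-order: (c) pushes $\C[T]$-factors to the left at the cost of $\sigma$-shorter terms, and (b) together with $\sigma_i^2=(q-q^{-1})\sigma_i+1$ reduces any monomial in the $\sigma_i$ to a single $\sigma_w$, so $\{b_\lambda\sigma_w\}$ also spans $\mathbf{H}_q$ over $\C$. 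Hence $\mathbf{H}_q=\bigoplus_{w\in W}\C[T]\,\sigma_w$ is free as a left $\C[T]$-module, which is (iii); and since $\mathbb{H}$ carries the parallel basis $\{b_\lambda T_w\}$ and $\pi$ sends basis to basis, $\pi$ is an isomorphism, giving (i) and (ii).

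The main obstacle is the braid-relation verification for the Lusztig--Demazure operators: although it reduces to rank two, it is an intricate rational-function identity, and the cleanest route is the Bruhat-triangular bookkeeping above rather than brute force. A secondary nuisance is the degenerate case $\mathscr{C}=\C$, where $t^{\alpha_i}$ is not literally available and must be replaced by the linear coordinate $\alpha_i\in\C[T]$, with (a)--(c) rederived using the additive divided difference $f\mapsto(f-{}^{s_i}f)/\alpha_i$ (alternatively one obtains this case as a Rees-algebra degeneration of the torus case); and, finally, reconciling the resulting presentation — in particular the appearance of $A^{\mathrm{t}}$ and the role of the lattice $P$ — with the standard presentations of the (degenerate) double affine Hecke algebra is a convention-matching exercise rather than a conceptual difficulty.
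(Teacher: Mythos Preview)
The paper does not prove this theorem. It is stated as a result of Ginzburg--Kapranov--Vasserot and cited to \cite{GinKapVas1997}; the only thing the paper adds after the statement is the one-line observation that the $\sigma_i$ preserve $\C[T]$, hence $\mathbf{H}_q$ is a $\C W$-Galois $\C[T]$-order. So there is no ``paper's own proof'' to compare your proposal against.

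That said, your outline is essentially the standard argument (and is close in spirit to what \cite{GinKapVas1997} actually do): build a map from the abstract Bernstein-presented Hecke algebra by checking the quadratic, Bernstein--Lusztig, and braid relations for the Lusztig--Demazure operators, then prove bijectivity via Bruhat-triangularity of $\sigma_w$ in $\C(T)\#\C W$, obtaining the PBW basis $\{\sigma_w\}_{w\in W}$ over $\C[T]$ and hence (iii) simultaneously. Your identification of the braid relations as the only nontrivial verification, and the reduction to rank two, is correct. One small caution: in the Kac--Moody generality of the statement (arbitrary symmetrizable $A$, possibly infinite $W$), the reduction of arbitrary $\sigma$-words to a single $\sigma_w$ uses Matsumoto's theorem for Coxeter groups together with the braid relations you checked, not just a finite computation; and in the affine case the matching with the standard presentation of the double affine Hecke algebra (the ``convention-matching exercise'' you allude to) is where most of the actual content of \cite{GinKapVas1997} lies, so it is a bit more than bookkeeping.
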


It is well-known (and easy to check) that the $\si_i$ preserve $\C[T]$. Therefore, $\mathbf{H}_q$ are examples of $\C W$-Galois $\La$-orders.
\end{Example}

\begin{Example}[Rational Cherednik algebras]
The rational Cherednik algebra associated to the complex reflection group $G(\ell,p,n)$ is a principal flag order \cite{LePWeb2019}. We note here that the rational Cherednik algebra (at $t\neq 0$) associated to any finite group may be regarded as Hopf Galois order in the rational-differential setting.

Let $V$ be a finite-dimensional complex vector space, $G$ a finite subgroup of $\GL(V)$, and $S=\{g\in G\mid \codim\ker(g-1)=1\}$ the set of complex reflections in $G$, $t\in\C\setminus\{0\}$,  and $c:S\to\C$ a $G$-invariant function. Let $\mathcal{D}(V)_\mathrm{r}$ be the algebra of differential operators on $V$ with rational function coefficients. Equivalently, $\mathcal{D}(V)_\mathrm{r}$ can be identified with the smash product $\C(V)\# S(V)$, where $\C(V)$ is the field of fractions of the algebra $\C[V]$ of polynomial functions on $V$ and $S(V)$ is the commutative Hopf algebra of constant coefficient differential operators on $V$. 
As mentioned in \cite[Section 2.6]{Eti2017}, using the Dunkl-Opdam representation \cite{DunOpd2003}, the rational Cherednik algebra $H_{t,c}(V,G)$ is isomorphic to the subalgebra of $\C(V)\# S(V)\rtimes G$ generated by $\C[V]\# \C G$ and $\{D_y\mid y\in V\}$ where $D_y$ are the Dunkl-Opdam operators
\begin{equation}
D_y=t\frac{\partial}{\partial y}+\sum_{s\in S}\frac{2c_s}{1-\la_s} \frac{(\al_s,y)}{\al_s}(s-1),
\end{equation}
where $\al_s\in V^\ast$ is a nonzero linear functional vanishing on the fixed hyperplane of $s$, $\al_s^\vee\in V$ the element vanishing on the fixed hyperplane of $s$ in $V^\ast$ such that $(\al_s,\al_s^\vee)=2$,
$\la_s$ is the nontrivial eigenvalue for $s$ on $V^\ast$.

We note here that $H_{c,t}(V,G)$ is an $\big(S(V)\rtimes G\big)$-Galois $\C[V]$-order. First, we saw in Example \ref{ex:rational-differential} that $\big(S(V)\rtimes G, \C[V]\big)$ is a setting.
We have $\C[V]\subset H_{c,t}(V,G)$ by definition.
As is well-known, since $\al_s$ vanishes on $\ker(s-1)$, 
\begin{equation}
D_y(f)\in\C[V],\qquad \forall f\in \C[V], y\in V.
\end{equation}
Therefore $X(\C[V])\subset\C[V]$ for all $X\in H_{c,t}(V,G)$.
Lastly, since $t\neq 0$ and $\C[V]\#\C G\subset H_{c,t}(V,G)$, we have $\partial/\partial y \in \C(V) H_{c,t}(V,G)$, hence $\C(V)H_{c,t}(V,G)=\C(V)\#S(V)\rtimes G$.
\end{Example}

We continue by stating and proving some useful Lemmas. For the rest of this section we assume that $(\CH,\La)$ is a setting and that $F$ is an $\CH$-Galois $\La$-order, and $L$ denotes the fraction field of $\La$.

\begin{Lemma}
\label{lem:direct-sum}
Let $F_-=\{X\in F\mid \hat X(1_\La)=0\}$. Then $F_-$ is a left $\La$-submodule of $F$ and
\begin{equation}
F=\La\oplus F_-.
\end{equation}
\end{Lemma}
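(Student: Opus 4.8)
The plan is to exhibit an explicit $\La$-linear projection $F \to \La$ whose kernel is exactly $F_-$, so that the short exact sequence $0 \to \La \to F \to F/\La \to 0$ of left $\La$-modules splits. The candidate map is $\pi : F \to \La$, $X \mapsto \hat X(1_\La)$. By Definition \ref{def:Galois-order}(iii), $\hat X(\La) \subset \La$, so in particular $\hat X(1_\La) \in \La$, and the map is well-defined with target $\La$. That $\pi$ is left $\La$-linear is immediate from the definition of $\hat{\phantom{X}}$: for $a \in \La$ and $X \in F$ we have $\widehat{aX} = \hat a \hat X$, so $\widehat{aX}(1_\La) = a \cdot \hat X(1_\La)$. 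Hence $F_- = \ker\pi$ is automatically a left $\La$-submodule, which is the first assertion.

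Next I would verify that $\pi$ restricts to the identity on $\La \subset F$. For $a \in \La$, the operator $\hat a$ is left multiplication by $a$ on $L$, so $\hat a(1_\La) = a \cdot 1_\La = a$; thus $\pi|_\La = \Id_\La$. This immediately gives both that $\La \cap F_- = 0$ (if $a \in \La$ lies in $F_-$ then $a = \pi(a) = 0$) and that $\pi$ is surjective. It remains to check $F = \La + F_-$: given any $X \in F$, set $a = \pi(X) = \hat X(1_\La) \in \La \subset F$; then $X - a \in F$ and $\pi(X - a) = \pi(X) - a = 0$, so $X - a \in F_-$ and $X = a + (X-a) \in \La + F_-$. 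Combined with $\La \cap F_- = 0$ this yields the internal direct sum decomposition $F = \La \oplus F_-$.

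There is essentially no obstacle here; the lemma is, as the authors note, trivial to prove. The only point that requires a moment's care is confirming that $\pi$ genuinely lands in $\La$ rather than merely in $L$ — but this is precisely condition (iii) of Definition \ref{def:Galois-order}, which is built into the definition of an $\CH$-Galois $\La$-order. One should also note that the decomposition is only as left $\La$-modules, not as algebras or as bimodules, since $F_-$ need not be a right $\La$-submodule nor a subalgebra. I would present the argument in the order: (1) $\pi$ is well-defined and left $\La$-linear, whence $F_- = \ker\pi$ is a left $\La$-submodule; (2) $\pi|_\La = \Id_\La$, giving $\La \cap F_- = 0$; (3) the splitting $X = \pi(X) + (X - \pi(X))$ gives $F = \La + F_-$; conclude $F = \La \oplus F_-$.
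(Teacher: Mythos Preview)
Your proof is correct and follows essentially the same approach as the paper: both use the map $X\mapsto \hat X(1_\La)$ (your $\pi$) as a left $\La$-linear retraction onto $\La$, decomposing $X=\hat X(1_\La)+\big(X-\hat X(1_\La)\big)$ and checking $\La\cap F_-=0$ via $\pi|_\La=\Id_\La$. Your write-up is in fact slightly more explicit in verifying that $\pi$ lands in $\La$ and that $F_-$ is a left $\La$-submodule, but the argument is the same.
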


\begin{proof}
If $X\in F$, put $X_-=X-\hat X(1_\La)$. Then $\hat X_-(1_\La)=\hat X(1_\La)-\hat X(1_\La)1_\La=0$ so $X_-\in F_-$. Clearly $X=\hat X(1_\La) + (X-\hat X(1_\La))$ which shows that $F=\La+F_-$. Suppose $Y\in \La\cap F_-$. Then $Y=\hat Y(1_\La)$ since $Y\in\La$. On the other hand,  $\hat Y(1_\La)=0$ since $Y\in F_-$. Therefore $Y=0$.
\end{proof}

\begin{Lemma} \label{lem:maxcomm}
$\La$ is a maximal commutative subalgebra of $F$.
\end{Lemma}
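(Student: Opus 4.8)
The plan is to show that any element of $F$ commuting with all of $\La$ already lies in $\La$. So let $X\in F$ satisfy $Xa = aX$ in $L\#\CH$ for every $a\in\La$; we must prove $X\in\La$. Using Lemma \ref{lem:direct-sum}, write $X = \hat X(1_\La) + X_-$ with $\hat X(1_\La)\in\La$ and $X_-\in F_-$. Since $\La$ is commutative, $\hat X(1_\La)$ commutes with $\La$, hence so does $X_-$; thus it suffices to prove that $X_-\in\La$, i.e. that $X_- = 0$. Renaming, we may assume from the start that $X\in F$ commutes with $\La$ and $\hat X(1_\La)=0$, and aim to conclude $X=0$.

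The key computation is to evaluate the operator $\hat X$ on an arbitrary $a\in\La$ using the commutation relation. First I would record the identity $\hat X(a) = \widehat{X}\widehat{a}(1_L) = \widehat{Xa}(1_L)$ from the Notation section. Since $Xa = aX$, we get $\widehat{Xa}(1_L) = \widehat{aX}(1_L) = \widehat{a}\,\widehat{X}(1_L) = a\cdot \hat X(1_\La) = a\cdot 0 = 0$. Therefore $\hat X(a) = 0$ for all $a\in\La$, which says precisely that $X$ annihilates $\La$ as an element of $L\#\CH$, i.e. $X\in\Ann_{L\#\CH}(\La)$. Now condition (iii) in Definition \ref{def:setting}, combined with Remark \ref{sec:settings-remark}(3), tells us $L$ is faithful as a left $L\#\CH$-module, so $\Ann_{L\#\CH}(\La)$ — which contains, hence equals, $\Ann_{L\#\CH}(L)$ once we know the $\La$-span of $\La$ is $L$ — is zero. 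More carefully: if $\hat X(a)=0$ for all $a\in\La$, then for any $b/c\in L$ with $b,c\in\La$, $c\neq 0$, we have $0 = \hat X(b) = \widehat{Xc\cdot c^{-1}b}(1_L)$... the cleanest route is to note $Xc = cX$ again gives $\hat X$ kills $cL\ni b$ hence kills $b/c$, so $\hat X=0$ on $L$, whence $X=0$ by faithfulness. This completes the argument.

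The main obstacle, such as it is, is purely bookkeeping: making sure the passage from "$\hat X$ vanishes on $\La$" to "$\hat X$ vanishes on all of $L$" is done correctly, since the faithfulness hypothesis is literally about $L$ (or equivalently about $\La$, by Remark \ref{sec:settings-remark}(3), which is exactly the statement that these two annihilators coincide). Using that the commutation relation $Xa=aX$ holds for \emph{all} $a\in\La$, and that every element of $L$ is $a^{-1}b$ with $a,b\in\La$, one extracts $\hat X(a^{-1}b) = a^{-1}\hat X(b) = 0$ directly, so no separate appeal to the coincidence of annihilators is even needed. I do not anticipate any genuine difficulty here; the lemma is essentially a restatement of the faithfulness axiom once one observes that "commutes with $\La$ and kills $1_\La$" forces "kills $\La$".

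For completeness of the write-up I would also remark that the containment $\La\subset F$ is part of Definition \ref{def:Galois-order}(i) and that $\La$ is commutative by Definition \ref{def:setting}(i), so $\La$ is indeed a commutative subalgebra of $F$; the content of the lemma is its maximality among commutative subalgebras, which follows from the claim that its centralizer in $F$ is $\La$ itself, proved above.
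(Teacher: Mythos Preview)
Your proposal is correct and follows essentially the same route as the paper: reduce via Lemma \ref{lem:direct-sum} to the case $\hat X(1_\La)=0$, compute $\hat X(a)=\widehat{Xa}(1_\La)=\widehat{aX}(1_\La)=a\,\hat X(1_\La)=0$ for all $a\in\La$, and conclude by faithfulness. The paper simply invokes ``the action of $F$ on $\La$ is faithful'' at the last step, whereas you take the extra care of extending to $L$ via the commutation $Xc^{-1}=c^{-1}X$; either route is fine, and your caution about which faithfulness statement is literally being used is well placed but does not change the argument.
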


\begin{proof}
Let $X\in F$ be such that $Xa=aX$ for all $a\in\La$. By Lemma \ref{lem:direct-sum} it suffices to show that $X_-=0$, where $X_-=X-\hat X(1_\La)$. For all $a\in \La$ we have
\[\hat{X_-}(a) = \hat X(a)-\hat X(1_\La)\cdot a = (\hat X\hat a)(1_\La)-(\hat a\hat X)(1_\La) =(\hat X\hat a-\hat a\hat X)(1_\La)=0.\]
Since the action of $F$ on $\La$ is faithful, we conclude that $X_-=0$.
\end{proof}

\begin{Lemma} \label{lem:center}
The center of $F$ equals
\begin{equation}\label{eq:center}
Z(F)=\{a\in\La\mid \forall x\in\CH:\, xa=ax\}=\{a\in\La\mid \forall x\in\CH:\,(x_{(1)}\tri a) x_{(2)}=ax\}.
\end{equation}
In particular, when $\CH$ is a Hopf algebra,
\begin{equation}\label{eq:center2}
Z(F)=\La^\CH\defby\{a\in\La\mid \forall x\in\CH:\, x\tri a=\ep(x)a\}.
\end{equation}
\end{Lemma}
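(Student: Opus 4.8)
The plan is to establish the three displayed descriptions of $Z(F)$ by a chain of inclusions, relying crucially on the two lemmas just proved. First I would show $Z(F)\subseteq\La$. If $X\in Z(F)$, then in particular $Xa=aX$ for all $a\in\La$ (since $\La\subseteq F$ by Definition \ref{def:Galois-order}(i)), so $X$ centralizes $\La$; by Lemma \ref{lem:maxcomm}, $\La$ is a maximal commutative subalgebra of $F$, hence $X\in\La$. This is the one genuinely substantive step, and it is immediate once Lemma \ref{lem:maxcomm} is in hand.

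Next, for $a\in\La$ I would identify when $a$ is central. Since $\La\subseteq F$ and $\CH\subseteq L\#\CH$ is generated over $\La$ together with $\CH$ in the sense that $LF=L\#\CH$ — more to the point, since $F$ is generated as an algebra by $\La$ and enough elements $X\in F$ — the element $a\in\La$ is central in $F$ if and only if it commutes with $\La$ (automatic, as $\La$ is commutative) and with every element of $F$. To reduce "commutes with every element of $F$" to "commutes with every $x\in\CH$", I would argue as follows: any $X\in F\subseteq L\#\CH$ can be written $X=\sum_i b_i x_i$ with $b_i\in L$ and $x_i\in\CH$ (using (\#2)); if $a$ commutes with each $x_i$ then $aX=\sum_i a b_i x_i=\sum_i b_i a x_i=\sum_i b_i x_i a=Xa$, since $a$, $b_i\in L$ commute. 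Conversely, I must show that if $a\in\La$ is central in $F$ then $xa=ax$ for every $x\in\CH$ — here I would use that $\CH\subseteq LF$ (condition (ii)), so for each $x\in\CH$ there is $c\in\La\setminus\{0\}$ with $cx\in F$; then $a(cx)=(cx)a$ gives $cax=cxa$, and cancelling $c$ in the domain-extension $L\#\CH$ (which is free as a left $L$-module) yields $ax=xa$. This establishes the first equality in \eqref{eq:center}.

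The second equality in \eqref{eq:center} is then just the cross relation \eqref{eq:smash}: $xa=(x_{(1)}\tri a)x_{(2)}$ for $x\in\CH$, $a\in\La$, so the condition $xa=ax$ becomes $(x_{(1)}\tri a)x_{(2)}=ax$ verbatim. Finally, \eqref{eq:center2}: when $\CH=\tilde\CH$ is a Hopf algebra, I would show $(x_{(1)}\tri a)x_{(2)}=ax$ for all $x$ is equivalent to $x\tri a=\ep(x)a$ for all $x$. One direction: if $x\tri a=\ep(x)a$ always, then $(x_{(1)}\tri a)x_{(2)}=\ep(x_{(1)})a\,x_{(2)}=a\,x$ using the counit axiom. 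The other direction: assuming $(x_{(1)}\tri a)x_{(2)}=ax$ for all $x$, apply the antipode-twisted multiplication — multiply on the right by $S(x_{(3)})$ (replacing $x$ by $x_{(1)}\otimes x_{(2)}$ appropriately) and use $x_{(2)}S(x_{(3)})=\ep(x_{(2)})1$ to get $(x_{(1)}\tri a)\ep(x_{(2)}) = a x_{(1)}S(x_{(2)})$... more cleanly: from $(x_{(1)}\tri a)x_{(2)}=ax$, apply $\Id\otimes\varepsilon$ after writing both sides in $\La\otimes\CH$ via (\#2), which extracts $x\tri a = \varepsilon(x)a$ directly (the left side has $\CH$-component $x_{(2)}$, whose counit gives back $x_{(1)}\tri a$ summed to $x\tri a$; the right side $ax$ has $\CH$-component $x$, counit $\varepsilon(x)$, coefficient $a$).

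The main obstacle I anticipate is the careful bookkeeping in the last step — matching up the Sweedler-notation manipulation with the basis decomposition $\La\otimes\CH\cong\La\#\CH$ so that "apply $\mathrm{id}\otimes\varepsilon$" is rigorously justified — together with making sure the cancellation of $c\in\La\setminus\{0\}$ in $L\#\CH$ is legitimate, which it is since $L\#\CH$ is a free left $L$-module and left multiplication by a nonzero element of the field $L$ is injective. Everything else is a routine unwinding of definitions and the cross relation \eqref{eq:smash}.
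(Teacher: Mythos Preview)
Your proposal is correct and follows essentially the same route as the paper: invoke Lemma~\ref{lem:maxcomm} to force $Z(F)\subset\La$, use $LF=L\#\CH$ to reduce centrality in $F$ to commutation with $\CH$, and read off the second description from the smash relation~\eqref{eq:smash}. The only noteworthy difference is in deriving \eqref{eq:center2}: the paper uses the identity $x\tri a=x_{(1)}aS(x_{(2)})$ (so that $xa=ax$ gives $x\tri a=a\,x_{(1)}S(x_{(2)})=\ep(x)a$ directly), whereas you apply $\Id\otimes\ep$ to the equality $(x_{(1)}\tri a)\otimes x_{(2)}=a\otimes x$ in $\La\otimes\CH$; both are one-line manipulations and your version has the mild advantage of not explicitly invoking the antipode for that implication.
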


\begin{proof}
By Lemma \ref{lem:maxcomm}, $Z(F)\subset\La$. Since $F$ is an $\CH$-Galois $\La$-order we have $L F=L\#\CH$ which implies the leftmost equality in \eqref{eq:center}.
The second equality in \eqref{eq:center} follows the smash relation \eqref{eq:smash}.
Using $x\tri a= x_{(1)}a S(x_{(2)})$ for $x\in\CH$ and $a\in\La$, \eqref{eq:center2} follows.
\end{proof}

\begin{Lemma}
\label{lem:bimodule}
The following statements hold.
\begin{enumerate}[{\rm (i)}]
\item $aS(x)=S(x_{(1)})\cdot (x_{(2)}\tri a)$ for any $a\in L$ and $x\in\tilde\CH$.
\item $ax = x_{(2)}\cdot \big(S^{-1}(x_{(1)})\tri a\big)$ for any $a\in L$ and $x\in\tilde\CH$.
\item $\La\CC=\CC\La$ for any left coideal $\CC$ of $\tilde\CH$.
\item $L\CC=\CC L$ for any left coideal $\CC$ of $\tilde\CH$.
\end{enumerate}
\end{Lemma}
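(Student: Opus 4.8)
I would prove the four parts in the order given; (i) is the only one requiring a genuine computation, and the remaining parts follow from it and the cross relation \eqref{eq:smash} by formal manipulation.

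For (i), recall the identity $x\tri a = x_{(1)}\,a\,S(x_{(2)})$ in $L\#\tilde\CH$, valid for all $a\in L$ and $x\in\tilde\CH$; it was used in the proof of Lemma~\ref{lem:center} and is itself a quick consequence of \eqref{eq:smash} together with the antipode axioms. Substituting this expression for $x_{(2)}\tri a$ on the right-hand side of (i), using coassociativity to organize the iterated coproduct, and then collapsing the first two Sweedler factors via $\sum S(x_{(1)})x_{(2)}=\ep(x)1$ leaves precisely $1\cdot a\cdot S(x)=aS(x)$. (This is in substance the identity that makes the extension of $S$ to $L\#\tilde\CH$, discussed after Definition~\ref{def:Galois-order}, well defined.)

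For (ii), I would substitute $S^{-1}(x)$ for $x$ in (i), which is legitimate since $S$ is bijective. As $S$ is a coalgebra anti-endomorphism, so is $S^{-1}$, whence $\Delta\big(S^{-1}(x)\big)=\sum S^{-1}(x_{(2)})\otimes S^{-1}(x_{(1)})$. Feeding this into (i) and cancelling the composites $S\circ S^{-1}$ transforms it into exactly $ax=x_{(2)}\big(S^{-1}(x_{(1)})\tri a\big)$.

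For (iii) and (iv), let $\CC$ be a left coideal of $\tilde\CH$, so that each $x\in\CC$ may be written with $\Delta(x)=\sum x_{(1)}\otimes x_{(2)}$ and all $x_{(2)}\in\CC$. The inclusion $\CC\La\subseteq\La\CC$ (resp.\ $\CC L\subseteq L\CC$) is immediate from \eqref{eq:smash}: $xa=\sum(x_{(1)}\tri a)x_{(2)}$ with $x_{(1)}\tri a\in\La$ (resp.\ $L$)---using that $\La$, and hence $L$ by Definition~\ref{def:setting}(ii), is $\tri$-stable---and $x_{(2)}\in\CC$. The reverse inclusion is the same argument run through (ii): $ax=\sum x_{(2)}\big(S^{-1}(x_{(1)})\tri a\big)$ with $x_{(2)}\in\CC$ and $S^{-1}(x_{(1)})\tri a\in\La$ (resp.\ $L$). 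This yields $\La\CC=\CC\La$ and $L\CC=\CC L$ simultaneously. The only step requiring care is the Sweedler-index bookkeeping in (i) and the standard formula for $\Delta\circ S^{-1}$ used in (ii); there is no conceptual obstacle, and beyond these nothing but the axioms of a Hopf Galois order setting is used.
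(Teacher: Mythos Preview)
Your proposal is correct and follows essentially the same approach as the paper. The only cosmetic difference is in part (i): you compute from the right-hand side using the identity $x\tri a = x_{(1)}\,a\,S(x_{(2)})$, whereas the paper starts from $aS(x)$, inserts $\ep(x_{(1)})$, expands via the antipode axiom to $S(x_{(1)})x_{(2)}aS(x_{(3)})$, and then applies the smash relation---but this is the same chain of equalities read in the opposite direction, and parts (ii)--(iv) are handled identically.
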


\begin{proof}
(i) We have
\begin{align*}
aS(x) &=a S\big(\ep(x_{(1)})x_{(2)}\big) 
 &\text{(counit axiom)}\\
&=\ep(x_{(1)})\cdot a\cdot S(x_{(2)}) 
 &\text{(linearity)}\\
&=S(x_{(1)})\cdot x_{(2)}\cdot a\cdot S(x_{(3)})
 &\text{(antipode axiom)}\\
&=S(x_{(1)})\cdot(x_{(2)}\tri a)\cdot x_{(3)}\cdot S(x_{(4)})
 &\text{(smash relation \eqref{eq:smash})}\\
&=S(x_{(1)})\cdot (x_{(2)}\tri a)\cdot \ep(x_{(3)})
 &\text{(antipode axiom)}\\
&=S(x_{(1)})\cdot (x_{(2)}\tri a)
 &\text{(linearity and counit axiom)}
\end{align*}

\noindent (ii) Put $y=S^{-1}(x)$, apply part (i) with $y$ in place of $x$, and use $(S\otimes S) \circ \Delta=\Delta^{\mathrm{op}}\circ S$.

\noindent (iii) Since $\CC$ is a left coideal of $\tilde\CH$, by \eqref{eq:smash} we have $\CC\La\subset \La\CC$. By part (ii), $\La\CC\subset\CC\La$.

\noindent (iv) Since $L$ is the field of fractions of $\La$, this is immediate from part (iii).
\end{proof}

\begin{Definition}[{\cite{DroFutOvs1994}}]
A commutative subalgebra $B$ of an algebra $A$ is a \emph{Harish-Chandra subalgebra} of $A$ if every finitely generated $(B,B)$-subbimodule of $A$ is finitely generated as a left, and as a right, $B$-module. 
\end{Definition}
 
\begin{Lemma}
$\La$ is a Harish-Chandra subalgebra of $F$.
\end{Lemma}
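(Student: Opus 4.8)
The plan is to show that any finitely generated $(\La,\La)$-subbimodule $M$ of $F$ is finitely generated as a left $\La$-module (the right-module statement follows by the same argument, or by applying the antipode isomorphism $S$ of Lemma~\ref{lem:bimodule}, which restricts to the identity on $L$ and interchanges left and right structures over $\La$). So let $M=\sum_{j=1}^k \La X_j\La$ with $X_j\in F$. Since $F\subset L\#\CH$ and, by Lemma~\ref{lem:bimodule}(iv), $\La\CC=\CC\La$ for any left coideal $\CC$ of $\tilde\CH$, each product $X_j\La$ already lies in a finitely generated left $\La$-submodule of $L\#\CH$: concretely, writing $X_j=\sum_i a_{ji}x_{ji}$ with $a_{ji}\in L$ and $x_{ji}$ spanning some finite-dimensional left coideal $\CC_j\subseteq\CH$, the smash relation \eqref{eq:smash} gives $X_j b\in L\CC_j=\CC_j L$ for all $b\in\La$, so $M\subseteq \La\,\CC\,L$ where $\CC=\sum_j\CC_j$ is a finite-dimensional left coideal. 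Thus $M$ sits inside a left $\La$-submodule of $L\#\CH$ of the form $\bigoplus_{\ell=1}^n \La\cdot c_\ell v_\ell$ for finitely many $c_\ell\in\La$, $v_\ell\in\CC$ — in particular a \emph{finitely generated} left $\La$-module, call it $N$, after clearing denominators.

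The key point then is that a submodule of a finitely generated module over the noetherian ring $\La$ is again finitely generated. First I would reduce to the case where $\CH$ is replaced by a finite-dimensional left coideal $\CC$ with $1\in\CC$: by the above, $M\subseteq \La\#\CC\cdot L$, and since $\La$ is noetherian, the left $\La$-module $\La\#\CC$ (which is free of finite rank, being $\La\otimes\CC$ as a left $\La$-module by property (\#2) of the smash product) is noetherian as a left module over itself — but more relevantly, $N:=\big(\sum_\ell \La c_\ell^{-1}\big)^{-1}$-scaled version of $M$ lives in a finite free left $\La$-module. Explicitly: choose a single nonzero $d\in\La$ with $dX_j\in\La\#\CH$ for all $j$ and with $d\cdot(X_j b)\in\La\#\CC$ for all $b$ in a generating set; then $dM\subseteq\La\#\CC\cong\La^{\oplus\dim\CC}$ as left $\La$-modules. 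Since $\La$ is noetherian, $\La^{\oplus\dim\CC}$ is a noetherian left $\La$-module, so its submodule $dM$ is finitely generated as a left $\La$-module, whence so is $M=d^{-1}(dM)$.

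For the right-module statement, I would invoke Lemma~\ref{lem:bimodule} once more: either run the symmetric argument using part~(ii) (which expresses $ax$ in the form $x_{(2)}\cdot(S^{-1}(x_{(1)})\tri a)$, giving $\CC L=L\CC$ and hence $\La\#\CC$ free of finite rank as a \emph{right} $\La$-module), or observe that the antipode extension $S\colon L\#\tilde\CH\to(L\#\tilde\CH^{\mathrm{cop}})^{\mathrm{op}}$ of the Remark after Definition~\ref{def:Galois-order} carries $M$, which is an $(\La,\La)$-subbimodule of $L\#\CH$, to an $(\La,\La)$-subbimodule of an analogous smash product with the roles of left and right interchanged, so finite generation on one side transfers to the other. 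Either way the argument is the same Noetherian-submodule principle applied after clearing a common denominator.

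The only mild obstacle is bookkeeping: one must make sure that a \emph{single} denominator $d\in\La$ can be chosen so that $dM$ lands in $\La\#\CC$ for a \emph{fixed} finite-dimensional left coideal $\CC$ — this uses that $M$ is finitely generated as a bimodule (so only finitely many $X_j$, hence finitely many coefficients and a finite-dimensional coideal $\CC$ suffice) and that finitely many generators of $M$ as a left $\La$-module, obtained from the $X_j b_i$ for $b_i$ ranging over generators of the relevant right $\La$-modules $X_j\La$, likewise involve only finitely many denominators. Given Lemma~\ref{lem:bimodule}, nothing here is deep; the lemma is essentially the observation that $\La\#\CC$ is finitely generated (indeed free of finite rank) as a left and as a right $\La$-module whenever $\CC$ is a finite-dimensional left coideal, combined with noetherianity of $\La$.
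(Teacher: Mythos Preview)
Your argument is correct, and the underlying idea --- embed $dM$ in the free left $\La$-module $\La\CC\cong\La^{\dim\CC}$ for a suitable finite-dimensional left coideal $\CC$ and common denominator $d$, then invoke noetherianity --- is sound. One wobble: the phrase ``for all $b$ in a generating set'' and the last paragraph's appeal to ``generators of the relevant right $\La$-modules $X_j\La$'' flirt with circularity, since finite generation of $X_j\La$ is part of what is being proved. Fortunately this step is unnecessary: once $dX_j\in\La\CC$, the smash relation (with $\CC$ a left coideal, so $x_{(2)}\in\CC$) gives $(dX_j)b\in\La\CC\La=\La\CC$ for \emph{every} $b\in\La$, and hence $dM=\sum_j\La(dX_j)\La\subset\La\CC$ immediately, without ever choosing generators of $X_j\La$. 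The right-handed case then goes through symmetrically using Lemma~\ref{lem:bimodule}(ii)--(iii) and multiplying by a denominator on the right, as you indicate; the antipode option is shakier since $S(\CH)$ need not lie in $\CH$.

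The paper's proof takes a slightly different and shorter route. Rather than treating a general finitely generated bimodule $M$ all at once, it observes that $\La(xy)\La$ is a quotient of $\La x\La\otimes_\La\La y\La$ and $\La(x+y)\La\subset\La x\La+\La y\La$, so it suffices to check that $\La x\La$ is left- and right-finitely generated for $x$ in the generating set $L\cup\CH$ of $L\#\CH$. For $x\in L$ this is trivial; for $x\in\CH$ the smash relation gives $\La x\La\subset\sum\La x_{(2)}$ directly --- no finite-dimensional coideal and no denominators to clear --- and Lemma~\ref{lem:bimodule}(ii) handles the right side. Your approach trades this reduction-to-generators step for a denominator-clearing step; both land on the same noetherian-submodule principle.
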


\begin{proof}
It suffices to prove that $\La$ is a Harish-Chandra subalgebra of $L\#\CH$. Furthermore, 
since $\La xy \La $ is a homomorphic image of $\La x \La \otimes_\La \La y\La$ and $\La (x+y)\La\subset \La x \La +\La y\La$, it suffices to check $\La x\La$ is finitely generated, on the left and right, for $x$ belonging to a generating set of $L\#\CH$ as a ring. We choose the generating set $L\cup\CH$. For $x\in L$ the statement is trivial since $L$ is commutative.
Let $x\in\CH$. By the smash relation \eqref{eq:smash}, $\La x\La$ is contained in the left $\La$-submodule of $\La\#\CH$ generated by the finitely many elements $x_{(2)}\in\CH$. 
Since $\La$ is noetherian, $\La x\La$ is itself finitely generated as a left $\La$-module.
Similarly, by Lemma \ref{lem:bimodule}(ii), $\La x\La$ is finitely generated as a right $\La$-module.
\end{proof}

\begin{Lemma} \label{lem:FO-det}
Let $\{X_1,\ldots,X_n\}$ be a finite linearly independent (on the left, over $L$) subset of $L\#\CH$. Then there exist $a_1,a_2,\ldots,a_n\in \La$ such that the determinant of the matrix $\big(X_i(a_j)\big)_{i,j=1}^n$ is nonzero.
\end{Lemma}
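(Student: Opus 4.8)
The plan is to prove the statement by induction on $n$, the number of linearly independent elements. The key idea is the one underlying the classical Dedekind/Artin independence of characters, adapted to the smash product $L\#\CH$: linear independence over $L$ on the left should translate into the existence of arguments in $\La$ on which the elements "separate".

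For the base case $n=1$ we have $X_1\neq 0$, so $X_1\in L\#\CH$ acts nonzero on $L$ by faithfulness (condition (iii) of the setting, which by Remark \ref{sec:settings-remark}(3) gives faithfulness of $L$ over $L\#\CH$ as well); hence $\hat{X_1}(f)\neq 0$ for some $f\in L$. Clearing denominators, since $\hat{X_1}$ is $\K$-linear and $L=\Frac(\La)$, we may scale to get $a_1\in\La$ with $X_1(a_1)\neq 0$, so the $1\times 1$ determinant is nonzero. For the inductive step, suppose the claim holds for $n-1$. Given linearly independent $X_1,\ldots,X_n$, by induction choose $a_1,\ldots,a_{n-1}\in\La$ with $d\defby\det\big(X_i(a_j)\big)_{i,j=1}^{n-1}\neq 0$. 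Now I would consider, for a variable $a\in\La$, the $n\times n$ determinant $D(a)=\det\big(X_i(a_j)\big)$ with the last column given by $X_i(a)$. Expanding along the last column, $D(a)=\sum_{i=1}^n c_i\, X_i(a)$ where the cofactors $c_i\in\La$ (or $L$) do not depend on $a$, and the cofactor $c_n$ of the $(n,n)$ entry is exactly $d\neq 0$. Thus $D(a)=\big(\sum_i c_i\hat{X_i}\big)(a)$, and the operator $\sum_i c_i\hat{X_i}$ corresponds to the element $Y=\sum_i c_i X_i\in L\#\CH$ (after clearing denominators if necessary, or simply working in $L\#\CH$). Since the $X_i$ are $L$-linearly independent on the left and $c_n=d\neq 0$, the element $Y$ is nonzero in $L\#\CH$. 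By faithfulness of $L$ over $L\#\CH$, $\hat Y\neq 0$, so there is $f\in L$ with $\hat Y(f)\neq 0$; scaling to land in $\La$ gives $a_n\in\La$ with $D(a_n)\neq 0$, completing the induction.

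The main obstacle is the bookkeeping of where the cofactors $c_i$ live: a priori they are determinants of submatrices with entries $X_i(a_j)\in\La$, hence lie in $\La$, which is clean — but one must make sure that $Y=\sum_i c_i X_i$ is really a nonzero element of $L\#\CH$, which is where linear independence of $\{X_i\}$ over $L$ is used in an essential way (the coefficient $c_n$ is the nonzero scalar $d$). A secondary point requiring care is the passage from "$\hat Y(f)\neq 0$ for some $f\in L$" to "$Y(a)\neq 0$ for some $a\in\La$": this uses that $\hat Y$ is $\K$-linear together with $L=\Frac(\La)$, so writing $f=a/b$ with $a,b\in\La$ and using $\hat Y(a)=\hat Y(b\cdot(a/b))$ is not directly valid ($\hat Y$ is not $\La$-linear), but instead one notes the $\La$-span of $\hat Y(\La)$ inside $L$ equals the $L$-span of $\hat Y(L)$, which is nonzero; alternatively, since $\hat Y$ is additive and $L=\bigcup_{b}\,b^{-1}\La$, if $\hat Y$ vanished on all of $\La$ it would vanish on $L$. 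Either way the reduction is routine, and the heart of the argument is the cofactor expansion combined with faithfulness.
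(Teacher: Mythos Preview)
Your inductive cofactor-expansion argument is precisely the approach the paper takes (it cites \cite{Har2020} for this step, after first recording that $L\#\CH\hookrightarrow\Hom_\K(\La,L)$). The overall structure is correct.

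However, your handling of the ``secondary point'' is where the argument actually breaks. You correctly observe that $\hat Y$ is not $\La$-linear, so one cannot pass from $f=a/b$ to $a$ na\"ively---but then both of your proposed workarounds commit the same error. The equality of the $\La$-span of $\hat Y(\La)$ with the $L$-span of $\hat Y(L)$ is asserted without proof (and is not obvious), and the alternative ``$\hat Y$ is additive and $L=\bigcup_b b^{-1}\La$'' does \emph{not} imply $\hat Y|_\La=0\Rightarrow\hat Y|_L=0$: additivity alone gives you nothing about $\hat Y(b^{-1}a)$ in terms of $\hat Y(a)$. The fix is to clear denominators in $Y$, not in its argument. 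Write $Y=b^{-1}Z$ with $b\in\La\setminus\{0\}$ and $Z\in\La\#\CH$ (possible since $Y$ is a finite $L$-linear combination of elements of $\CH$). Then $\hat Y(a)=b^{-1}\hat Z(a)$ for every $a\in\La$, so $\hat Y|_\La=0$ forces $\hat Z|_\La=0$, hence $Z=0$ by condition~(iii) of the setting, hence $Y=0$. This is exactly what the paper's flatness remark encodes: tensoring the injection $\La\#\CH\hookrightarrow\End_\K(\La)$ on the left by the flat $\La$-module $L$ gives $L\#\CH\hookrightarrow\Hom_\K(\La,L)$. The same correction applies to your base case.
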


\begin{proof}
Tensoring $\La\#\CH\hookrightarrow \End(\La), X\mapsto \hat X$, from the left by the flat $\La$-module $L$ gives an injective map
$L\#\CH\hookrightarrow \Hom(\La,L)$.
The rest is exactly as in part (ii) of \cite{Har2020}. 
\end{proof}

The following result is the main result of this section. It is a crucial step towards establishing Theorem \ref{thm:main} about finiteness of fibers. It is a generalization of \cite[Thm.~2.21]{Har2020}. We call it the \emph{Futorny-Ovsienko property} because the statement was part of the original definition of  Galois orders from \cite{FutOvs2010}.

\begin{Theorem}[Futorny-Ovsienko property]
 \label{thm:FO}
Let $(\CH,\La)$ be a setting and $F$ be an $\CH$-Galois $\La$-order.
Let $V$ be any finite-dimensional left (right) $L$-subspace of $L\#\CH$. Then $V\cap F$ is a finitely-generated left (right) $\La$-module.
\end{Theorem}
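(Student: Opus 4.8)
The plan is to reduce the statement to Lemma~\ref{lem:FO-det} and then run a Cramer's-rule argument, finishing with the noetherian hypothesis on $\La$. I will carry out the left-module case in full; the right-module case is handled by a mirror argument, discussed at the end.

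First I would observe that $V\cap F$ is a left $\La$-submodule of $L\#\CH$: since $\La\subseteq F$ and $F$ is a subalgebra, $\La\cdot(V\cap F)\subseteq F$, and since $V$ is a left $L$-subspace, $\La\cdot(V\cap F)\subseteq V$. So it suffices to produce a finitely generated left $\La$-module containing $V\cap F$, after which the noetherian hypothesis finishes the proof.

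Next, set $n=\dim_L V$ and fix a left $L$-basis $X_1,\dots,X_n$ of $V$. These are left $L$-linearly independent in $L\#\CH$, so Lemma~\ref{lem:FO-det} provides $a_1,\dots,a_n\in\La$ for which $d:=\det\bigl(\hat X_i(a_j)\bigr)_{i,j=1}^n$ is a nonzero element of $L$. Given an arbitrary $Y\in V\cap F$, write $Y=\sum_{i=1}^n c_iX_i$ with uniquely determined $c_i\in L$. Applying the algebra homomorphism $X\mapsto\hat X$, which satisfies $\hat{aX}=\hat a\,\hat X$ for $a\in L$, and evaluating at $a_j$, we obtain for each $j$ the identity $\hat Y(a_j)=\sum_{i=1}^n c_i\,\hat X_i(a_j)$ in $L$. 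This is a linear system over the field $L$ whose coefficient matrix has nonzero determinant $d$, so by Cramer's rule each $c_i$ is an $L$-linear combination of $\hat Y(a_1),\dots,\hat Y(a_n)$ with coefficients depending only on $X_1,\dots,X_n$ and $a_1,\dots,a_n$. Substituting back and using commutativity of $L$, we get $Y=\sum_{j=1}^n\hat Y(a_j)\,Z_j$ for certain $Z_j\in V$ that do not depend on $Y$. Since $Y\in F$ and $a_j\in\La$, condition (iii) of Definition~\ref{def:Galois-order} gives $\hat Y(a_j)\in\La$, hence $Y\in\sum_{j=1}^n\La Z_j$. Thus $V\cap F$ is contained in the finitely generated left $\La$-module $\sum_{j=1}^n\La Z_j$, and is therefore finitely generated because $\La$ is noetherian.

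Once Lemma~\ref{lem:FO-det} is available the argument above is essentially formal, so the real content sits in that lemma, where the faithfulness axiom \ref{def:setting}(iii) is used (via the injectivity of $L\#\CH\hookrightarrow\Hom_\K(\La,L)$) together with a Vandermonde-type independence argument. The point requiring genuine care is the right-module case: a right $L$-subspace of $L\#\CH$ need not be a left $L$-subspace (in contrast to the classical skew monoid algebra setting, where the grading by the monoid makes the two sides symmetric), so one cannot simply transpose the computation. I would handle it either by proving the right-handed counterpart of Lemma~\ref{lem:FO-det} and mirroring the above with the help of Lemma~\ref{lem:bimodule}(ii) in place of the smash relation \eqref{eq:smash}, or by transporting the problem through the antipode anti-automorphism $S$ of $L\#\tilde\CH$; this sidedness issue is the only real obstacle beyond what the left case already resolves.
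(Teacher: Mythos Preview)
Your left-module argument is correct and is essentially the paper's proof (the paper rescales the basis so that $d\in\La$, while you absorb $d^{-1}$ into the $Z_j$; both work). The right-module case, however, has a genuine gap: neither of your two suggested fixes is carried out, and neither is straightforward. A ``right-handed Lemma~\ref{lem:FO-det}'' is hard to even formulate, since for $Y=\sum_i X_ic_i$ one has $\hat Y(a)=\sum_i\hat{X_i}(c_ia)$, which is not a linear system in the $c_i$; and the antipode route lands in $L\#\tilde\CH^{\mathrm{cop}}$, where you would still have to check that $\La$ is a $\tilde\CH^{\mathrm{cop}}$-module algebra and that $S(F)$ satisfies the Galois-order axioms there, none of which is automatic or verified in your sketch.

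The paper's actual right-handed argument uses a different idea that your sketch misses: the Finiteness Theorem for Coalgebras. One embeds $V$ into $CL$ for a finite-dimensional \emph{left coideal} $C\subset\CH$, writes $X\in F\cap CL$ as $\sum_i c^ib_i$ in a $\K$-basis $\{c^i\}$ of $C$, and then uses the smash relation \eqref{eq:smash} together with $\Delta(C)\subset\tilde\CH\otimes C$ to rewrite this as $\sum_i\tilde b_ic^i$ with \emph{left} coefficients $\tilde b_i\in L$. Now the same left-handed Lemma~\ref{lem:FO-det}, applied to the $c^i$, bounds the $\tilde b_i$, giving $X\in\frac{1}{d}\La C$, and Lemma~\ref{lem:bimodule}(iii) turns $\La C=C\La$ into a finitely generated right $\La$-module. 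So your instinct to invoke Lemma~\ref{lem:bimodule} was on target, but the crucial move---replacing an arbitrary right $L$-basis by a $\K$-basis of a left coideal so that the side-switching actually closes up---is absent from your proposal.
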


\begin{proof}
Let $\{X_1,X_2,\ldots,X_n\}$ be a left $L$-basis for $V$. After rescaling if necessary, we may assume without loss of generality that $X_i\in\La\#\CH$ for all $i=1,2,\ldots,n$. By Lemma \ref{lem:FO-det}, there exists $\{a_1,a_2,\ldots,a_n\}\subset\La$ such that $d\defby \det\big(\hat{X_i}(a_j)\big)_{i,j=1}^n \in\La\setminus\{0\}$. We claim that $F\cap V\subset\frac{1}{d}(\La X_1+\La X_2+\cdots+\La X_n)$. Let $X\in F\cap V$. Since $X\in V$ there are $b_1,b_2,\ldots,b_n\in L$ such that
\[
X=b_1X_1+b_2X_2+\cdots b_nX_n.
\]
Since $X\in F$, $\hat{X}(a)\in\La$ for all $a\in\La$. In particular
\[
c_i\defby \hat X(a_i)=b_1 \hat{X_1}(a_i)+b_2\hat{X_2}(a_i)+\cdots+b_n\hat{X_n}(a_i)\in\La.
\]
Inverting the matrix $A=\big(X_i(a_i)\big)_{i,j=1}^n$ and using that $A^{-1}$ has entries from $\frac{1}{d}\La$, we conclude that $b_i\in\frac{1}{d}\La$ for all $i$.
Since $\frac{1}{d}(\La X_1+\La X_2+\cdots \La X_n)$ is generated by the finite set $\{\frac{1}{d} X_1, \frac{1}{d}X_2,\ldots,\frac{1}{d}X_n\}$ as a left $\La$-module, and $\La$ is noetherian, $F\cap V$ is also finitely generated as a left $\La$-module.

For the right-handed case, let $V$ be a finite-dimensional right $L$-subspace of $L\#\CH$. 
Since $L\#\CH=\CH L$ by Lemma \ref{lem:bimodule}(iv), there 
exists a finite-dimensional $\K$-subspace $\CH'$ of $\CH$ such that $V\subset \CH' L$. Furthermore, by the Finiteness Theorem for Coalgebras (see e.g. \cite{Mon1993}), there exists a finite-dimensional left coideal $C$ of $\tilde\CH$ such that $\CH'\subset C\subset \CH$.
Since $\La$ is noetherian it suffices to show that $F\cap (CL)$ is contained in a finitely generated right $\La$-module.
Let $\{c^1, c^2,\ldots, c^n\}$ be a $\K$-basis for $C$. 
Let $X\in F\cap (CL)$ and write
\[X = c^1 b_1+c^2 b_2+ \cdots + c^n b_n\]
for some $b_i\in L$.
By the smash relation \eqref{eq:smash} in $L\#\CH$,
\begin{align*}
X&= ( c^1_{(1)}\tri b_1) c^1_{(2)} + 
( c^2_{(1)}\tri b_1) c^2_{(2)} + \cdots
( c^m_{(1)}\tri b_1) c^n_{(2)}\\
&= \tilde b_1 c^1+ \tilde b_2 c^2+\cdots +\tilde b_n c^n
\end{align*}
for some $\tilde b_i\in L$, since $C$ is a left coideal of $\tilde\CH$. 
By Lemma \ref{lem:FO-det}, there exists $\{a_1,a_2,\ldots,a_n\}\subset\La$ such that $d\defby \det\big(c^i\tri a_j\big)_{i,j=1}^n \in\La\setminus\{0\}$.
Since $X\in F$ and $F$ is an $\CH$-Galois $\La$-order, we have $\hat X(a_i)\in\La$ for each $i$.
As in the left-handed case, this implies that 
 $\tilde b_i\in\frac{1}{d}\La$ for all $i$. Therefore, $X\in \frac{1}{d}\La C$. By Lemma \ref{lem:bimodule}(iii), $\La C = C\La$.
This shows that $F\cap (CL)\subset \frac{1}{d}C\La$, which is a finitely generated right $\La$-module.\end{proof}

\section{Harish-Chandra modules}\label{sec:HC-modules}

\subsection{Generalities}

\begin{Definition}[{\cite{DroFutOvs1994}}]
An $F$-module $V$ is a \emph{Harish-Chandra module} if
\begin{equation}
V=\bigoplus V^\Fm,\qquad  V^\Fm=\{v\in V\mid \Fm^N v=0, N\gg 0\},
\end{equation}
where the sum is over the set $\cfs(\La)$ of all maximal ideals $\Fm$ in $\La$ of finite codimension. Let $\mathrm{Irr}(F,\Fm)$ be the set of isomorphism classes of simple Harish-Chandra $F$-modules $V$ for which $V^\Fm\neq 0$.
\end{Definition}

\begin{Notation} 
Fix $\Fm\in\MaxSpec(\La)$. Put
\begin{equation}
\hat{F}_\Fm = \lim_{\longleftarrow} F/(F\Fm^N+\Fm^NF) 
= \lim \big(F/(F\Fm+\Fm F) \leftarrow F/(F\Fm^2+\Fm^2F) \leftarrow\cdots \big).
\end{equation}
Equip $\hat{F}_\Fm$ with the inverse limit topology, defined as the topology with fewest open sets such all cosets of the kernel of $\pi_N:\hat F_\Fm\to F/(F\Fm^N+\Fm^NF)$ are open for all $N$.
A left $\hat F_\Fm$-module $M$ is \emph{discrete} if the action $\hat F_\Fm\times M\to M$ is continuous when $M$ is given the discrete topology. This is equivalent to that for every $v\in M$ there exists an $N$ such that the map $\hat F_\Fm\to M$ given by $Y\mapsto Yv$ factors through $F/(F\Fm^N+\Fm^NF)$. 
Let $\mathrm{Irr}^\mathrm{d}(\hat F_\Fm)$ denote the set of isomorphism classes of simple discrete left $\hat F_\Fm$-modules. Similarly for right modules.
\end{Notation}

\begin{Theorem}[{\cite{DroFutOvs1994}}]
For any $\Fm\in\cfs(\La)$, the map $[V]\mapsto [V_\Fm]$ is a bijection between $\mathrm{Irr}(F,\Fm)$ and $\mathrm{Irr}^\mathrm{d}(\hat F_\Fm)$.
\end{Theorem}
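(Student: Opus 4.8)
The statement is attributed to \cite{DroFutOvs1994}, and the plan is to adapt the standard argument that establishes an equivalence between the category of Harish-Chandra modules supported at $\Fm$ and the category of discrete $\hat F_\Fm$-modules, then restrict this equivalence to simple objects. The key object is the ``localization at $\Fm$'' functor $V \mapsto V_\Fm := \varinjlim \Hom_\La(F/(F\Fm^N+\Fm^N F), V^{(\Fm)})$ — or more concretely $V_\Fm = \{v\in V : \Fm^N v = 0 \text{ for some } N\}$ viewed as a module over $\hat F_\Fm$ — and one shows it is well-defined, lands in discrete $\hat F_\Fm$-modules, and sets up the claimed bijection on isomorphism classes of simples. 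I would first recall that for a Harish-Chandra module $V$, the generalized weight space $V^\Fm$ is finite-dimensional over $\K$ (this uses that $\La$ is a Harish-Chandra subalgebra of $F$ — proved in the excerpt — together with finite generation arguments), so that $V^\Fm$ is naturally a finite-dimensional, hence discrete, module over the completion $\hat F_\Fm$ via the continuous action of $F$.

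**Main steps.** First I would define the functor in both directions: given a simple Harish-Chandra module $V$ with $V^\Fm \neq 0$, set $V_\Fm = V^\Fm$ with its $\hat F_\Fm$-action (the action of $F$ on $V^\Fm$ extends continuously because $\Fm^N$ annihilates any fixed vector for $N\gg 0$, and one checks $F\Fm^N + \Fm^N F$ acts appropriately); conversely, given a simple discrete $\hat F_\Fm$-module $M$, one constructs a Harish-Chandra $F$-module whose weight space at $\Fm$ recovers $M$ — the natural candidate is the ``induced'' module, i.e.\ take $M$ as an $F$-module by restriction along $F \to \hat F_\Fm$ and then pass to the Harish-Chandra part $\bigoplus_{\Fn} (\cdot)^\Fn$, or dualize. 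The second step is to prove that $V_\Fm$ is simple as a discrete $\hat F_\Fm$-module when $V$ is simple: any nonzero $\hat F_\Fm$-submodule $M' \subseteq V^\Fm$ generates an $F$-submodule $FM'$ of $V$, which equals $V$ by simplicity, and then $(FM')^\Fm = M'$ forces $M' = V^\Fm$ — here one needs that the weight space of the $F$-submodule generated by $M'$ is exactly $M'$, which follows because $F = \La \oplus F_-$ (Lemma \ref{lem:direct-sum}) controls how $F$ moves vectors between weight spaces, or more directly from the Harish-Chandra subalgebra property applied to $\Fm$-isotypic components. The third step is the converse simplicity and the fact that the two constructions are mutually inverse up to isomorphism, which is a diagram chase using that a Harish-Chandra module is generated by any nonzero weight space intersected appropriately — more precisely, that a simple Harish-Chandra module is determined by $V^\Fm$ as an $\hat F_\Fm$-module because $V = F\cdot V^\Fm$.

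**Main obstacle.** The delicate point is verifying that the functor $M \mapsto$ (Harish-Chandra module) is well-defined and produces a \emph{simple} object, and that $V = F \cdot V^\Fm$ for simple $V$ with $V^\Fm \neq 0$ (equivalently, that $F V^\Fm$ has all its other weight spaces ``reachable''). This is where one must use genuine structure of Hopf Galois orders rather than formal nonsense: the relevant input is that $F$ is an $\CH$-Galois $\La$-order, so $LF = L\#\CH$ and $\hat X(\La)\subseteq\La$, which together with the Harish-Chandra subalgebra property and the Futorny-Ovsienko property (Theorem \ref{thm:FO}) guarantees the finite-dimensionality of weight spaces and the noetherian-type finiteness needed to run the argument. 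Since the theorem is cited verbatim from \cite{DroFutOvs1994}, in the paper itself it suffices to observe that $\La$ is a Harish-Chandra subalgebra of $F$ (already established) and that the hypotheses of \cite[Thm.]{DroFutOvs1994} are met, so the proof reduces to this verification; I would write it that way, citing the source for the abstract equivalence and only checking the hypotheses explicitly.
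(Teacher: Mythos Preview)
The paper gives no proof of this statement at all: it is imported verbatim from \cite{DroFutOvs1994}, the only prerequisite being that $\La$ is a Harish-Chandra subalgebra of $F$, which was verified in the preceding lemma. Your final paragraph correctly identifies this, and that is indeed all the paper does.

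Your sketched direct argument, however, contains a genuine error. You assert that ``for a Harish-Chandra module $V$, the generalized weight space $V^\Fm$ is finite-dimensional over $\K$'' and that discreteness of $V^\Fm$ as an $\hat F_\Fm$-module follows from this. That claim is false in general: finite-dimensionality of $V^\Fm$ is not a consequence of the Harish-Chandra subalgebra property, nor of the Futorny-Ovsienko property (Theorem~\ref{thm:FO}). It is instead part of the \emph{conclusion} of the next cited theorem from \cite{DroFutOvs1994}, and requires the additional hypothesis that $\hat F_\Fm$ is finitely generated over $\hat\La_\Fm$ --- a hypothesis which fails in Example~\ref{ex:infinite}, for instance. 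The bijection theorem itself neither assumes nor concludes finite-dimensionality of weight spaces; discreteness of $V^\Fm$ over $\hat F_\Fm$ comes directly from the fact that every $v\in V^\Fm$ is annihilated by $\Fm^N$ for some $N$, together with the Harish-Chandra subalgebra property controlling how $F\Fm^N+\Fm^N F$ acts.

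Relatedly, your ``main obstacle'' paragraph overstates what is needed: the bijection is a purely abstract result in the \cite{DroFutOvs1994} framework and uses nothing specific to Hopf Galois orders beyond $\La$ being Harish-Chandra in $F$. Invoking $LF=L\#\CH$, $\hat X(\La)\subset\La$, or Theorem~\ref{thm:FO} here is unnecessary; those ingredients enter only later, in Lemma~\ref{lem:finite-fibers} and Theorem~\ref{thm:main}.
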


\begin{Theorem}[{\cite{DroFutOvs1994}}]
Let $\Fm\in\cfs(\La)$ and put $\displaystyle \La_\Fm=\lim_{\longleftarrow}\La/\Fm^N$.
The main theorem of \cite{DroFutOvs1994} states that if $\hat F_\Fm$ is finitely generated as a  right $\La_\Fm$-module, then
\begin{enumerate}[{\rm (i)}]
\item $\op{Irr}(F,\Fm)$ is finite,
\item For any $[V]\in\op{Irr}(F,\Fm)$, $V^\Fm$ is finite-dimensional.
\end{enumerate}
\end{Theorem}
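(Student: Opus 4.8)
This is precisely the main theorem of \cite{DroFutOvs1994}, so a reference suffices; here I sketch the plan one would follow to establish it directly. The strategy is to extract from $\hat F_\Fm$ a finite-dimensional semisimple $\K$-algebra through which every simple discrete $\hat F_\Fm$-module factors, and then to read off both conclusions using the bijection $\mathrm{Irr}(F,\Fm)\leftrightarrow\mathrm{Irr}^{\mathrm d}(\hat F_\Fm)$ recalled above.

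First I would record that $\La_\Fm$ is a commutative complete local noetherian ring: $\La$ is noetherian and $\Fm$ has finite codimension, so each $\La/\Fm^N$ is a finite-dimensional local $\K$-algebra, and $\La_\Fm$ has maximal ideal $\Fm\La_\Fm$ with residue field $\La_\Fm/\Fm\La_\Fm\cong\La/\Fm$, a finite extension of $\K$. (That $\hat F_\Fm$ is even a well-defined complete topological ring is itself a consequence of $\La$ being a Harish-Chandra subalgebra of $F$, as in \cite{DroFutOvs1994}.) Using the hypothesis that $\hat F_\Fm$ is finitely generated as a right $\La_\Fm$-module, I would observe that $\hat F_\Fm/\hat F_\Fm\,\Fm\La_\Fm$ is a finitely generated module over $\La/\Fm$, hence finite-dimensional over $\K$. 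Invoking the standard theory of algebras module-finite over a complete local ring, $\hat F_\Fm\,\Fm\La_\Fm$ lies in the Jacobson radical $J=J(\hat F_\Fm)$, so $A:=\hat F_\Fm/J$ is a quotient of $\hat F_\Fm/\hat F_\Fm\,\Fm\La_\Fm$ and is therefore a finite-dimensional semisimple $\K$-algebra.

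With this in hand the conclusions are immediate. Every simple $\hat F_\Fm$-module, in particular every simple discrete one, is annihilated by $J$ and hence is a simple $A$-module; since $A$ is finite-dimensional semisimple it has only finitely many isomorphism classes of simple modules, each finite-dimensional over $\K$. Thus $\mathrm{Irr}^{\mathrm d}(\hat F_\Fm)$ is finite with finite-dimensional members, and transporting this through the bijection $[V]\mapsto[V_\Fm]$ --- where $V_\Fm$ is the generalized weight space $V^\Fm$ equipped with its $\hat F_\Fm$-action --- yields (i) and (ii). I expect the real difficulty to lie entirely in the middle step: since $\hat F_\Fm$ is noncommutative, $\Fm\La_\Fm$ is not central in it, and the module-finiteness hypothesis is one-sided, so making rigorous the passage ``module-finite over a complete local ring $\Rightarrow$ semiperfect with finite-dimensional semisimple residue ring'' --- together with the foundational verification that $\hat F_\Fm$ behaves like a complete topological ring at all --- is exactly the technical core carried out in \cite{DroFutOvs1994}; the remaining deductions are formal.
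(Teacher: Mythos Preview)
The paper gives no proof of this statement; it is stated purely as a citation of \cite{DroFutOvs1994}, exactly as you note in your first sentence. Your sketch of the underlying argument---reducing to a finite-dimensional semisimple quotient of $\hat F_\Fm$ via module-finiteness over the complete local ring $\La_\Fm$, then transporting through the bijection $\mathrm{Irr}(F,\Fm)\leftrightarrow\mathrm{Irr}^{\mathrm d}(\hat F_\Fm)$---is a reasonable outline of what \cite{DroFutOvs1994} does, and you correctly flag where the genuine work lies.
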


\subsection{Existence of Harish-Chandra modules over $\CH$-Galois $\La$-orders} We first prove an existence result for general $\Fm$.

\begin{Theorem}\label{thm:nonempty}
Let $(\CH,\La)$ be a setting, $F$ be an $\CH$-Galois $\La$-order,
and $\Fm$ be a maximal ideal of $\La$. 
\begin{enumerate}[{\rm (i)}]
\item There exists a maximal right ideal $J$ of $F$ containing $\Fm$.
\item There exists a simple left $F^{\op{op}}$-module $M$ with $M^\Fm\neq 0$.
\item If $\Fm$ has finite codimension, then $\op{Irr}(F^{\op{op}},\Fm)\neq\emptyset$.
\end{enumerate}
\end{Theorem}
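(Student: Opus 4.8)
The plan is to prove the three parts in the order stated, with part (i) being the engine. For part (i), the key observation is Lemma \ref{lem:direct-sum}: the short exact sequence $0\to\La\to F\to F/\La\to 0$ of left $\La$-modules splits, i.e.\ $F=\La\oplus F_-$ as left $\La$-modules, where $F_-=\{X\in F\mid \hat X(1_\La)=0\}$. Since we are working with $F$ as a ring acting on the right on itself (equivalently, left $F^{\op{op}}$-modules), I would instead use the analogous right-module splitting. Concretely, consider $F$ as a right $\La$-module and note $\La\cdot F \supseteq F_-\cdot(\text{something})$... more carefully: one wants a complement of $\La$ in $F$ as a \emph{right} $\La$-module. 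Since $\La$ is a direct summand of $F$ as a left $\La$-module, the quotient $F/\La$ is a projective (hence free, over a domain if needed) ... rather than chase this, the cleanest route: take $J_0=\Fm F_- + \Fm$ where $F_-$ is as above but we need it to be a right ideal. Let me restructure: set $N=F_-$, the left $\La$-submodule with $F=\La\oplus N$. Then $\Fm = \Fm\cdot 1 \subseteq \Fm\oplus(\Fm N)$, and I claim $\Fm\oplus N$ (call it $I$) is a proper left $\La$-submodule of $F$ of ``codimension one in the $\La$-direction'': $F/I \cong \La/\Fm$. The problem is that $I$ need not be a right ideal. So the actual argument: form the right ideal $J'=\Fm F$ generated by $\Fm$. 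One must show $J'\neq F$, i.e.\ $\Fm F\cap (\text{unit})$ is avoided; equivalently $1\notin\Fm F$. Using $F=\La\oplus N$ as left $\La$-modules and applying $\hat{(-)}(1_\La)$: every element of $\Fm F$ has the form $\sum m_i X_i$ with $m_i\in\Fm$, $X_i\in F$; applying the map $X\mapsto \hat X(1_\La)$ (which is left $\La$-linear and the identity on $\La$) sends $\Fm F$ into $\Fm$, but sends $1$ to $1\notin\Fm$. Hence $1\notin\Fm F$, so $\Fm F$ is a proper right ideal, and by Zorn's lemma it extends to a maximal right ideal $J\supseteq\Fm$. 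This proves (i).

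For part (ii), set $M=F^{\op{op}}/J^{\op{op}}$, i.e.\ regard $F/J$ as a left $F^{\op{op}}$-module (it is a simple left $F^{\op{op}}$-module since $J$ is a maximal right ideal of $F$). The image $\bar 1$ of $1$ is nonzero and is annihilated by $\Fm$ on the right, i.e.\ $\bar 1\cdot\Fm=0$ inside $F/J$ since $\Fm\subseteq J$; translated to the left $F^{\op{op}}$-action this says $\Fm\cdot\bar 1 = 0$ in $M$, so $\bar 1\in M^\Fm$ (in fact $\Fm M^{\ni\bar1}$ with $N=1$). Thus $M^\Fm\neq 0$ and $M$ is simple. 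This gives (ii). One small point: I should double-check the op-bookkeeping so that ``$J$ maximal right ideal of $F$'' corresponds precisely to ``$J$ maximal left ideal of $F^{\op{op}}$'', which is immediate from the definition of the opposite algebra, and that the weight condition $\Fm^N v=0$ in the Harish-Chandra module definition is read with the $F^{\op{op}}$-action, so the relevant $\La\subseteq F=F^{\op{op}}$ acts the same way (since $\La$ is commutative, $\La\subseteq Z$-adjacent considerations aside, the left and right actions of $\La$ on $M$ differ but both kill $\bar1$ after one power of $\Fm$).

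For part (iii), assume $\codim_\K\Fm<\infty$. Then $M^\Fm\neq 0$ by (ii); I must upgrade $M$ to an honest Harish-Chandra $F^{\op{op}}$-module, i.e.\ produce a simple subquotient (or the relevant simple module) that is a sum of finite-dimensional generalized weight spaces. The standard device, as in \cite{DroFutOvs1994} and \cite{FutOvs2014}: since $\Fm$ has finite codimension and $M$ is generated by $\bar 1$ with $\Fm\bar1=0$, the weight space $M^\Fm$ is nonzero; consider the ``Harish-Chandra'' submodule/quotient — take $M' = \bigoplus_{\Fn\in\cfs(\La)} M^\Fn$, a Harish-Chandra $F^{\op{op}}$-submodule of $M$ (one checks it is an $F^{\op{op}}$-submodule using that $\La$ is a Harish-Chandra subalgebra, so acting by any element of $F$ moves a generalized weight vector into a finite sum of generalized weight vectors — this is exactly the content of \cite{DroFutOvs1994}). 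Since $M$ is simple and $M'\neq 0$ (it contains $\bar1$), we get $M'=M$, so $M$ itself is a simple Harish-Chandra module with $M^\Fm\neq0$, giving $[M]\in\op{Irr}(F^{\op{op}},\Fm)$, hence that set is nonempty. The main obstacle, and the place requiring the most care, is verifying that $M'$ really is an $F^{\op{op}}$-submodule — i.e.\ that the Harish-Chandra subalgebra property of $\La\subset F$ (already established in the excerpt) does transfer to $F^{\op{op}}$ with $\La\subset F^{\op{op}}$, which it does because the Harish-Chandra condition (every finitely generated $(\La,\La)$-sub-bimodule of $F$ is finitely generated as a left and as a right $\La$-module) is manifestly left-right symmetric, so $\La$ is equally a Harish-Chandra subalgebra of $F^{\op{op}}$, and then the abstract machinery of \cite{DroFutOvs1994} applies verbatim.
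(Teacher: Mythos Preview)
Your proposal is correct and follows essentially the same approach as the paper. After some initial meandering in part (i), you land on exactly the paper's argument: the left $\La$-linear map $F\to\La/\Fm$, $X\mapsto \hat X(1_\La)+\Fm$, kills $\Fm F$ but not $1$, so $\Fm F\neq F$ and Zorn gives a maximal right ideal $J\supseteq\Fm$; parts (ii) and (iii) are then handled identically to the paper (the paper cites \cite[Prop.~14]{DroFutOvs1994} for the fact that a module generated by a cofinite generalized weight vector is Harish-Chandra, which is your $M'=M$ argument), and your remark that the Harish-Chandra subalgebra condition is left-right symmetric, hence passes to $F^{\op{op}}$, is exactly the point needed.
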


\begin{proof}
(i) It suffices to show that $\Fm F$ is a proper right ideal of $F$. Let $\psi_\Fm: F\to\La/\Fm$ be the composition of the left $\La$-module epimorphism $F\to\La$, $X\mapsto\hat X(1_\La)$ and the canonical projection $\La\to\La/\Fm$. Then $\Fm F\subset\ker(\psi_\Fm)$ and we get an induced map $\bar\psi_\Fm:F/\Fm F\to \La/\Fm$ which is an epimorphism  of left $\La$-modules. In particular $F/\Fm F$ is nonzero.

(ii) Take $M=F/J$ where $J$ is as in (i). Then $1+J \in M^\Fm\setminus\{0\}$.

(iii) Any module generated by a generalized weight vector for $\La$ of weight $\Fm\in\cfs(\La)$ is a Harish-Chandra module, by \cite[Prop.~14]{DroFutOvs1994}. Thus (iii) follows from (ii).
\end{proof}

\subsection{Canonical modules of local distributions}
We refer to \cite[Chapter I.7]{Jan1987} for an algebraic treatment of local distributions.

Let $(\CH,\La)$ be a setting and $F$ be an $\CH$-Galois $\La$-order. Then $\La$ is a left $F$-module, hence the dual space 
$\La^\ast=\Hom_\K(\La,\K)$ is a left $F^{\op{op}}$-module with action
\begin{equation}\label{ref:right-canonical}
(X\xi)(a)=\xi \big( \hat X(a)\big),\quad \forall\xi\in\La^\ast,\, X\in F^{\op{op}},\, a\in \La.
\end{equation}
For $\Fm\in \cfs(\La)$, we let $\op{Dist}(\La,\Fm)$ be the subspace of $\La^\ast$ of all linear functionals $\xi$ such that $\Fm^n\subset\ker\xi$ for $n\gg 0$.
Equivalently, $\op{Dist}(\La,\Fm)=\Hom_\K^{\mathrm{d}}(\hat\La_\Fm,\K)$ the set of linear functionals on $\hat\La_\Fm$ that are continuous with respect to pro-finite topology on $\hat\La_\Fm$ and discrete topology on $\K$.
 Put $\op{Dist}(\La)=\bigoplus_\Fm \op{Dist}(\La,\Fm)$ where $\Fm$ runs over $\cfs(\La)$.

\begin{Theorem} \label{thm:canonical}
Let $F$ be an $\CH$-Galois $\La$-order.
\begin{enumerate}[{\rm (i)}]
\item $\op{Dist}(\La)$ is an $F^{\op{op}}$-submodule of $\La^\ast$.
\item $\op{Dist}(\La)$ is a Harish-Chandra $F^{\op{op}}$-module with respect to $\La$.
\item If $\la:\La\to\K$ is an algebra map with kernel $\Fm_\la$, then the cyclic left $F^{\op{op}}$-submodule of $\op{Dist}(\La)$ generated by $\la$ has a unique simple quotient $V(\la)$. Moreover $V(\la)$ is a simple Harish-Chandra $F^{\op{op}}$-module with $V^{\Fm_\la}\neq 0$.
\end{enumerate}
\end{Theorem}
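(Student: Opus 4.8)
The plan is to treat the three parts in order, since (ii) and (iii) build on (i).

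\textbf{Part (i).} I would show $\op{Dist}(\La)$ is stable under the $F^{\op{op}}$-action \eqref{ref:right-canonical}. Fix $\Fm\in\cfs(\La)$ and $\xi\in\op{Dist}(\La,\Fm)$, so $\Fm^n\subset\ker\xi$ for some $n$. For $X\in F$, I must find $N$ with $\Fm^N\subset\ker(X\xi)$, i.e. $\xi\big(\hat X(\Fm^N)\big)=0$. Since $\La$ is a Harish-Chandra subalgebra of $F$, the $(\La,\La)$-bimodule $\La X\La$ is finitely generated as a left $\La$-module, say by $Y_1,\dots,Y_r\in F$; then $\hat X(\Fm^N)\subset \sum_j \La\,\hat{Y_j}(\Fm^N)$. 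Actually the cleaner route: the map $a\mapsto \hat X(a)$ is left-$\La$-linear only up to the bimodule twist, so instead I would use that $\Fm^N F\subset F\Fm^{N'}+\Fm^{N'}F$-type containments. Concretely, because $\La$ is Harish-Chandra in $F$, for each $X$ there is $c$ with $\Fm^{c}X \subset X\La + \Fm F \cdot(\text{stuff})$; more robustly, finite generation of $\La X\La$ on the left gives a single integer $c=c(X)$ such that $\widehat{X}(\Fm^{n+c})\subset\Fm^{n}$, which suffices. I expect a short lemma: if $\La X\La=\sum_{j=1}^r\La Y_j$ then, writing $X a = \sum_j f_j(a) Y_j$ with $f_j(a)\in\La$, one gets $\hat X(\Fm^{N})\subset\sum_j (\text{ideal})\hat{Y_j}(1)\subset \Fm^{N-c}$ for suitable $c$. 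Then $\Fm^{n+c}\subset\ker(X\xi)$, so $X\xi\in\op{Dist}(\La,\Fm)\subset\op{Dist}(\La)$.

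\textbf{Part (ii).} By definition $\op{Dist}(\La)=\bigoplus_{\Fm\in\cfs(\La)}\op{Dist}(\La,\Fm)$, and from (i) each summand lies in $\big(\op{Dist}(\La)\big)^{\Fm}$ for the $\La$-action (indeed $\Fm^n$ annihilates $\op{Dist}(\La,\Fm)$ for appropriate $n$, and one checks the summands with distinct $\Fm$ have zero generalized-$\Fm$-weight part). Since the generalized weight decomposition of any $\La$-module is a direct sum of such pieces and these exhaust $\op{Dist}(\La)$, it is a Harish-Chandra $F^{\op{op}}$-module. This is essentially bookkeeping once (i) is in place.

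\textbf{Part (iii).} Let $M$ be the cyclic $F^{\op{op}}$-submodule generated by $\la\in\op{Dist}(\La,\Fm_\la)$, where $\Fm_\la$ has codimension one. I would show $M$ has a unique maximal proper submodule. First, $M$ is a Harish-Chandra module with $M^{\Fm_\la}\ni\la$ one-dimensional: indeed $\la$ itself spans the $\Fm_\la$-weight space of $\op{Dist}(\La,\Fm_\la)$ in degree $0$, and — this is the key point — I claim $\dim M^{\Fm_\la}=1$. The reason: any $v\in M^{\Fm_\la}$ with $\Fm_\la v=0$ is, as a functional on $\La$, supported in the $0$-th layer $\La/\Fm_\la\cong\K$, hence a scalar multiple of $\la$ (using that $\op{Dist}(\La,\Fm_\la)$ restricted to $\{\xi:\Fm_\la\xi=0\}$ is one-dimensional, and $M\subset\op{Dist}(\La,\Fm_\la)$). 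Therefore any proper submodule $N\subsetneq M$ has $N^{\Fm_\la}=0$ (if it contained a nonzero weight vector it would contain $\la$, hence equal $M$). The sum of all proper submodules thus still has zero $\Fm_\la$-weight space, so is proper; call it $N_{\max}$, and set $V(\la)=M/N_{\max}$. This is the unique simple quotient, it is Harish-Chandra (quotient of one), and $V(\la)^{\Fm_\la}\neq 0$ since $\la\notin N_{\max}$.

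\textbf{Main obstacle.} The crux is the finiteness in (i) made uniform — producing, for each $X\in F$, a single integer $c(X)$ with $\hat X(\Fm^{N})\subset\Fm^{N-c(X)}$ for all large $N$ — and in (iii) the claim that the $\Fm_\la$-weight space (annihilated by $\Fm_\la$, codimension-one case) of $\op{Dist}(\La,\Fm_\la)$ is one-dimensional, which is where codimension one is genuinely used: it identifies the socle layer with $\K$. Both follow from the Harish-Chandra property of $\La$ in $F$ together with the structure of $\op{Dist}(\La,\Fm)=\Hom^{\mathrm d}(\hat\La_\Fm,\K)$, but the bimodule-twist in \eqref{ref:right-canonical} (via Lemma \ref{lem:bimodule}) must be tracked carefully to see that $\op{Dist}(\La)$, not merely $\La^\ast$, is preserved.
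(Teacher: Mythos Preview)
Your argument for Part (iii) is essentially the paper's: the strict $\Fm_\la$-weight space of $\La^\ast$ is
\[
(\La^\ast)_{\Fm_\la}=\Hom_\La(\La/\Fm_\la,\La^\ast)\cong\Hom_\K(\La/\Fm_\la,\K)\cong\K,
\]
hence spanned by $\la$, so every nonzero submodule of $M=F^{\op{op}}\la$ with nonzero generalized $\Fm_\la$-part must contain $\la$; the sum of proper submodules is therefore proper. One small slip: you write ``$M\subset\op{Dist}(\La,\Fm_\la)$'', but this is false in general (see below). It is harmless here, since all you need is $M\subset\La^\ast$.

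Part (i), however, has a genuine gap. You aim to show $X\xi\in\op{Dist}(\La,\Fm)$ for the \emph{same} $\Fm$, via an inclusion $\hat X(\Fm^{N})\subset\Fm^{N-c(X)}$. This inclusion is false. Take $\La=\K[t]$, $\CH=\K\Z$ with the generator $g$ acting by $g\tri t=t+1$, and $\Fm=(t)$. Then $\hat g(\Fm^N)=(t+1)^N\K[t]$, which is not contained in $\Fm$ at all (the constant term of $(t+1)^N$ is $1$). Correspondingly, if $\xi$ is evaluation at $0$, then $g\xi$ is evaluation at $1$, so $g\xi\in\op{Dist}(\La,(t-1))$ and \emph{not} in $\op{Dist}(\La,(t))$. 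In general $X\xi$ lands in a finite sum $\bigoplus_{\Fn}\op{Dist}(\La,\Fn)$ over several maximal ideals, not in a single summand; this is exactly the content of \cite[Prop.~14]{DroFutOvs1994}, which is what the paper invokes. The Harish-Chandra property of $\La\subset F$ is used there, but to produce a finite set of cofinite maximal ideals $\Fn$ (essentially those appearing in $\La X\La\otimes_\La \La/\Fm^n$), not to force $\Fn=\Fm$. Once Part (i) is corrected in this way, your Part (ii) goes through unchanged, and Part (iii) is fine after dropping the claim $M\subset\op{Dist}(\La,\Fm_\la)$.
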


\begin{proof}
$\op{Dist}(\La,\Fm)$ is nothing but the generalized weight space in $\La^\ast$ of weight $\Fm$. Since $\La$ is a Harish-Chandra subalgebra of $F$, (i) and (ii) follow directly from \cite[Prop.~14]{DroFutOvs1994}.

(iii) Following the argument from \cite[Prop.~2]{Nil2016}, as used in \cite[Prop.~5]{EarMazVis2020}, we have
\[
\Hom_\La(\La/\Fm_\la, \La^\ast)\cong\Hom_\K(\La\otimes_\La (\La/\Fm_\la), \K)\cong\K.
\]
Thus the $\Fm_\la$-weight space $\op{Dist}(\La)_{\Fm_\la} = (\La^\ast)_{\Fm_\la} = \{\xi\in\La^\ast\mid \xi\Fm_\la=0\}$ is one-dimensional and is spanned by $\la$. Therefore any $F$-submodule $N$ of $\op{Dist}(\La)$ with $N^{\Fm_\la}\neq 0$ (hence $N_{\Fm_\la}\neq 0$) contains the cyclic submodule $\la F$. This shows that the sum of all proper right $F$-submodules of $\la F$ is itself proper, and equals the unique maximal $F$-submodule.
\end{proof}

\subsection{Superfluous subcoalgebras and the stabilizer coalgebra} \label{sec:stabilizer}

When $\CH$ is a group algebra $\K G$ acting on $\La$, it acts on the maximal spectrum of $\La$, and the stabilizer subgroup $\op{Stab}(G,\Fm)=\{g\in G\mid g(\Fm)=\Fm\}$ spans a subalgebra which is a Hopf subalgebra of $\K G$.
The dimension of this subalgebra (i.e. the order of the subgroup $\op{Stab}(G,\Fm)$), plays a crucial role in the finiteness theorem of \cite{Web2019}.
For more general Hopf algebras (and coideal subalgebras) $\CH$, giving meaning to ``the stabilizer'' $\op{Stab}(\CH,\Fm)$ is a bit trickier. 
In \cite{Sch1990}, such stabilizers are defined in a setting of Hopf-Galois extensions and the result is a subcoalgebra of $\CH$.
However in the context of the present paper (which is incomparable to that of \cite{Sch1990}), it  more natural for us to define the stabilizer $\op{Stab}(\CH,\Fm)$ as a \emph{quotient} of $\CH$. 
Therefore we first consider a certain ``superfluous coradical'' $\CS(\CH,\Fm)$, by which we mod out in order to obtain $\op{Stab}(\CH,\Fm)$.
When $\CH$ is a group algebra $\K G$, then $\CS(\CH,\Fm)$ is the span of the complement of $\op{Stab}(G,\Fm)$ in $G$.

\begin{Definition} Let $(\CH,\La)$ be a setting,
 $\CC$ be a subcoalgebra of $\tilde\CH$ and $\Fm\in\MaxSpec(\La)$.
\begin{enumerate}[{\rm (i)}]
\item An element $R=\sum_i r_i\otimes s_i\in\La\otimes\La$ is a \emph{reductor for $\CC$ mod $\Fm$} if
\begin{enumerate}[{\rm (R1)}]
\item The image of $R$ in $\La\otimes\La/(\La\otimes\Fm+\Fm\otimes\La)$ is invertible,
\item $\sum_i r_i\cdot (x\tri s_i) = 0$ for all $x\in\CC$.
\end{enumerate}
\item $\CC$ is called $\emph{superfluous mod $\Fm$}$ if for every finite-dimensional subcoalgebra $\CC'$ of $\CC$ there exists a reductor for $\CC'$ mod $\Fm$.
\end{enumerate}
\end{Definition}

\begin{Remark}
If $\CC$ is superfluous mod $\Fm$ then the map $\CC\to (\La/\Fm)\otimes_\La \La \CC\La \otimes_\La (\La/\Fm)$ (sending $x$ to $1\otimes x \otimes 1$) is the zero map, which motivates the terminology. Indeed, for any $x\in\CC$ we have $1\otimes x\otimes 1=1\otimes \sum_{i,j} r_i r_j' x s_i s_j'\otimes 1=1\otimes\sum_j r_j'(\sum_i r_i x_{(1)}\tri s_i)x_{(2)}s_j'\otimes 1=0$ if $R=\sum_i r_i\otimes s_i$ is a reductor for a finite-dimensional subcoalgebra of $\CC$ containing $x$, and $R'=\sum_j r_j'\otimes s_i'\in\La\otimes\La$ is an inverse for $R$ mod $\Fm\otimes\La+\La\otimes\Fm$.
\end{Remark}

\begin{Example}
If $\CC=\K g$ where $g$ is a grouplike element of $\tilde\CH$, then $\CC$ is superfluous mod $\Fm$ iff $g\tri\Fm\neq\Fm$.
\end{Example}

\begin{Lemma}\label{lem:superfluous}
If $\CC_1$ and $\CC_2$ are two subcoalgebras of $\tilde\CH$ that are superfluous mod $\Fm$ then $\CC_1+\CC_2$ is also superfluous mod $\Fm$.
\end{Lemma}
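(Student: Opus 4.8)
\textbf{Proof plan for Lemma \ref{lem:superfluous}.}
The plan is to reduce to the case of finite-dimensional subcoalgebras (as in the definition of superfluous) and then construct a reductor for a finite-dimensional subcoalgebra of $\CC_1+\CC_2$ out of reductors for suitably chosen finite-dimensional subcoalgebras of $\CC_1$ and $\CC_2$, by a product construction. So let $\CC'$ be a finite-dimensional subcoalgebra of $\CC_1+\CC_2$. The first step is to find finite-dimensional subcoalgebras $\CC_1'\subset\CC_1$ and $\CC_2'\subset\CC_2$ with $\CC'\subset\CC_1'+\CC_2'$; this follows from the Finiteness Theorem for Coalgebras, since each element of $\CC'$, being a sum of an element of $\CC_1$ and an element of $\CC_2$, lies in a finite-dimensional subcoalgebra of $\CC_1$ plus one of $\CC_2$, and we may take $\CC_i'$ to be the (finite-dimensional) sum of finitely many such. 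By hypothesis there is a reductor $R=\sum_i r_i\otimes s_i$ for $\CC_1'$ mod $\Fm$ and a reductor $R'=\sum_j r_j'\otimes s_j'$ for $\CC_2'$ mod $\Fm$.

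The key step is to show that the ``product'' $\tilde R\defby \sum_{i,j} r_i r_j'\otimes s_j' s_i\in\La\otimes\La$ (note the order: the first tensor leg multiplies in one order, the second in the reverse) is a reductor for $\CC'$ mod $\Fm$. For (R1): the image of $R$ in $\La\otimes\La/(\La\otimes\Fm+\Fm\otimes\La)\cong(\La/\Fm)\otimes(\La/\Fm)$ is invertible, and likewise for $R'$; since reduction mod $\Fm\otimes\La+\La\otimes\Fm$ is a ring homomorphism $\La\otimes\La\to(\La/\Fm)\otimes(\La/\Fm)$ and the image of $\tilde R$ is the product of the images of $R$ and the flip of $R'$ (both invertible), $\tilde R$ satisfies (R1). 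For (R2): fix $x\in\CC'\subset\CC_1'+\CC_2'$, and using $\Delta(\CC_2')\subset\CC_2'\otimes\CC_2'$ (subcoalgebra) together with the module-algebra axiom $x\tri(ab)=(x_{(1)}\tri a)(x_{(2)}\tri b)$, compute
\[
\sum_{i,j} r_i r_j'\cdot\bigl(x\tri(s_j' s_i)\bigr)
=\sum_{i,j} r_i r_j'\,(x_{(1)}\tri s_j')\,(x_{(2)}\tri s_i).
\]
Here I would first apply the $\CC_2'$-reductor relation (R2) for $R'$ to collapse $\sum_j r_j'(y\tri s_j')=0$ for each $y\in\CC_2'$ — but this requires $x_{(1)}\in\CC_2'$, which need not hold since $x\in\CC_1'+\CC_2'$, not $\CC_2'$. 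So the actual argument must be more careful: write $x=x^{(1)}+x^{(2)}$ with $x^{(m)}\in\CC_m'$ is not coalgebra-compatible either. The correct route is to instead treat $\CC'$ directly as a subcoalgebra of $\CC_1'+\CC_2'$ and use that for $x\in\CC'$, in the expression $\sum_{i,j} r_i r_j'(x_{(1)}\tri s_j')(x_{(2)}\tri s_i)$, one may sum over $j$ first: since $\CC'\subset\CC_1'+\CC_2'$ and $R'$ kills $\CC_2'$ but we need it to kill the relevant components — here is the subtlety. The clean fix is: apply (R2) for $R'$ only after observing $\sum_j r_j'(z\tri s_j')=0$ for all $z\in\CC_2'$, hence for all $z$ in the subcoalgebra generated, and note that $x_{(1)}$ ranges over a subcoalgebra of $\CC_1'+\CC_2'$; decompose and use that the $\CC_1'$-part is annihilated using $R$ after summing over $i$ first instead. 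Concretely, one shows $\tilde R$ works by the two-step collapse: $\sum_j r_j'(x_{(1)}\tri s_j')$ equals (applying (R2) for $R'$ on the $\CC_2'$-summand of the coproduct) a term supported on $\CC_1'$, and then summing $\sum_i r_i(\cdot\tri s_i)$ over $i$ kills the $\CC_1'$-part via (R2) for $R$.

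The main obstacle is precisely this bookkeeping: coproducts of elements of $\CC'$ need not split compatibly along the (non-direct, in general) decomposition $\CC_1'+\CC_2'$, so one cannot naively apply the two reductor relations to ``the two halves'' of $x$. I expect the resolution to be to choose $\CC_2'$ large enough that it is a subcoalgebra containing $\CC'+\CC_2$-parts in a way that makes $\Delta(\CC')\subset \CC_2'\otimes\tilde\CH+\tilde\CH\otimes\CC_1'$ hold — or, more robustly, to argue as in the Remark: it suffices to show the map $\CC'\to(\La/\Fm)\otimes_\La\La\CC'\La\otimes_\La(\La/\Fm)$ is zero, and this factors through the analogous maps for $\CC_1+\CC_2$, each of which vanishes by hypothesis; reductors for $\CC'$ mod $\Fm$ then exist by unwinding the tensor-product vanishing, using that $\La\otimes\La\to(\La/\Fm)\otimes(\La/\Fm)$ is surjective and that $\La\CC'\La$ is a finitely generated $(\La,\La)$-bimodule (Lemma on $\La$ being Harish-Chandra) so the obstruction lives in a finite-dimensional space. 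I would carry out the argument in the second form to sidestep the coproduct incompatibility entirely.
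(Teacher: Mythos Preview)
Your overall strategy---take the product of two reductors---is exactly the paper's, but you abandon the direct argument at precisely the point where it works. The decomposition $x=x^{(1)}+x^{(2)}$ with $x^{(m)}\in\CC_m'$ that you dismiss as ``not coalgebra-compatible'' is in fact the correct move: condition (R2), namely $\sum_i r_i\,(x\tri s_i)=0$, is \emph{linear} in $x$, so it suffices to verify it for $x^{(1)}$ and $x^{(2)}$ separately. Once you do that, each $x^{(m)}$ lives in the subcoalgebra $\CC_m'$, so its Sweedler components $x^{(m)}_{(1)},x^{(m)}_{(2)}$ stay in $\CC_m'$, and the reductor $R_m$ kills the relevant factor. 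There is no need for the coproduct of the original $x$ to split along $\CC_1'+\CC_2'$.

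The paper packages this neatly: view $\La\otimes\La$ as an $\tilde\CH$-module algebra via $x.(a\otimes b)=a\otimes(x\tri b)$, so that (R2) reads $\CC.R\subset\ker\mu_\La$ and the module-algebra axiom gives $x.(R_1R_2)=(x_{(1)}.R_1)(x_{(2)}.R_2)$. Since $\ker\mu_\La$ is an ideal, one gets immediately
\[
(\CC_1+\CC_2).(R_1R_2)\subset(\CC_1.R_1)(\CC_1.R_2)+(\CC_2.R_1)(\CC_2.R_2)\subset\ker\mu_\La,
\]
the first inclusion using linearity to split into $\CC_1$- and $\CC_2$-pieces \emph{before} applying the coproduct.

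Your proposed fallback via the Remark has its own gap: the Remark only asserts that superfluity implies the vanishing of $\CC'\to(\La/\Fm)\otimes_\La\La\CC'\La\otimes_\La(\La/\Fm)$, not the converse, and ``unwinding the tensor-product vanishing'' to recover an honest reductor in $\La\otimes\La$ is not automatic. Stick with the product construction and simply split $x$ linearly first.
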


\begin{proof}
To simplify the proof we first reformulate condition (R2).
Let $.$ denote the action of $\tilde\CH$ on $\La\otimes\La$ given by
\begin{equation}
x.(a\otimes b) = a \otimes (x\tri b),\quad\forall x\in\tilde\CH, \; a,b\in\La.
\end{equation}
This makes $\La\otimes\La$ an $\tilde\CH$-module algebra. In particular,
\begin{equation}
x.(RS)=(x_{(1)}.R)(x_{(2)}.S),\quad\forall x\in\tilde\CH, \; R,S\in\La\otimes\La.
\end{equation}
Let $\mu_\La:\La\otimes\La\to\La$ denote the multiplication map.
Then condition (R2) is equivalent to
\begin{equation}
\CC.R\subset\ker \mu_\La.
\end{equation}
For $i=1,2$, let $R_i$ be a reductor for $\CC_i$ mod $\Fm$. Consider the product $R=R_1R_2$. Clearly $R$ satisfies condition (R1) in the definition above. We have
\[(\CC_1+\CC_2).R \subset \CC_1.(R_1R_2)+\CC_2.(R_1R_2)
\subset (\CC_1.R_1)(\CC_1.R_2)+(\CC_2.R_1)(\CC_2.R_2)\subset \ker \mu_\Lambda\]
since $\CC_i.R_i\subset\ker \mu_\La$ for $i=1,2$ and $\ker \mu_\La$ is an ideal of $\La\otimes\La$.
\end{proof}

\begin{Definition}
Let $\CS(\tilde\CH,\Fm)$ denote the sum of all subcoalgebras of $\tilde\CH$ that are superfluous mod $\Fm$.
\end{Definition}

\begin{Corollary}
$\CS(\tilde\CH,\Fm)$ is superfluous mod $\Fm$. Consequently $\CS(\tilde\CH,\Fm)$ is the unique maximal (with respect to inclusion) element in the family of all subcoalgebras of $\tilde\CH$ that are superfluous mod $\Fm$.
\end{Corollary}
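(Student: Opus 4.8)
The plan is to verify directly that $\CS(\tilde\CH,\Fm)$ satisfies the defining property of being superfluous mod $\Fm$, by reducing — for each finite-dimensional ``test'' subcoalgebra — to a \emph{finite} sum of superfluous subcoalgebras, which is exactly the situation handled by Lemma \ref{lem:superfluous}. Once superfluity of $\CS(\tilde\CH,\Fm)$ is in hand, the maximality and uniqueness statements are immediate from the definition of $\CS(\tilde\CH,\Fm)$ as the sum of \emph{all} subcoalgebras superfluous mod $\Fm$.

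In detail, write $\CS=\CS(\tilde\CH,\Fm)$ and let $\{\CC_\alpha\}_{\alpha\in A}$ be the family of all subcoalgebras of $\tilde\CH$ that are superfluous mod $\Fm$, so that $\CS=\sum_{\alpha\in A}\CC_\alpha$. Let $\CC'$ be an arbitrary finite-dimensional subcoalgebra of $\CS$; by the definition of superfluous it suffices to produce a reductor for $\CC'$ mod $\Fm$. Choosing a finite $\K$-basis of $\CC'$ and expressing each basis vector as a finite sum of elements drawn from the $\CC_\alpha$, we obtain finitely many indices $\alpha_1,\ldots,\alpha_k\in A$ with
\[
\CC'\subset\CD:=\CC_{\alpha_1}+\CC_{\alpha_2}+\cdots+\CC_{\alpha_k}.
\]
An easy induction on $k$ — base case $k=1$ trivial, inductive step supplied by Lemma \ref{lem:superfluous} applied to $\CC_{\alpha_1}+\cdots+\CC_{\alpha_{k-1}}$ and $\CC_{\alpha_k}$ — shows that $\CD$ is superfluous mod $\Fm$. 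Since $\CC'$ is a subcoalgebra of $\tilde\CH$ contained in $\CD$, it is a finite-dimensional subcoalgebra of $\CD$, so the definition of ``$\CD$ superfluous mod $\Fm$'' furnishes a reductor for $\CC'$ mod $\Fm$. As $\CC'$ was arbitrary, $\CS$ is superfluous mod $\Fm$.

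For the final assertion: every superfluous subcoalgebra of $\tilde\CH$ is by definition contained in $\CS$, and we have just shown $\CS$ is itself superfluous; hence $\CS$ is the largest element of the family with respect to inclusion, and in particular the unique maximal one. There is no genuine obstacle here; the only points that warrant a moment's care are the passage from the possibly infinite sum $\sum_{\alpha\in A}\CC_\alpha$ to a finite partial sum containing the given finite-dimensional $\CC'$, and the observation that a subcoalgebra of $\tilde\CH$ lying inside $\CD$ is automatically a subcoalgebra of $\CD$, so that the defining property of $\CD$ applies verbatim to $\CC'$.
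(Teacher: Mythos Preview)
Your proof is correct and follows essentially the same approach as the paper: take a finite-dimensional subcoalgebra $\CC'$ of $\CS(\tilde\CH,\Fm)$, observe it lies in a finite partial sum $\CD$ of superfluous subcoalgebras, and use Lemma \ref{lem:superfluous} inductively to conclude $\CD$ (hence $\CC'$) is superfluous. The paper's version is just a terser rendering of exactly this argument.
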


\begin{proof}
Let $\CC$ be any finite-dimensional subcoalgebra of $\CS(\tilde\CH,\Fm)$. Then $\CC$ is contained in a finite sum $\CD$ of   subcoalgebras of $\tilde\CH$ that are superfluous mod $\Fm$. By induction and Lemma \ref{lem:superfluous}, $\CD$ is superfluous mod $\Fm$, hence so is $\CC$.
\end{proof}

\begin{Definition}
If $\CC\subset\tilde\CH$ is any left coideal, we define
\[\CS(\CC,\Fm)=\big\{x\in\CC\mid\Delta(x)\in\CS(\tilde\CH,\Fm)\otimes\CC\big\}.\]
By coassociativity, $\CS(\CC,\Fm)$ is a left coideal of $\tilde\CH$ contained in $\CC$.
\end{Definition}

\begin{Definition} Given a setting $(\CH,\La)$ and a maximal ideal $\Fm$ of $\La$, we define the \emph{stabilizer (of $\CH$ at $\Fm$)} to be the left $\tilde\CH$-comodule
\[\op{Stab}(\CH,\Fm) = \CH / \CS(\CH,\Fm).\]
In the case when $\CH=\tilde\CH$, $\op{Stab}(\CH,\Fm)$ is a coalgebra.
\end{Definition}

\begin{Example} When $\CH$ is a group algebra $\K G$ we have
$\CS(\K G, \Fm) = \bigoplus_{g\in G, g(\Fm)\neq\Fm}\K g$, and hence
\begin{equation}
\op{Stab}(\K G, \Fm) \cong \K\Stab(G,\Fm),
\end{equation}
where $\Stab(G,\Fm)$ is the stabilizer subgroup of $G$ at $\Fm$.
\end{Example}

The following useful result is analogous to \cite[Thm.~4.6(a)]{Sch1990}.

\begin{Lemma}
A subcoalgebra $\CC$ of $\tilde\CH$ is superfluous mod $\Fm$ iff $\tilde\CH_0\cap\CC$ is superfluous mod $\Fm$, where $\tilde\CH_0$ is the coradical of $\tilde\CH$.
\end{Lemma}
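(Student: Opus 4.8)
The plan is to prove the two implications separately; the forward one is immediate, and the reverse is built on the coradical filtration of a finite-dimensional subcoalgebra. Throughout I use standard coalgebra facts (see \cite{Mon1993}): for a subcoalgebra $\CD\subseteq\tilde\CH$ its coradical $\CD_0$ is a cosemisimple subcoalgebra of $\tilde\CH$, hence $\CD_0=\CD\cap\tilde\CH_0$ (so $\tilde\CH_0\cap\CC$ is a subcoalgebra and talking about its superfluity is legitimate); the coradical filtration $\CD_0\subseteq\CD_1\subseteq\cdots$ exhausts $\CD$, so a finite-dimensional $\CD$ equals $\CD_n$ for some $n$; and iterating $\Delta(\CD_m)\subseteq\sum_{i+j=m}\CD_i\otimes\CD_j$ yields $\Delta^{(n)}(\CD_n)\subseteq\sum_{i_0+\cdots+i_n=n}\CD_{i_0}\otimes\cdots\otimes\CD_{i_n}$ for the $n$-fold coproduct. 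I also reuse from the proof of Lemma~\ref{lem:superfluous} that $\La\otimes\La$ is a left $\tilde\CH$-module algebra via $x.(a\otimes b)=a\otimes(x\tri b)$, that $\mu_\La\colon\La\otimes\La\to\La$ is an algebra map (as $\La$ is commutative), so that $\ker\mu_\La$ is an ideal of $\La\otimes\La$, and that condition (R2) for a reductor $R$ of $\CD$ is the assertion $\CD.R\subseteq\ker\mu_\La$. The forward direction is then trivial: if $\CC$ is superfluous mod $\Fm$ then its subcoalgebra $\tilde\CH_0\cap\CC$ is, since any finite-dimensional subcoalgebra of the latter is one of $\CC$.

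For the reverse direction, assume $\tilde\CH_0\cap\CC$ is superfluous mod $\Fm$ and let $\CC'\subseteq\CC$ be an arbitrary finite-dimensional subcoalgebra; I must exhibit a reductor for $\CC'$. Let $\CC'_0\subseteq\CC'_1\subseteq\cdots$ be the coradical filtration of $\CC'$ and fix $n$ with $\CC'=\CC'_n$. Then $\CC'_0$ is a finite-dimensional cosemisimple subcoalgebra of $\tilde\CH$ contained in $\CC$, hence in $\tilde\CH_0\cap\CC$, so by hypothesis there is a reductor $R_0$ for $\CC'_0$ mod $\Fm$. I claim that $R:=R_0^{\,n+1}$, the power taken in the ring $\La\otimes\La$, is a reductor for $\CC'$ mod $\Fm$. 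For (R1): the image of $R$ in $\La\otimes\La/(\Fm\otimes\La+\La\otimes\Fm)\cong\La/\Fm\otimes\La/\Fm$ is the $(n+1)$-st power of the image of $R_0$, hence invertible. For (R2): given $x\in\CC'=\CC'_n$, iterating the module-algebra identity $y.(ST)=\sum(y_{(1)}.S)(y_{(2)}.T)$ gives
\[
x.R = x.\bigl(R_0^{\,n+1}\bigr) = \sum (x_{(1)}.R_0)(x_{(2)}.R_0)\cdots (x_{(n+1)}.R_0),
\]
the sum ranging over $\Delta^{(n)}(x)=\sum x_{(1)}\otimes\cdots\otimes x_{(n+1)}$. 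Since $\Delta^{(n)}(\CC'_n)\subseteq\sum_{i_0+\cdots+i_n=n}\CC'_{i_0}\otimes\cdots\otimes\CC'_{i_n}$ and any $n+1$ nonnegative integers summing to $n$ include a zero, every term of $\Delta^{(n)}(x)$ has some tensor factor in $\CC'_0$; for that index $k$ one has $x_{(k)}.R_0\in\ker\mu_\La$ because $R_0$ is a reductor for $\CC'_0$, and since $\ker\mu_\La$ is an ideal the product $(x_{(1)}.R_0)\cdots(x_{(n+1)}.R_0)$ lies in $\ker\mu_\La$. Hence $\CC'.R\subseteq\ker\mu_\La$, which is (R2). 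As $\CC'$ was arbitrary, $\CC$ is superfluous mod $\Fm$.

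The whole content is in the reverse direction, whose crux is the telescoping observation that a single reductor $R_0$ of the coradical $\CC'_0$, raised to a power beyond the coradical length of $\CC'$, already annihilates all of $\CC'$: splitting a coradical-degree-$n$ element into $n+1$ pieces via the iterated coproduct, some piece necessarily drops to degree $0$, and the ideal property of $\ker\mu_\La$ finishes the job. The step I would write out most carefully — and where slips are most likely — is the bookkeeping with the coradical filtration of the \emph{subcoalgebra} $\CC'$: that $\CC'_0\subseteq\tilde\CH_0\cap\CC$, that the filtration of $\CC'$ terminates at $\CC'$, and that the degree estimate $\Delta^{(n)}(\CC'_n)\subseteq\sum_{i_0+\cdots+i_n=n}\CC'_{i_0}\otimes\cdots\otimes\CC'_{i_n}$ holds for $\CC'$'s own filtration. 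These are standard but should be stated precisely.
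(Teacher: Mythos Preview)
Your proof is correct and rests on the same idea as the paper's: both pass to a finite-dimensional subcoalgebra and use the coradical filtration to show that a reductor for the bottom piece, suitably powered up, works for the whole. The paper argues by induction on the filtration degree, using only the defining containment $\Delta(\CC_k)\subseteq\CC_0\otimes\CC+\CC\otimes\CC_{k-1}$ to show that $R_0R_{k-1}$ is a reductor for $\CC_k$; unwinding that induction yields precisely your $R_0^{\,n+1}$. Your version is a direct unwinding: you invoke the stronger (but standard) fact that the coradical filtration is a coalgebra filtration, iterate it to $\Delta^{(n)}$, and finish with a pigeonhole argument that some Sweedler factor lies in $\CC'_0$. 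The paper's route has the minor advantage of needing only the definition of the coradical filtration, while yours is more transparent about the shape of the final reductor and avoids the induction.
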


\begin{proof}
If $\CC$ is superfluous mod $\Fm$, then every subcoalgebra of $\CC$ is too, in particular $\tilde\CH_0\cap\CC$.

Conversely, suppose $\CC$ is a subcoalgebra of $\tilde\CH$ such that $\tilde\CH_0\cap\CC$ is superfluous mod $\Fm$. We will prove that $\CC$ is superfluous mod $\Fm$. Without loss of generality we may assume $\CC$ is finite-dimensional. Let $\CC_k=\tilde\CH_k\cap \CC$ where $\{\tilde\CH_k\}_{k=0}^\infty$ is the coradical filtration of $\tilde\CH$. It suffices to prove that each subcoalgebra $\CC_k$ is superfluous mod $\Fm$. We do this by induction on $k$. For $k=0$ this is true by assumption on $\CC$. Let $k>0$. By assumption on $\CC$ there is a reductor $R_0$ for $\CC_0$ mod $\Fm$. By the induction hypothesis there is a reductor $R_{k-1}$ for $\CC_{k-1}$ mod $\Fm$. Using the notation as in the proof of Lemma \ref{lem:superfluous}, $\CC_0.R_0\subset\ker\mu_\La$ and $\CC_i.R_{k-1}\subset\ker\mu_\La$ for $i=1,2,\ldots,k-1$ and hence by $\Delta(\CC_k)\subset \CC_0\otimes\CC_k+\CC_k\otimes\CC_{k-1}$ we have
\[\CC_k.(R_0R_{k-1}) \subset (\CC_0.R_0)(\CC_k.R_{k-1}) + (\CC_k.R_0)(\CC_{k-1}.R_{k-1})\subset \ker\mu_\La.\]
This shows that $R_0R_{k-1}$ is a reductor for $\CC_k$ mod $\Fm$. 
\end{proof}

\begin{Corollary}
$\CS(\tilde\CH,\Fm)$ equals the sum of all subcoalgebras $\CC$ of $\tilde\CH$ such that $\tilde\CH_0\cap\CC$ is superfluous mod $\Fm$, where $\tilde\CH_0$ is the coradical of $\tilde\CH$.
\end{Corollary}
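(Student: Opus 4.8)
The plan is to combine the preceding lemma with a standard coradical-filtration argument. The preceding lemma states that a subcoalgebra $\CC$ of $\tilde\CH$ is superfluous mod $\Fm$ iff $\tilde\CH_0\cap\CC$ is superfluous mod $\Fm$. The corollary to be proved asks to identify $\CS(\tilde\CH,\Fm)$ with the sum of all subcoalgebras $\CC$ with $\tilde\CH_0\cap\CC$ superfluous mod $\Fm$; so this is essentially a bookkeeping statement once the lemma is in hand.

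First I would let $\CS'$ denote the sum of all subcoalgebras $\CC$ of $\tilde\CH$ such that $\tilde\CH_0\cap\CC$ is superfluous mod $\Fm$, and argue the two inclusions. For $\CS(\tilde\CH,\Fm)\subseteq\CS'$: if $\CC$ is superfluous mod $\Fm$, then by the preceding lemma (the ``only if'' direction, i.e. every subcoalgebra of a superfluous subcoalgebra is superfluous) $\tilde\CH_0\cap\CC$ is superfluous mod $\Fm$, so $\CC$ is one of the subcoalgebras summed to form $\CS'$; since $\CS(\tilde\CH,\Fm)$ is the sum of all such $\CC$, it lies in $\CS'$. For the reverse inclusion $\CS'\subseteq\CS(\tilde\CH,\Fm)$: if $\CC$ is a subcoalgebra with $\tilde\CH_0\cap\CC$ superfluous mod $\Fm$, then by the ``if'' direction of the preceding lemma $\CC$ itself is superfluous mod $\Fm$, hence $\CC\subseteq\CS(\tilde\CH,\Fm)$ by definition (as $\CS(\tilde\CH,\Fm)$ is the sum of all superfluous subcoalgebras). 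Taking the sum over all such $\CC$ gives $\CS'\subseteq\CS(\tilde\CH,\Fm)$.

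There is no real obstacle here — the content was already absorbed by the lemma. The one point requiring (trivial) care is the logical equivalence between ``the sum of all subcoalgebras that are superfluous'' and ``the sum of all subcoalgebras $\CC$ with $\tilde\CH_0\cap\CC$ superfluous'': these two families of subcoalgebras coincide as sets by the lemma, so their sums are literally equal. I would present the proof as a two-line deduction: ``By the preceding lemma, a subcoalgebra $\CC$ of $\tilde\CH$ is superfluous mod $\Fm$ if and only if $\tilde\CH_0\cap\CC$ is superfluous mod $\Fm$. Hence the set of subcoalgebras of $\tilde\CH$ appearing in the definition of $\CS(\tilde\CH,\Fm)$ coincides with the set of subcoalgebras $\CC$ with $\tilde\CH_0\cap\CC$ superfluous mod $\Fm$, and the two corresponding sums are therefore equal.''
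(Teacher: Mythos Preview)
Your proposal is correct and is exactly the intended deduction: the paper states this corollary without proof, treating it as an immediate consequence of the preceding lemma, and your two-line argument makes that immediacy explicit.
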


Recall that a Hopf algebra is \emph{pointed} if every simple subcoalgebra is one-dimensional, or equivalently the coradical coincides with the group algebra of the set of grouplike elements.

\begin{Corollary}
If $\tilde\CH$ is a pointed Hopf algebra with coradical $\K G$ then $\CS(\tilde\CH,\Fm)$ is the sum of subcoalgebras $\CC$ such that $\CC\cap \K G \subset \bigoplus_{g\in G,\, g(\Fm)\neq\Fm} \K g$.
\end{Corollary}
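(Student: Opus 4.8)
The plan is to deduce this from the preceding Corollary (which identifies $\CS(\tilde\CH,\Fm)$ with the sum of all subcoalgebras $\CC\subseteq\tilde\CH$ such that $\tilde\CH_0\cap\CC$ is superfluous mod $\Fm$), together with the Example computing superfluity of a grouplike line $\K g$ and Lemma~\ref{lem:superfluous}. So the only real work is to characterize which subcoalgebras of the coradical $\tilde\CH_0=\K G$ are superfluous mod $\Fm$.

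First I would record the coalgebra structure in play: since $\tilde\CH$ is pointed, $\tilde\CH_0=\K G$ with $G$ the group of grouplike elements, and $\K G$ is cosemisimple with simple subcoalgebras exactly the lines $\K g$, $g\in G$. Hence every subcoalgebra of $\K G$ has the form $\bigoplus_{g\in X}\K g$ for a unique subset $X\subseteq G$; in particular, for an arbitrary subcoalgebra $\CC\subseteq\tilde\CH$ we may write $\tilde\CH_0\cap\CC=\bigoplus_{g\in X}\K g$ for a suitable $X\subseteq G$, and a finite-dimensional subcoalgebra of $\bigoplus_{g\in X}\K g$ is $\bigoplus_{g\in X'}\K g$ for a finite $X'\subseteq X$.

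The core step is to show that $\bigoplus_{g\in X}\K g$ is superfluous mod $\Fm$ if and only if $g\tri\Fm\neq\Fm$ for every $g\in X$. For the forward implication, note that every subcoalgebra of a superfluous subcoalgebra is itself superfluous (immediate from the definition, since condition (R2) for a larger coalgebra entails it for a smaller one); thus each $\K g$, $g\in X$, is superfluous mod $\Fm$, and the Example gives $g\tri\Fm\neq\Fm$. For the converse, let $\CC'$ be a finite-dimensional subcoalgebra of $\bigoplus_{g\in X}\K g$, so $\CC'\subseteq\bigoplus_{g\in X'}\K g$ with $X'\subseteq X$ finite; since each $\K g$ ($g\in X'$) is superfluous mod $\Fm$ by the Example, Lemma~\ref{lem:superfluous} together with an induction on $|X'|$ shows $\bigoplus_{g\in X'}\K g$ is superfluous mod $\Fm$, hence (taking the test subcoalgebra to be the whole of $\bigoplus_{g\in X'}\K g$) admits a reductor $R$ mod $\Fm$; since $\CC'\subseteq\bigoplus_{g\in X'}\K g$, the same $R$ is a reductor for $\CC'$ mod $\Fm$. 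As $\CC'$ was arbitrary, $\bigoplus_{g\in X}\K g$ is superfluous mod $\Fm$.

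Combining these facts: $\tilde\CH_0\cap\CC=\bigoplus_{g\in X}\K g$ is superfluous mod $\Fm$ precisely when $X\subseteq\{g\in G\mid g\tri\Fm\neq\Fm\}$, i.e. precisely when $\tilde\CH_0\cap\CC\subseteq\bigoplus_{g\in G,\,g(\Fm)\neq\Fm}\K g$; substituting this characterization into the preceding Corollary gives the stated description of $\CS(\tilde\CH,\Fm)$. I do not expect a serious obstacle here — the only points needing care are invoking the structure theorem for subcoalgebras of a group coalgebra (so that the subset $X$ is well defined) and matching the definition of \emph{superfluous} (quantified over all finite-dimensional subcoalgebras) with the finitary statement of Lemma~\ref{lem:superfluous}.
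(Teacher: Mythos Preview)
Your proposal is correct and is exactly the argument the paper has in mind: the paper states this Corollary without proof, as an immediate specialization of the preceding Corollary once one knows which subcoalgebras of $\K G$ are superfluous mod $\Fm$. Your derivation of that characterization from the Example on grouplike lines and Lemma~\ref{lem:superfluous} is the intended one; the only cosmetic simplification is that every finite-dimensional subcoalgebra $\CC'$ of $\bigoplus_{g\in X}\K g$ is actually \emph{equal} to $\bigoplus_{g\in X'}\K g$ for some finite $X'\subseteq X$, so the extra ``$\CC'\subseteq$'' step is unnecessary.
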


Recall that a Hopf algebra is \emph{connected} if its coradical is one-dimensional.
\begin{Corollary}
Suppose that $\tilde\CH=\CH'\rtimes W$ where $\CH'$ is a connected Hopf subalgebra and $W$ is a group. Then for all $\Fm\in\MaxSpec(\La)$,
$
\CS(\tilde\CH,\Fm) = \CH'\otimes\bigoplus_{w\in W, w(\Fm)\neq \Fm} \K w
$
and hence
\begin{equation}
\op{Stab}(\tilde\CH,\Fm) \cong \CH'\otimes \K\Stab(W,\Fm).
\end{equation}
\end{Corollary}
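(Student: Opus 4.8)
The plan is to exploit the fact that the underlying coalgebra of $\tilde\CH=\CH'\rtimes W$ is the tensor product coalgebra $\CH'\otimes\K W$, so that $\tilde\CH=\bigoplus_{w\in W}\CH'_w$ as a coalgebra, where $\CH'_w:=\CH'\rtimes\{w\}=\CH'\otimes w$ is a subcoalgebra, isomorphic as a coalgebra to $\CH'$ via $h\mapsto h\rtimes w$, and hence connected with coradical $\K w$ (we also write $w$ for the element $1\rtimes w\in\tilde\CH$). Put $S=\{w\in W\mid w(\Fm)\neq\Fm\}$, so that $W\setminus S=\Stab(W,\Fm)$, a subgroup of $W$. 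The whole statement then reduces to proving $\CS(\tilde\CH,\Fm)=\CH'\otimes\K S$: granted this, $\tilde\CH=(\CH'\otimes\K S)\oplus(\CH'\otimes\K(W\setminus S))$ is a direct sum of two subcoalgebras, hence of two left $\tilde\CH$-subcomodules, so $\op{Stab}(\tilde\CH,\Fm)=\tilde\CH/\CS(\tilde\CH,\Fm)\cong\CH'\otimes\K(W\setminus S)=\CH'\otimes\K\Stab(W,\Fm)$, which is in fact a coalgebra, indeed the Hopf algebra $\CH'\rtimes\Stab(W,\Fm)$.

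The crux is the following structural fact, which I would prove first: \emph{every subcoalgebra $\CC$ of $\tilde\CH$ satisfies $\CC\subseteq\CH'\otimes\K G(\CC)$}, where $G(\CC)$ denotes the set of grouplike elements of $\CC$ (all of which lie in $W$). To establish it, use the coalgebra surjection $p=\ep_{\CH'}\otimes\mathrm{id}_{\K W}\colon\tilde\CH\to\K W$, which restricts to the identity on $\K W$. Write an element $c\in\CC$ as $c=\sum_{w}c_w$ with $c_w\in\CH'_w$; a short computation using the counit axiom gives $(\mathrm{id}\otimes p)\bigl(\Delta(c)\bigr)=\sum_{w\in W}c_w\otimes w$. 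Since $\CC$ is a subcoalgebra, $\Delta(c)\in\CC\otimes\CC$, so this element lies in $\CC\otimes p(\CC)$, and $p(\CC)$, being a subcoalgebra of the group coalgebra $\K W$, is spanned by some subset $T\subseteq W$. Comparing $\K W$-coordinates in $\sum_w c_w\otimes w\in\CC\otimes\K T$ forces $c_w=0$ for $w\notin T$ and $c_w\in\CC$ for $w\in T$; hence $\CC\subseteq\CH'\otimes\K T$, and for each $w\in T$ the intersection $\CC\cap\CH'_w$ is a nonzero subcoalgebra of the connected coalgebra $\CH'_w$, so it contains the coradical $\K w$, giving $w\in G(\CC)$. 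The reverse inclusion $G(\CC)\subseteq T$, together with the fact that every grouplike of $\tilde\CH$ lies in $W$, follows by running the same argument on $\CC=\K c$ for grouplike $c$. In particular, taking $\CC=\tilde\CH$ yields $G(\tilde\CH)=W$ and $\tilde\CH_0=\K W$, and applying the fact to simple subcoalgebras shows $\tilde\CH$ is pointed.

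Granting the structural fact, the conclusion is quick. Since $\tilde\CH$ is pointed with coradical $\K W$, the previous Corollary (with $G=W$) identifies $\CS(\tilde\CH,\Fm)$ with the sum of all subcoalgebras $\CC$ of $\tilde\CH$ for which $\CC\cap\K W\subseteq\bigoplus_{w(\Fm)\neq\Fm}\K w=\K S$; by the structural fact $\CC\cap\K W=\K G(\CC)$, so this condition reads $G(\CC)\subseteq S$, whence $\CC\subseteq\CH'\otimes\K G(\CC)\subseteq\CH'\otimes\K S$, and therefore $\CS(\tilde\CH,\Fm)\subseteq\CH'\otimes\K S$. Conversely $\CH'\otimes\K S$ is itself a subcoalgebra with $(\CH'\otimes\K S)\cap\K W=\K S$, so it is one of the summands and hence contained in $\CS(\tilde\CH,\Fm)$; thus $\CS(\tilde\CH,\Fm)=\CH'\otimes\K S$, and the reduction in the first paragraph completes the proof. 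I expect the structural fact to be the only real obstacle; the remaining ingredients---that images of subcoalgebras under coalgebra maps are subcoalgebras, that subcoalgebras of $\K W$ are spanned by subsets of $W$, that a nonzero subcoalgebra of a connected coalgebra contains its coradical, and that $\CH'\rtimes W$ carries the tensor product coalgebra structure---are standard and valid over the arbitrary ground field $\K$.
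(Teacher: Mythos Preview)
Your proof is correct and follows the approach implicit in the paper: the Corollary is stated without proof there, but is meant to follow from the preceding Corollary for pointed Hopf algebras, which is exactly what you invoke. Your ``structural fact'' that every subcoalgebra $\CC$ of $\CH'\rtimes W$ satisfies $\CC\subseteq\CH'\otimes\K G(\CC)$ (and hence $\CC\cap\K W=\K G(\CC)$) is precisely the missing ingredient needed to pass from the pointed case to the explicit formula, and your argument for it via the coalgebra projection $\ep_{\CH'}\otimes\mathrm{id}_{\K W}$ is clean and correct.
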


\begin{Example}
For $(\CH,\La)=(U(\Fg)\rtimes W, \C[X])$ as in Example \ref{ex:example2}, we have for any $\la\in X$
\begin{equation}
\op{Stab}(\CH,\Fm_\la) \cong U(\Fg)\otimes \K\Stab(W,\Fm_\la).
\end{equation}
\end{Example}

\subsection{Finiteness condition}

\begin{Lemma}\label{lem:finite-fibers}
Let $(\CH,\La)$ be a setting and $\Fm\in\MaxSpec(\La)$ 
such that the stabilizer $\op{Stab}(\CH,\Fm)$ is finite-dimensional. Then for any $\CH$-Galois $\La$-order $F$, the completion $\hat{F}_\Fm$ is finitely generated as a left and right $\hat\La_\Fm$-module. 
\end{Lemma}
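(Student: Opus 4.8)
The goal is to show that $\hat F_\Fm = \varprojlim F/(F\Fm^N + \Fm^N F)$ is finitely generated over $\hat\La_\Fm = \varprojlim \La/\Fm^N$ on both sides, assuming $\op{Stab}(\CH,\Fm) = \CH/\CS(\CH,\Fm)$ is finite-dimensional. The key mechanism should be: the ``superfluous'' part $\CS(\CH,\Fm)$ contributes nothing to $\hat F_\Fm$ because reductors let us absorb it into $F\Fm^N + \Fm^N F$, so $\hat F_\Fm$ is controlled by the finite-dimensional quotient $\op{Stab}(\CH,\Fm)$. I would first reduce to a statement about $\La\#\CH$ (or $L\#\CH$) rather than the abstract $F$: by Theorem~\ref{thm:FO} and Lemma~\ref{lem:direct-sum}, $F$ sits between $\La$ and the standard order $\CF(\CH,\La)$ in a controlled way, and the completions only see a finite amount of data, so it suffices to bound things for a suitable finite-dimensional-over-$L$ piece of $L\#\CH$.

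\textbf{Step 1 (Reduction to a coideal slice).} Pick a finite-dimensional left coideal $C \subset \tilde\CH$ with $C \subset \CH$ large enough that $\La C\La$ surjects onto the relevant truncations of $F$; concretely, using Lemma~\ref{lem:bimodule}(iii)--(iv), $\La C = C\La$ and one can arrange that $F \subset \frac1d \La C$ for some $0 \neq d \in \La$ on each ``layer'' (this is exactly the content of the Futorny--Ovsienko property, Theorem~\ref{thm:FO}, applied to $V = LC \cap$ finite pieces). Then $\hat F_\Fm$ is a subquotient of $(\hat\La_\Fm)$-completions of $\La C$, so it suffices to show $\widehat{(\La C)}_\Fm := \varprojlim \La C/(\Fm^N \La C + \La C \,\Fm^N)$ is finitely generated over $\hat\La_\Fm$.

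\textbf{Step 2 (Killing the superfluous part).} Write the image of $C$ in $\op{Stab}(\CH,\Fm) = \CH/\CS(\CH,\Fm)$; let $\bar C$ be this image, which is finite-dimensional, and let $C^0 = C \cap \CS(\tilde\CH,\Fm)$ — more precisely the part of $C$ landing in the superfluous coradical. Lift a $\K$-basis $\{c^1,\dots,c^m\}$ of $C$ so that $\{c^1,\dots,c^k\}$ span $C^0$ and the images of $c^{k+1},\dots,c^m$ span $\bar C$. For each $c^j$ with $j \le k$: since $\K c^j$ (or rather the subcoalgebra it generates) is superfluous mod $\Fm$, there is a reductor $R_j = \sum_i r_i^{(j)} \otimes s_i^{(j)}$ with $\sum_i r_i^{(j)} (x \tri s_i^{(j)}) = 0$ for $x$ in that subcoalgebra and $R_j$ invertible mod $\Fm \otimes \La + \La \otimes \Fm$. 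The crucial computation (as in the Remark after the definition of reductor) is: in $\La \# \CH$, for any such $c = c^j$,
\[
\sum_i r_i\, c\, s_i = \sum_i r_i (c_{(1)} \tri s_i) c_{(2)} = 0,
\]
using the smash relation \eqref{eq:smash}. Hence $r_i c s_i$-combinations vanish; combined with invertibility of $R_j$ modulo $\Fm^{\otimes}$, one shows by induction on $N$ that $\La c^j \La \subset \Fm^N \La C + \La C \Fm^N$ for all $N$, so $c^j$ dies in $\hat F_\Fm$. I would handle higher coradical layers of $C$ by the product-of-reductors trick from Lemma~\ref{lem:superfluous} and the filtration argument in the last Lemma of the excerpt, so that \emph{all} of $C^0$ is absorbed into the ideal $(F\Fm^N + \Fm^N F)$ in the limit.

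\textbf{Step 3 (Finiteness from the stabilizer).} After Step 2, $\hat F_\Fm$ is generated over $\hat\La_\Fm$ by (the images of) the finitely many lifts $c^{k+1},\dots,c^m$ of a basis of $\bar C \subset \op{Stab}(\CH,\Fm)$, up to the denominator $\frac1d$ which becomes a unit in $\hat\La_\Fm$ once we also absorb $\La/(d) $-torsion — here one needs that $d \notin \Fm$ can be arranged, or otherwise that $d$ acts invertibly after completion; this requires care and is where I would be most worried. Modulo that point, $\hat F_\Fm = \sum_{j > k} \hat\La_\Fm \cdot \bar c^j$ on the left and symmetrically (using Lemma~\ref{lem:bimodule}(ii) and $\La C = C\La$) on the right, giving finite generation on both sides.

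\textbf{Main obstacle.} The hard part is Step 2: making rigorous the claim that superfluousness of $\CS(\CH,\Fm)$ forces that part of $L\#\CH$ to vanish in the \emph{completed} algebra $\hat F_\Fm$ uniformly in $N$, not just ``mod $\Fm$''. The reductor identity $\sum_i r_i c s_i = 0$ is exact, but one must leverage the invertibility of $R$ only \emph{modulo $\Fm$} to peel off successive powers of $\Fm$, i.e. run a Nakayama/$\Fm$-adic induction showing $\La c \La \subset F\Fm^N + \Fm^N F$ for every $N$. Getting the bookkeeping right across coradical layers (so that a single product of reductors works simultaneously for an entire finite-dimensional subcoalgebra, as in Lemma~\ref{lem:superfluous} and the final Lemma) is the technical heart, and correctly tracking that $d$ survives completion (Step 3) is the secondary concern.
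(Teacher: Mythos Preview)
Your diagnosis of the mechanism is correct—reductors kill the superfluous part, and Futorny--Ovsienko controls what remains—but the logical architecture of Step~1 has a genuine gap that the paper's proof avoids.

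The problem is your attempt to choose a \emph{single} finite-dimensional coideal $C$ and a \emph{single} $d \in \La\setminus\{0\}$ with $F \subset \tfrac{1}{d}\La C$. No such $C$ exists in general: since $LF = L\#\CH$ and $\CH$ is typically infinite-dimensional, $F$ is not contained in $LC$ for any finite-dimensional $C$. Theorem~\ref{thm:FO} only tells you that $F \cap V$ is finitely generated for each fixed finite-dimensional $V \subset L\#\CH$; it does not give a global bound on $F$. Your hedge ``on each layer'' doesn't rescue this, because the $C$ and $d$ would then depend on $N$, and you would lose the uniform finite generating set needed at the end. This is also why your worry about whether $d$ becomes a unit in $\hat\La_\Fm$ is unresolvable as stated.

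The paper inverts the logic. Instead of bounding $F$ from above by something finite, it picks one finite-dimensional left coideal $\CH' \subset \CH$ complementary to $\CS(\CH,\Fm)$ (enlarged to a coideal via the finiteness theorem for coalgebras), sets $F' = F \cap L\CH'$, and applies Theorem~\ref{thm:FO} \emph{once} to conclude $F'$ is finitely generated as a left and right $\La$-module—no denominators to track. The work is then to show $F' \twoheadrightarrow F/(F\Fm^N + \Fm^N F)$ for every $N$. Given $X \in F$, write $X = \sum f_i x^i + \sum g_j y^j$ with $x^i \in \CH'$, $y^j \in \CS(\CH,\Fm)$; choose a reductor $R$ for a finite-dimensional subcoalgebra of $\CS(\tilde\CH,\Fm)$ containing all the $y^j_{(1)}$. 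Then $R\cdot X := \sum_k a_k X b_k \in F'$, because the $y$-terms vanish exactly by (R2) and the $x$-terms stay in $L\CH'$ since $\CH'$ is a left coideal. Finally—and this is the clean replacement for your ``Nakayama/$\Fm$-adic induction''—invertibility of $R$ modulo $\La\otimes\Fm + \Fm\otimes\La$ immediately gives invertibility modulo $\La\otimes\Fm^N + \Fm^N\otimes\La$ (unit plus nilpotent is a unit), so there is $\bar R_N$ with $X - \bar R_N R \cdot X \in F\Fm^N + \Fm^N F$ and $\bar R_N R \cdot X \in F'$. No coradical-layer bookkeeping is needed here; that filtration was used earlier only to establish properties of $\CS(\tilde\CH,\Fm)$, not in this proof.

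So: keep your Step~2 computation (it is essentially correct, modulo being precise that the reductor condition applies to $c_{(1)} \in \CS(\tilde\CH,\Fm)$ rather than to $c$ itself), but replace your Step~1 with the choice $\CH'$ = finite-dimensional coideal complement to $\CS(\CH,\Fm)$ and $F' = F \cap L\CH'$, and prove surjectivity of $F'$ onto the truncations rather than trying to bound $F$ from above.
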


\begin{proof}
Let $\CH'$ be a finite-dimensional linear complement to $\CS(\CH,\Fm)$ in $\CH$. By the finiteness theorem for coalgebras, we may without loss of generality assume that $\CH'$ is a finite-dimensional left $\tilde\CH$-coideal contained in $\CH$.
Put $F'=F\cap L\CH'$. Since $\CH'$ is a left coideal, $F'$ is a $(\La,\La)$-subbimodule of $F$.
By the Futorny-Ovsienko property (Theorem \ref{thm:FO}), $F'$ is finitely generated as a left and right $\La$-module. Thus it suffices to show that $F'$ surjects onto $F/(F\Fm^N+\Fm^NF)$ for all $N>0$.

Let $X\in F$ be arbitrary. Write
$X=f_1x^1+f_2x^2+\cdots+f_nx^n+g_1y^1+g_2y^2+\cdots+g_my^m$
where $x^i\in\CH'$ and $y^j\in \CS(\CH,\Fm)$ (superscripts being indices, not powers) and $f_i,g_j\in L$.
Let $\CC$ be a finite-dimensional subcoalgebra of $\CS(\tilde\CH,\Fm)$ such that $\Delta(y_j)\in\CC\otimes\CH$ for all $j$.
Pick a reductor for $\CC$ mod $\Fm$, say
$R=\sum_k a_k\otimes b_k\in \La\otimes\La$.
Let us regard $F$ as a left $\La\otimes\La$-module via $(a\otimes b)\cdot X = aXb$ for $a,b\in\La$ and $X\in F$.
Consider
\begin{align*}
Y &= R\cdot X =\\
&=\sum_k a_k X b_k \\
&= \sum_i f_i\sum_k a_k(x^i_{(1)}\tri b_k) x^i_{(2)}
+\sum_j g_j\sum_k a_k (y^j_{(1)}\tri b_k)y^j_{(2)}
\end{align*}
The sum over $j$ is zero because all $y^j_{(1)}\in\CC$ and $R$ is a reductor for $\CC$ mod $\Fm$.
Since $\CH'$ is a left $\tilde\CH$-coideal, 
all $x^i_{(2)}\in\CH'$. Thus $Y\in F'$.

On the other hand, since $R$ is invertible mod $\La\otimes\Fm+\Fm\otimes\La$, it is invertible mod $\La\otimes\Fm^N+\Fm^N\otimes\La$ for any $N>0$.
So for every $N>0$ there is an element $\bar R_N\in\La\otimes\La$ such that $\bar R_N R-1\otimes 1\in \La\otimes\Fm^N+\Fm^N\otimes\La$.
Put $X' = \bar R_N R\cdot X$.
Since $R\cdot X\in F'$ and $F'$ is a $(\La,\La)$-subbimodule of $F$, we have $X'\in F'$. On the other hand
 $X-X'=(1\otimes 1-\bar R_N R)\cdot X\in (\La\otimes\Fm^N+\Fm^N\otimes\La)\cdot X\subset F\Fm^N+\Fm^N F$.
In other words, the coset $X+F\Fm^N+\Fm^N F$ is in the image of $F'$ under the canonical projection  $F\to F/(F\Fm^N+\Fm^N F)$. Since $X$ was arbitrary this proves the claim.
\end{proof}

The above lemma can be generalized slightly. We will need this in Section \ref{sec:loc-fin}.

\begin{Lemma}\label{lem:finite-fibers-general}
Let $(\CH,\La)$ be a setting,
$\CC$ a left coideal of $\tilde\CH$ contained in $\CH$,
and $\Fm\in\MaxSpec(\La)$ 
such that the stabilizer $\op{Stab}(\CC,\Fm)$ is finite-dimensional. Then for any $\CH$-Galois $\La$-order $F$, the completion $\hat{F(\CC)}_\Fm$ where $F(\CC)=F\cap L\CC$, is finitely generated as a left and right $\hat\La_\Fm$-module. 
\end{Lemma}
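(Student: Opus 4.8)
The plan is to adapt the proof of Lemma \ref{lem:finite-fibers} essentially verbatim, paying attention only to where the ambient coideal subalgebra $\CH$ was used versus where the specific coideal $\CC$ is relevant. First I would choose a finite-dimensional linear complement $\CC'$ to $\CS(\CC,\Fm)$ inside $\CC$; since $\op{Stab}(\CC,\Fm) = \CC/\CS(\CC,\Fm)$ is finite-dimensional by hypothesis, this is possible, and by the finiteness theorem for coalgebras we may enlarge $\CC'$ so that it is a finite-dimensional left $\tilde\CH$-coideal still contained in $\CC$ (here one uses that $\CS(\CC,\Fm)$ is itself a left coideal, so a complement inside $\CC$ that is a coideal can be arranged). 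Then set $F' = F\cap L\CC'$; because $\CC'$ is a left coideal, $F'$ is a $(\La,\La)$-subbimodule of $F$, and by the Futorny--Ovsienko property (Theorem \ref{thm:FO}) it is finitely generated as a left and as a right $\La$-module. The target of the proof is to show $F' = F(\CC)\cap L\CC'$ surjects onto $F(\CC)/\big(F(\CC)\Fm^N+\Fm^N F(\CC)\big)$ for every $N>0$, which then gives that $\hat{F(\CC)}_\Fm$ is finitely generated over $\hat\La_\Fm$ on both sides.

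Next I would run the reductor argument. Take an arbitrary $X\in F(\CC) = F\cap L\CC$, and write $X = \sum_i f_i x^i + \sum_j g_j y^j$ with $x^i\in\CC'$, $y^j\in\CS(\CC,\Fm)$, and $f_i,g_j\in L$ — this decomposition exists precisely because $\CC = \CC'\oplus\CS(\CC,\Fm)$ and $X\in L\CC$. Here is the one genuinely new point: $\CS(\CC,\Fm)$ was defined as $\{x\in\CC\mid\Delta(x)\in\CS(\tilde\CH,\Fm)\otimes\CC\}$, so for each $j$ we have $\Delta(y^j)\in\CS(\tilde\CH,\Fm)\otimes\CH$, meaning $y^j_{(1)}\in\CS(\tilde\CH,\Fm)$ while $y^j_{(2)}\in\CH$ (not necessarily in $\CC$). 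Pick a finite-dimensional subcoalgebra $\CC_0$ of $\CS(\tilde\CH,\Fm)$ with $\Delta(y^j)\in\CC_0\otimes\CH$ for all $j$, which exists by the finiteness theorem for coalgebras applied to the finitely many $y^j$, and then pick a reductor $R = \sum_k a_k\otimes b_k$ for $\CC_0$ mod $\Fm$. Regarding $F$ as a left $\La\otimes\La$-module via $(a\otimes b)\cdot X = aXb$, the element $Y = R\cdot X = \sum_k a_k X b_k$ expands, via the smash relation \eqref{eq:smash}, as $\sum_i f_i\sum_k a_k(x^i_{(1)}\tri b_k)x^i_{(2)} + \sum_j g_j\sum_k a_k(y^j_{(1)}\tri b_k)y^j_{(2)}$; the $j$-sum vanishes because every $y^j_{(1)}\in\CC_0$ and $R$ is a reductor for $\CC_0$, and the surviving terms have $x^i_{(2)}\in\CC'$ since $\CC'$ is a left coideal, so $Y\in L\CC'$ and also $Y\in F$, hence $Y\in F'$.

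Finally, the completion step is identical to the original lemma: invertibility of $R$ mod $\La\otimes\Fm+\Fm\otimes\La$ lifts to invertibility mod $\La\otimes\Fm^N+\Fm^N\otimes\La$ for all $N$, so there is $\bar R_N$ with $\bar R_N R - 1\otimes 1\in\La\otimes\Fm^N+\Fm^N\otimes\La$; setting $X' = \bar R_N R\cdot X$ gives $X'\in F'$ (again $F'$ is a subbimodule) while $X - X' = (1\otimes 1 - \bar R_N R)\cdot X\in F(\CC)\Fm^N + \Fm^N F(\CC)$, so $X'$ represents the same coset as $X$. Since $X\in F(\CC)$ was arbitrary, $F'$ surjects onto each finite quotient, and the inverse limit $\hat{F(\CC)}_\Fm$ is therefore finitely generated (left and right) over $\hat\La_\Fm$ because $F'$ is finitely generated over $\La$. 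I expect the only subtle point — and hence the ``main obstacle'' — to be the bookkeeping around the definition of $\CS(\CC,\Fm)$: one must be careful that $y^j_{(2)}$ lands in $\CH$ rather than in $\CC$, but this causes no harm because it is multiplied by scalars from $L$ and we only ever need $Y\in L\CC'\cap F$ at the end, together with the fact that a coideal complement $\CC'$ can actually be selected inside $\CC$. Everything else is a transcription of Lemma \ref{lem:finite-fibers} with $\CH,F$ replaced by $\CC,F(\CC)$.
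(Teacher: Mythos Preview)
Your proposal is correct and follows exactly the approach the paper intends: its proof of Lemma~\ref{lem:finite-fibers-general} simply says to replace $\CH$ by $\CC$ in the proof of Lemma~\ref{lem:finite-fibers}, noting that $F(\CC)$ is only a $(\La,\La)$-bimodule rather than an algebra, and you have carried out precisely that transcription with the necessary bookkeeping. Your one flagged subtlety (that $y^j_{(2)}$ need only lie in $\CH$, not $\CC$) is handled correctly, since those terms are annihilated by the reductor before their location matters; the parenthetical about needing $\CS(\CC,\Fm)$ to be a coideal in order to choose a coideal complement $\CC'$ is not quite the right justification---what you actually use is the fundamental theorem of comodules applied to the finite-dimensional complement inside the left $\tilde\CH$-comodule $\CC$---but the step itself is valid.
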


\begin{proof}
Replace $\CH$ by $\CC$ in the proof of Lemma \ref{lem:finite-fibers}. The only difference is that $F(\CC)$ is not an algebra but just a $(\La,\La)$-subbimodule of $F$, but all the arguments go through without change.
\end{proof}

\subsection{Main finiteness theorem}

\begin{Theorem}\label{thm:main}
Let $(\CH,\La)$ be a setting and $\Fm\in\cfs(\La)$. Assume that that the stabilizer $\op{Stab}(\CH,\Fm)$ is finite-dimensional. Then for every $\CH$-Galois $\La$-order $F$:
\begin{enumerate}[{\rm (i)}]
\item $\mathrm{Irr}(F,\Fm)$ is finite;
\item $V^\Fm$ is finite-dimensional for every $[V]\in\mathrm{Irr}(F,\Fm)$.
\end{enumerate} 
The same holds with $F$ replaced by $F^{\op{op}}$.
\end{Theorem}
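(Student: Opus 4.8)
The plan is to deduce Theorem \ref{thm:main} from the main theorem of \cite{DroFutOvs1994} (stated just above as the third \cite{DroFutOvs1994}-theorem in the Generalities subsection), whose hypothesis is that the completion $\hat F_\Fm$ is finitely generated as a right $\La_\Fm$-module. First I would invoke Lemma \ref{lem:finite-fibers}: since we are assuming $\op{Stab}(\CH,\Fm)$ is finite-dimensional, that lemma tells us precisely that $\hat F_\Fm$ is finitely generated both as a left and as a right $\hat\La_\Fm$-module. Hence the hypothesis of the \cite{DroFutOvs1994} main theorem is satisfied, and parts (i) and (ii) follow immediately: $\op{Irr}(F,\Fm)$ is finite and $V^\Fm$ is finite-dimensional for every $[V]\in\op{Irr}(F,\Fm)$.

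For the ``same holds with $F$ replaced by $F^{\op{op}}$'' clause, the key observation is that $F^{\op{op}}$ is itself an $\CH$-Galois $\La$-order for a suitable setting, or — more directly — that $\hat{F^{\op{op}}}_\Fm = (\hat F_\Fm)^{\op{op}}$, so that finite generation of $\hat F_\Fm$ as a \emph{left} $\hat\La_\Fm$-module (which Lemma \ref{lem:finite-fibers} also supplies) translates into finite generation of $\hat{F^{\op{op}}}_\Fm$ as a \emph{right} $\hat\La_\Fm$-module. Applying the \cite{DroFutOvs1994} main theorem to $F^{\op{op}}$ then yields finiteness of $\op{Irr}(F^{\op{op}},\Fm)$ and finite-dimensionality of the $\Fm$-weight spaces there as well. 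The symmetry of Lemma \ref{lem:finite-fibers} (it gives both the left and the right statement) is exactly what makes this costless.

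I do not expect any real obstacle here: all the substance has been front-loaded into Lemma \ref{lem:finite-fibers} (which in turn rests on the Futorny-Ovsienko property, Theorem \ref{thm:FO}, and the reductor machinery of Section \ref{sec:stabilizer}) and into the cited \cite{DroFutOvs1994} theorem. The only point requiring a line of care is the identification $\hat{F^{\op{op}}}_\Fm \cong (\hat F_\Fm)^{\op{op}}$ as topological rings — one checks that the defining ideals $F\Fm^N+\Fm^N F$ are stable under $X\mapsto X$ viewed in the opposite ring (indeed the set $F\Fm^N+\Fm^NF$ is literally the same subset), so the inverse limits agree and the topologies match, whence discrete simple left modules over one correspond to discrete simple right modules over the other. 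With that in hand the proof is a two-step citation, so it should be written compactly.

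\begin{proof}
By hypothesis $\op{Stab}(\CH,\Fm)$ is finite-dimensional, so Lemma \ref{lem:finite-fibers} applies and shows that $\hat F_\Fm$ is finitely generated both as a left and as a right $\hat\La_\Fm$-module. In particular it is finitely generated as a right $\La_\Fm$-module, so the main theorem of \cite{DroFutOvs1994} (recalled above) gives (i) and (ii) for $F$.

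For $F^{\op{op}}$, note that the subsets $F\Fm^N+\Fm^NF$ of $F$ coincide with the corresponding subsets $F^{\op{op}}\Fm^N+\Fm^NF^{\op{op}}$ of $F^{\op{op}}$ (the set is the same; only the multiplication differs), so $\hat{F^{\op{op}}}_\Fm=(\hat F_\Fm)^{\op{op}}$ as topological rings, and this identification exchanges left and right module structures. Since $\hat F_\Fm$ is finitely generated as a \emph{left} $\hat\La_\Fm$-module, $\hat{F^{\op{op}}}_\Fm$ is finitely generated as a right $\hat\La_\Fm$-module, hence as a right $\La_\Fm$-module. Applying the main theorem of \cite{DroFutOvs1994} to $F^{\op{op}}$ yields that $\op{Irr}(F^{\op{op}},\Fm)$ is finite and that $V^\Fm$ is finite-dimensional for every $[V]\in\op{Irr}(F^{\op{op}},\Fm)$.
\end{proof}
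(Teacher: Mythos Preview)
Your proof is correct and follows exactly the same route as the paper: the paper's entire proof reads ``Immediate by Lemma \ref{lem:finite-fibers} and the main theorem of \cite{DroFutOvs1994}.'' Your additional paragraph justifying the $F^{\op{op}}$ case via $\hat{F^{\op{op}}}_\Fm=(\hat F_\Fm)^{\op{op}}$ and the left/right symmetry of Lemma \ref{lem:finite-fibers} makes explicit what the paper leaves to the reader, and is correct.
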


\begin{proof}
Immediate by Lemma \ref{lem:finite-fibers} and the main theorem of \cite{DroFutOvs1994}.
\end{proof}

The following is example where $\op{Irr}(F,\Fm)$ is infinite.

\begin{Example}\label{ex:infinite}
Let $\La=\C[t]$ and $\CH=\C[\partial]$ with $\partial$ primitive, acting as $d/dt$ on $\La$. Fix a nonzero polynomial $p(t)\in\La$. Let $F=F(p)$ be the subalgebra of $\C(t)\#\CH$ (in fact, of $\C[t]\#\CH$) generated by $\C[t]$ and one more element, $X=p(t)\partial$. $F$ is isomorphic to the algebra on two generators $t,X$ subject to $Xt-tX=p(t)$. Thus $F$ can also be realized  as an Ore extension $\C[t][X;\Id_{\C[t]},d/dt]$. Since $p$ is nonzero we have $\C(t)F=\C(t)\#\C[\partial]$, and since $p$ is a polynomial, $X$ preserves $\C[t]$. Hence $F(p)$ is a $\C[\partial]$-Galois $\C[t]$-order.
If $p(t)$ is a nonzero constant, then $F(p)$ is isomorphic to the first Weyl algebra.
Suppose $p(t)$ is not constant. Let $\la\in\C$ be a root of $p(t)$, and let $\mu\in\C$ be arbitrary. Define $V(\la,\mu)=\C v_{\la,\mu}$ with action $t v_{\la,\mu}=\la v_{\la,\mu}$ and $X v_{\la,\mu} = \mu v_{\la,\mu}$. Clearly the relation $Xt-tX=p(t)$ is preserved since $\la\mu-\mu\la=0=p(\la)$. Thus $V(\la,\mu)$ is a one-dimensional (hence simple and Harish-Chandra with respect to $\C[t]$) module over $F(p)$.
Let $\Fm_\la=(t-\la)$ be the maximal ideal of $\C[t]$ corresponding to $\la$.
Thus, when $\la$ is a root of $p(t)$, then $\op{Irr}_l\big(F(p),\Fm_\la\big)$ is uncountably infinite, as it contains the distinct isomorphism classes $\big[V(\la,\mu)\big]$ for $\mu\in\C$.
\end{Example}

\subsection{Sufficient condition for local finiteness}
\label{sec:loc-fin}

Let $\{\tilde\CH_r\}_{r=0}^\infty$ be the coradical filtration of $\tilde\CH$ and $\CH_r=\CH\cap\Delta^{-1}(\tilde\CH_r\otimes\CH)$ be the induced filtration on $\CH$. Put
\begin{equation}
F_r = F\cap (L\CH_r).
\end{equation}
Since each $\CH_r$ is a coideal of $\tilde\CH$ contained in $\CH$, each $F_r$ is a $(\La,\La)$-subbimodule of $F$. Furthermore $F_0\subset F_1\subset\cdots$ and $F=\cup_{r=0}^\infty F_r$.

\begin{Theorem}
\label{thm:finite-weight-spaces-general}
Let $F$ be an $\CH$-Galois $\La$-order, and $\Fm\in\cfs(\La)$.
Let $V$ be a left Harish-Chandra module over $F$ with respect to $\La$.
Suppose that there exists a non-negative integer $r\ge 0$ such that the following two conditions hold:
\begin{enumerate}[{\rm (i)}]
\item $V=F_r v_\Fm$ for some $v_\Fm\in V_\Fm$,
\item $\dim_\K \op{Stab}(\CH_r, \Fm)<\infty$.
\end{enumerate}
Then $\dim_\K V_\Fm<\infty$.
\end{Theorem}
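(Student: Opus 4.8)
The plan is to prove that $V_\Fm$ is a finitely generated module over $\hat\La_\Fm=\varprojlim\La/\Fm^N$; since $\Fm$ has finite codimension and acts locally nilpotently on $V_\Fm$, this forces $\Fm^N V_\Fm=0$ for $N\gg0$ and hence $\dim_\K V_\Fm<\infty$. The finiteness input is Lemma \ref{lem:finite-fibers-general} applied with $\CC=\CH_r$: hypothesis (ii) guarantees that $\hat{F_r}_\Fm$, the completion of the $(\La,\La)$-subbimodule $F_r=F(\CH_r)$, is finitely generated as a left and right $\hat\La_\Fm$-module. Unwinding the proof of that lemma, there is a finite-dimensional left $\tilde\CH$-coideal $C$ with $\CH_r'\subseteq C\subseteq\CH_r$, where $\CH_r'$ is a linear complement to $\CS(\CH_r,\Fm)$ in $\CH_r$; the bimodule $F(C)=F\cap LC$ is finitely generated on both sides by the Futorny--Ovsienko property (Theorem \ref{thm:FO}); and $F_r=F(C)+F_r\Fm^N+\Fm^N F_r$ for all $N$.

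Next I would feed in hypothesis (i). Fix $N_0$ with $\Fm^{N_0}v_\Fm=0$. For $N\ge N_0$,
\[
V=F_r v_\Fm=F(C)v_\Fm+F_r\Fm^N v_\Fm+\Fm^N F_r v_\Fm=F(C)v_\Fm+\Fm^N V .
\]
Writing $F(C)=\sum_i g_i'\La$ with finitely many right $\La$-generators $g_i'$, the space $P:=F(C)v_\Fm=\sum_i g_i'(\La v_\Fm)$ is finite-dimensional, because $\La v_\Fm=(\La/\Fm^{N_0})v_\Fm$ is. Thus $V=P+\Fm^N V$ with $\dim_\K P<\infty$ for every $N\ge N_0$. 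Projecting onto the generalized $\Fm$-weight space (using $V=\bigoplus_{\Fn}V_{\Fn}$ and that $\Fm$ acts locally invertibly on $V_{\Fn}$ for $\Fn\neq\Fm$) yields $V_\Fm=Q+\Fm^N V_\Fm$ with $Q$ finite-dimensional; and in the same way one sees that $\varprojlim V/\Fm^N V=\varprojlim V_\Fm/\Fm^N V_\Fm$ is a quotient of $\hat{F_r}_\Fm$, hence finitely generated over $\hat\La_\Fm$.

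The remaining --- and, I expect, the only genuinely difficult --- point is to pass from these statements about $V_\Fm$ modulo powers of $\Fm$ to finite generation of $V_\Fm$ itself over $\hat\La_\Fm$; equivalently, to show that the $\Fm$-adic tail $\bigcap_N\Fm^N V_\Fm$ vanishes. This cannot come from the $\La$-module structure alone, since an $\Fm$-divisible weight space is consistent with everything above. Here one must use the $F$-action: via the Drozd--Futorny--Ovsienko weight functor, $V_\Fm$ is a discrete $\hat F_\Fm$-module, and hypothesis (i) should transport through this functor to say that $v_\Fm$ generates $V_\Fm$ over the subbimodule $\hat{F_r}_\Fm\subset\hat F_\Fm$. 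Since $\hat{F_r}_\Fm$ is finitely generated as a left $\hat\La_\Fm$-module, this exhibits $V_\Fm$ as a finitely generated $\hat\La_\Fm$-module, and the elementary endgame above finishes the proof --- this is exactly the mechanism by which the main theorem of \cite{DroFutOvs1994} produces finite-dimensional weight spaces, here applied to the (possibly non-simple but $\CH_r$-cyclic) module $V$.

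Accordingly, the crux I would expect to spend the most effort on is formalizing this last transfer: establishing a variant of the Drozd--Futorny--Ovsienko finiteness argument in which the controlling finiteness is carried by a finitely generated $(\hat\La_\Fm,\hat\La_\Fm)$-subbimodule of $\hat F_\Fm$ rather than by $\hat F_\Fm$ itself, and checking that the bimodule identity $F_r=F(C)+F_r\Fm^N+\Fm^N F_r$ together with $V=F_r v_\Fm$ is precisely what this variant needs. The two reductions above, and the Noetherian/local-nilpotency endgame, are routine.
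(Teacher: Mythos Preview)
Your approach is correct and essentially the same as the paper's. The paper's proof is two lines: Lemma~\ref{lem:finite-fibers-general} gives that $\widehat{(F_r)}_\Fm$ is finitely generated as a right $\hat\La_\Fm$-module, and then $V_\Fm$ is declared to be a homomorphic image of $\widehat{(F_r)}_\Fm\otimes\La/\Fm^n$ (where $\Fm^n v_\Fm=0$), hence finite-dimensional---the implicit map being exactly the one you reach at the end, $X\mapsto X\ast v_\Fm$ via the DFO action of $\hat F_\Fm$ on $V_\Fm$ restricted along $\widehat{(F_r)}_\Fm\to\hat F_\Fm$, with surjectivity coming from $V_\Fm\subset V=F_r v_\Fm$. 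Your intermediate direct computation ($V=P+\Fm^N V$ and the projection to $V_\Fm$) is correct but unnecessary once one invokes the DFO module structure; the ``transfer'' you flag as the crux is precisely what the paper silently absorbs into the citation of \cite{DroFutOvs1994}, so you have correctly located where the real content lies but overestimated how much new work is required beyond quoting that framework.
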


\begin{proof}
By Lemma \ref{lem:finite-fibers-general},
$\displaystyle
\hat{(F_r)}_\Fm=\lim_{\longleftarrow} \frac{F_r}{\Fm^N F_r + F_r \Fm^N}
$
is finitely generated as a right $\hat\La_\Fm$-module.
Now $V_\Fm$ is a homomorphic image of $\hat{(F_r)}_\Fm\otimes \La/\Fm^n$ where $\Fm^n v_\Fm=0$, hence is finite-dimensional.
\end{proof}

\begin{Example}
Consider the rational-differential setting from Example \ref{ex:rational-differential}, $(\CH,\La)=\big(S(V)\rtimes W,\C[V]\big)$ where $V$ is a finite-dimensional complex vector space and $W\le\GL(V)$. Let $F$ be any $\CH$-Galois $\La$-order. It is natural to study $F$-modules $V$ generated by a weight vector $v_\la$ for $\C[V]$, where $\la\in V$ so that $p v_\la=p(\la)v_\la$ for all $p\in\C[V]$, and such that furthermore $F\cap \big(\C(V)S^{r+1}(V)\big)v_\la=0$ for large enough $r$. For such $r$ we have $F_r v_\la=V$ because here $F_r=F\cap \big(\C(V)\# S(V)_{(r)}\rtimes W\big)$. Thus condition (i) holds.
Furthermore $\op{Stab}(\CH_r,\Fm_\la)\cong S(V)_{(r)}\rtimes W_\la$ where $W_\la=\{w\in W\mid w(\Fm_\la)=\Fm_\la\}$. Thus condition (ii) of Theorem \ref{thm:finite-weight-spaces-general} holds iff $W_\la$ is finite.
\end{Example}

\section{Spherical Galois orders}
\label{sec:spherical}

\subsection{Setting and definition}
Let $(\CH,\La)$ be a setting. In this section we further assume that $\CH$ is of the form $\CH=\CH'\rtimes W$, where $\CH'$ is a $\tilde\CH$-coideal subalgebra of $\CH$, and $W$ is finite subgroup of the group of grouplike elements of $\CH$, whose order $|W|$ is invertible in $\K$, normalizing $\CH'$ and $(\K W)\cap\CH'=\K 1_\CH$. 
We assume that $(L\#\CH')^W$ acts faithfully on $L^W$.

The following is a generalization of the principal Galois orders from \cite{Har2020} (in the case when the ring $\La$ from \cite{Har2020} is assumed to be an algebra over a field $\K$ as in this paper).

\begin{Definition}
\label{def:spherical-Galois-order}
A \emph{spherical Galois order with respect to $(\CH',W,\La)$} is a subalgebra $U$ of $(L\#\CH')^W$ such that
\begin{enumerate}[{\rm (i)}]
\item $\La^W \subset U$;
\item $L^W U=(L\#\CH')^W$;
\item $\hat X(a)\in \La^W$ for all $X\in U$ and all $a\in\La^W$.
\end{enumerate}
\end{Definition}

\begin{Example}
The \emph{standard spherical Galois order} is defined as
\begin{equation}
\CK(\CH',W,\La) = \{X\in (L\#\CH')^W\mid \text{$\hat X(a)\in\La^W$ for all $a\in\La^W$}\}.
\end{equation}
The standard spherical Galois order is the unique maximal spherical Galois order with respect to $(\CH',W,\La)$.
\end{Example}

\begin{Example}
When $\CH$ is a group algebra and $\La$ is integrally closed, spherical Galois orders are the same thing as principal Galois orders \cite{Har2020}. Thus examples of spherical Galois orders include finite W-algebras of type $A$ \cite{FutMolOvs2010}\cite{Har2020} and $U_q(\Fgl_n)$ \cite{Har2020}, and Coulomb branches  \cite{Web2019}.
\end{Example}

\subsection{Spherical Galois orders as centralizer subalgebras}

Generalizing \cite{Web2019}, we show that the $W$-centralizer of an $\CH$-Galois $\La$-order is a spherical Galois order, and that any spherical Galois order occurs this way.

Put $\CF=L\#\CH'\rtimes W$ and $\CK=(L\#\CH')^W$.
Let $\CF_\La=\CF(\CH'\rtimes W, \La)$ be the standard $\CH$-Galois $\La$-order and $\CK_{\La^W}=\CK(\CH',W,\La)$ be the standard spherical Galois order with respect to $(\CH',W,\La)$.

\begin{Proposition}\label{prp:spherical}
Let $e=\frac{1}{|W|}\sum_{w\in W} w\in \K W$ be the symmetrizing idempotent.
\begin{enumerate}[{\rm (a)}]
\item The map
\begin{equation}
\psi:\CK \to e\CF e, \quad \psi(X)=eXe,
\end{equation}
is an algebra isomorphism;
\item $\psi(\CK_{\La^W})=e\CF_\La e$;
\item If $U$ is a spherical Galois order with respect to $(\CH',W,\La)$, then $\psi(U)=eFe$ for some $\CH$-Galois $\La$-order $F$ containing the standard $\K W$-Galois $\La$-order $\CF(\K W, \La)$;
\item If $F$ is an $\CH$-Galois $\La$-order, then there exists a spherical Galois order $U$ with respect to $(\CH',W,\La)$ such that
 $\psi(U)=eFe$.
\end{enumerate}
\end{Proposition}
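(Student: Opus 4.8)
The plan is to establish part (a) first, as a self-contained lemma, and then derive parts (b)--(d) as consequences. For (a), note that $e$ is a full idempotent in $\CF$ since $W$ acts faithfully on $L$ (so $L\#\CH' \rtimes W$ contains $L$ and the augmentation-type argument shows $\CF e\CF = \CF$); hence $X\mapsto eXe$ is an algebra isomorphism from $\CK = (L\#\CH')^W$ onto $e\CF e$ provided one identifies $\CK$ with $e\CF e$ correctly. The key point is that right multiplication by $e$ on $L\#\CH' \rtimes W$ has image $(L\#\CH')e$, and $(L\#\CH')^W \to (L\#\CH')e$, $X\mapsto Xe$ is a bijection (with inverse the "averaging" $Ye\mapsto Y$ restricted appropriately), and that $e(L\#\CH')e \cong (L\#\CH')e$ as algebras under $\psi$. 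One checks multiplicativity: for $X,Y\in\CK$, $eXe\cdot eYe = eXeYe = eXYe$ because $e$ is central in $\CK$ (as $X,Y$ are $W$-invariant, $eX = Xe$ and $e^2=e$), so $\psi(XY)=eXYe = \psi(X)\psi(Y)$. Faithfulness of the $\CK$-action on $L^W$ (an explicit hypothesis of the section) guarantees $\psi$ is injective; surjectivity onto $e\CF e$ follows from $eXe$ with $X\in\CF$ arbitrary equalling $e\big(\sum_w wXw^{-1}\big)e\cdot\frac{1}{|W|}$-type averaging lying in $e\CK e$.

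For part (d), given an $\CH$-Galois $\La$-order $F \subset L\#\CH$, I would define $U = \psi^{-1}(eFe)$ and verify the three axioms of Definition \ref{def:spherical-Galois-order}. Since $\La^W \subset \La \subset F$ and $e\La^W e = \La^W e$ corresponds under $\psi^{-1}$ to $\La^W$, axiom (i) holds. For axiom (ii), $L^W U = (L\#\CH')^W$: we have $LF = L\#\CH$ by hypothesis on $F$, so $eLFe = e(L\#\CH)e = e(L\#\CH' \rtimes W)e = (L\#\CH')e$ up to the identification; pushing through $\psi^{-1}$ and using $\psi^{-1}(eLe) = L^W$ gives $L^W U = \CK$. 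For axiom (iii), for $X\in U$ and $a\in\La^W$, we need $\hat X(a)\in\La^W$; but $\psi(X) = eX'e$ for some (lift) $X'\in F$, and the action of $eFe$ on $\La$ — or rather the computation $\widehat{eX'e}(a)$ for $a$ fixed by $W$ — lands in $\La^W$ because $F$ preserves $\La$ and $e$ averages over $W$, which fixes $a$ on the right and symmetrizes on the left. The subtle point is checking that $\hat{}$ interacts correctly with $\psi$, i.e., that $\widehat{\psi^{-1}(eX'e)}$ restricted to $\La^W$ equals (a scalar multiple of) $\widehat{X'}$ restricted to $\La^W$ followed by the projection $\La\to\La^W$; this is where the concrete structure of the smash product and the action $\hat{}$ must be unwound carefully.

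Part (c) is the reverse direction and should be handled symmetrically: given a spherical Galois order $U$, set $F$ to be the subalgebra of $L\#\CH$ generated by $\psi^{-1}\big(\psi(U)\big)$ lifted appropriately together with $\CF(\K W,\La)$ — more precisely, $F = \CF(\K W,\La)\cdot \tilde U \cdot \CF(\K W, \La)$ where $\tilde U$ is a preimage of $e U'e$ under the idempotent-compression, or most cleanly $F = \{Z \in L\#\CH \mid eZe \in \psi(U)\text{ and }\hat Z(\La)\subset\La\}$ intersected with the subalgebra generated by $\CF(\K W,\La)$ and $U$. One then checks $F$ is an $\CH$-Galois $\La$-order: (i) $\La\subset\CF(\K W,\La)\subset F$; (iii) $\hat Z(\La)\subset\La$ by construction; and (ii) $LF = L\#\CH$ because $L\CF(\K W,\La) = L\#\K W$ and $L^W U = \CK$ forces $LF \supset L(L\#\CH')^W \cdot (L\#\K W)$ which spans $L\#\CH'\rtimes W$; finally $eFe = \psi(U)$ by the defining condition and part (a). Part (b) is then the special/maximal case obtained by applying (c)--(d) to $U = \CK_{\La^W}$ and $F = \CF_\La$, or directly: $X\in\CK_{\La^W}$ iff $X\in(L\#\CH')^W$ preserves $\La^W$, and $\psi(X) = eXe\in e\CF_\La e$ iff a lift preserves $\La$, and the two conditions match via the averaging $\La\to\La^W$.

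The main obstacle I anticipate is part (iii) of the axiom-checking in both (c) and (d): correctly identifying how the endomorphism-of-$\La$ assignment $Z\mapsto\hat Z$ transports under the compression-by-$e$ functor $\CF \rightsquigarrow e\CF e \cong \CK$. The issue is that $\CK = (L\#\CH')^W$ naturally acts on $L^W$ (not on $L$), while $F$ acts on $\La$; reconciling "$X\in U$ preserves $\La^W$" with "$eX'e$ comes from an $F$ preserving $\La$" requires the commuting-square identity $\widehat{X|_{L^W}} \circ \pi = \pi \circ \widehat{X'|_L}$ where $\pi = \hat e|_L : L\to L^W$ is the Reynolds/averaging projection — this is a clean statement but needs the hypotheses that $|W|$ is invertible in $\K$ and that $W$ acts faithfully, and it is where I would spend the most care. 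Everything else (fullness of $e$, multiplicativity of $\psi$, the module-theoretic consequences) is routine idempotent bookkeeping of the kind already implicit in \cite{Web2019}.
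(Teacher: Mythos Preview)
Your approach is essentially the same as the paper's, and parts (a), (b), (d) are handled in the same way (the paper defines $U=\sigma(F)$ via the symmetrization map, which is exactly your $\psi^{-1}(eFe)$ by part (a)). Two remarks on part (c), however.

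First, you waver between several definitions of $F$; the paper commits to the subalgebra generated by $eUe$ and $\CF(\K W,\La)$, and observes that since $e\CF(\K W,\La)e = e\La^W e$, this equals $\CF(\K W,\La)\,eUe\,\CF(\K W,\La)$ and hence $eFe = eUe = \psi(U)$. Your alternative ``$\{Z\mid eZe\in\psi(U)\text{ and }\hat Z(\La)\subset\La\}$ intersected with $\ldots$'' is not obviously a subalgebra and would need more care.

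Second, and more substantively, your argument for $LF = L\#\CH$ (``$L^W U = \CK$ forces $LF \supset L(L\#\CH')^W\cdot(L\#\K W)$ which spans $L\#\CH'\rtimes W$'') hides the key step. The claim that $L\cdot (L\#\CH')^W\cdot LW$ exhausts $L\#\CH'\rtimes W$ is not automatic; the paper invokes the well-known fact that $1\in LeL$ (equivalently, $L\#\K W\cong\End_{L^W}(L)$ is Morita equivalent to $L^W$), and then computes
\[
\CH'\subset LeL\,\CH'\,LeL = Le(L\#\CH')eL = Le(L\#\CH')^W eL \subset LF.
\]
You should make this explicit.

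Finally, the ``main obstacle'' you anticipate (transporting $\hat{\phantom{X}}$ across $\psi$) turns out to be a one-line computation in each direction: for (b) and (d) one simply uses $\hat Y(\La)\subset\hat e\hat X(\La^W)\subset\La^W$ and the analogous reverse, with no need for a general commuting-square lemma. The real work is in (c), not here.
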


\begin{proof}
Straightforward generalization of the proofs of \cite[Lem.~2.3, Lem.~2.5]{Web2019} but we provide some details for the convenience of the reader. 

(a) $\psi$ is an algebra map since $eX=Xe$ for $X\in\CK$. Recall that we assume the map $\rho:\CF\to \End(L)$, $X\mapsto\hat X$ is injective. Let $K=L^W$.
The restriction of $\rho$ to $\CK$ gives an injective map $\rho|_\CK:\CK\to\End(K)$. On the other hand if $X\in e\CF e$ then $\rho(X)$ also maps $K$ into $K$, giving an injective map $\rho|_{e\CF e}:e\CF e\to\End(K)$. We have $\rho\circ\psi=\rho|_{\CK}$. This proves that $\psi$ is injective. For any $X\in\CF$ we have $eXe=eX'e$ where $X'=\frac{1}{|W|}\sum_{w\in W} wXw^{-1}\in\CK$ is the symmetrization of $X$. Thus $\psi$ is surjective.

(b) If $X\in \CK_{\La^W}$, let $Y=\psi(X)$. Then $\hat Y (\La)\subset \hat e\hat X(\La^W)\subset \La^W\subset \La$. This shows $\subset$. Conversely, let $X\in \CF_\La$. Let $X'$ be as in the proof of part (a). Then $X'\in\CK_{\La^W}$ and $\psi(X')=eX'e=eXe$, proving $\supset$.

(c) Let $F$ be the subalgebra of $\CF$ generated by $eUe$ and $\CF(\K W,\La)$. By part (b) we have $e\CF(\K W,\La)e=e\La^W e$ and thus $F=\CF(\K W,\La)eUe\CF(\K W,\La)$ and $eFe=eUe=\psi(U)$. Since $\La\subset\CF(\K W,\La)$ we have $\La\subset F$. Since $\hat X(a)\subset\La$ for all $a\in\La$ when $X\in eUe \cup \CF(\K W,\La)$ the same is true for all $X\in F$. It remains to show that $LF=\CF$. Since $L\subset LF$ and $\K W\subset \CF(\K W,\La)\subset F$, it suffices to show that $\CH'\subset LF$. Since $1\in LeL$ (a well-known fact related to that $L\#\K W\cong \End_{L^W}(L)$ is Morita equivalent to $L^W$) we have
$\CH'\subset LeL\CH'LeL=Le(L\#\CH')eL=Le(L\#\CH')^WeL\subset LF$.

(d) Let $\si:\CF\to\CK$ be the symmetrization epimorphism 
given by $\si(X)=\frac{1}{|W|}\sum_{w\in W}wXw^{-1}$. Define $U$ to be the image $\si(F)$. 
We have $\La^W=\si(\La)\subset U$. If $X\in F$ and $\ga\in\Gamma$ then $\hat{\si(X)}(\ga)=\frac{1}{|W|}\sum_{w\in W}\hat w \hat X\hat w^{-1}(\ga)\in \frac{1}{|W|}\sum_{w\in W}\hat w\hat X(\La^W)\subset\frac{1}{|W|}\sum_{w\in W}\hat w(\La)\subset \La^W$.
Lastly, $L^WU=\si(L)\si(F)=\si(LF)=\si(\CF)=\CK$. Thus $U$ is a spherical Hopf Galois order. And $\psi(U)=eUe=e\si(F)e=eFe$ as required.
\end{proof}

Using this we can for example show that spherical Galois orders satisfy the Futorny-Ovsienko property:
\begin{Lemma}
Let $U$ be a spherical Galois order with respect to $(\CH',W,\La)$.
Let $V$ be any finite-dimensional left (right) $K$-subspace of $\CK$. Then $V\cap U$ is a finitely-generated left (right) $\Gamma$-module.
\end{Lemma}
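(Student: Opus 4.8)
The plan is to transfer the statement, via the isomorphism $\psi$ of Proposition~\ref{prp:spherical}, to the Futorny--Ovsienko property (Theorem~\ref{thm:FO}) for an ordinary $\CH$-Galois $\La$-order. Write $\Gamma=\La^W$, $K=L^W$ and $\CH=\CH'\rtimes W$; it suffices to treat the left-handed case, the right-handed one being symmetric (replace ``left $L$-span'' by ``right $L$-span'' and use the right-handed half of Theorem~\ref{thm:FO}).

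First I would record the features of $\psi$ that are needed. By Proposition~\ref{prp:spherical}(c) there is an $\CH$-Galois $\La$-order $F\supseteq\CF(\K W,\La)$ with $\psi(U)=eFe$; in particular $e\in F$, since $e\in\K W$ acts on $\La$ by averaging the automorphisms $w\tri(-)$. Every element of $\CK=(L\#\CH')^W$ commutes with $W$, hence with $e$, so $\psi(X)=eXe=Xe$ for $X\in\CK$, and $\psi$ intertwines the $\Gamma$-bimodule structures (identifying $\Gamma$ with $e\Gamma e$). Consequently $V'\defby\psi(V)=Ve$ is a finite-dimensional left $K$-subspace of $e\CF e\subseteq\CF=L\#\CH$, and $\psi(V\cap U)=V'\cap eFe$; it is enough to prove this last module is finitely generated over $\Gamma$.

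Next, choose a $K$-basis $Y_1,\dots,Y_n$ of $V$. They remain left $L$-linearly independent in $L\#\CH'$ (by Galois descent, $L/K$ being a Galois extension with group $W$, one has $L\#\CH'\cong L\otimes_K\CK$ as left $L$-modules). Let $M=\bigoplus_i LY_i\subseteq L\#\CH'$ be their left $L$-span and put $\tilde V=Me$, a finite-dimensional left $L$-subspace of $\CF$; note $M$ is stable under conjugation by $W$, as the $Y_i$ are $W$-invariant and $L$ is $W$-stable. A short computation gives $V'=\tilde V\cap e\CF e$ (an element of $M$ lies in $\CK$ exactly when its $Y_i$-coefficients lie in $K$, and $X\mapsto Xe$ is injective on $L\#\CH'$), hence $V'\cap eFe=\tilde V\cap eFe$.

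Finally, apply Theorem~\ref{thm:FO} to $F$ and $\tilde V$: the module $N\defby\tilde V\cap F$ is finitely generated as a left $\La$-module. The map $\Phi\colon N\to N$, $z\mapsto eze$, is well defined ($e,z\in F$ gives $eze\in F$; $z\in Me$ and the $W$-stability of $M$ give $eze\in Me=\tilde V$), idempotent, and $\Gamma$-linear since $\Gamma$ commutes with $e$. Its image is $\{z\in N:eze=z\}=\tilde V\cap eFe=\psi(V\cap U)$, so $\psi(V\cap U)$ is a $\Gamma$-module direct summand of $N$. As $\La$ is module-finite over $\Gamma=\La^W$, $N$ is finitely generated over the noetherian ring $\Gamma$, hence so is its direct summand; pulling back along $\psi^{-1}$ finishes the proof. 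I expect the main obstacle to be the bookkeeping underlying the identities $V'=\tilde V\cap e\CF e$ and $\Phi(N)=\psi(V\cap U)$ — both exploiting that $\CK$-elements commute with $e$ and that $M$ is conjugation-stable — together with the (minor) passage from finite generation over $\La$ to finite generation over $\Gamma$, which rests on $\La$ being a finitely generated $\La^W$-module.
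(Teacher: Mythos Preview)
Your proof is correct and follows essentially the same route as the paper: transfer via $\psi$ to the $\CH$-Galois $\La$-order $F$ from Proposition~\ref{prp:spherical}(c), apply Theorem~\ref{thm:FO} to the left $L$-span $\tilde V=LeVe$ of $V$, and descend from $\La$ to $\Gamma=\La^W$. The only difference is that the paper simply observes $V\cap U\hookrightarrow LeVe\cap F$ is an injective $\Gamma$-map and invokes noetherianity of $\Gamma$, whereas you do a bit more work to exhibit $\psi(V\cap U)$ as a $\Gamma$-direct summand of $\tilde V\cap F$; this is correct but unnecessary once $\Gamma$ is known to be noetherian.
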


\begin{proof}
Let $F$ be the corresponding $\CH$-Galois $\La$-order, by definition generated by $eUe$ and $\CF(\K W, \La)$. Then $LeVe$ is a finite-dimensional left $L$-subspace of $\CF$, hence by Theorem \ref{thm:FO}, $LeVe\cap F$ is finitely generated as a left $\La$-module, consequently also finitely generated as a left $\La^W$-module. Since $V\cap U\to LeVe\cap F$, $v\mapsto eve$, is injective, $V\cap U$ is finitely generated as a left $\Ga$-module. Similarly on the right.
\end{proof}

We obtain the following decomposition and maximal commutativity result, which can also be proved exactly as in Lemmas \ref{lem:direct-sum} and \ref{lem:maxcomm}.

\begin{Corollary}
Let $U$ be a spherical Galois order with respect to $(\CH',W,\La)$. Then
\begin{enumerate}[{\rm (a)}]
\item $U=\Ga\oplus U_-$, where $U_-=\big\{X\in U\mid \hat U(1_{\La^W})=0 \big\}$;
\item $\La^W$ is a maximal commutative subalgebra of $U$.
\end{enumerate}
\end{Corollary}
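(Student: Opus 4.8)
The plan is to deduce both statements directly from Proposition \ref{prp:spherical}, exactly as the statement of the Corollary already suggests, and to indicate how one could alternatively mimic the proofs of Lemmas \ref{lem:direct-sum} and \ref{lem:maxcomm}. First I would fix a spherical Galois order $U$ with respect to $(\CH',W,\La)$ and, using Proposition \ref{prp:spherical}(c), choose an $\CH$-Galois $\La$-order $F$ containing $\CF(\K W,\La)$ with $\psi(U)=eUe=eFe$, where $\psi:\CK\to e\CF e$ is the algebra isomorphism of part (a). The key point is that $\psi$ restricts to an algebra isomorphism $U\xrightarrow{\sim} eFe$ intertwining the action on $L^W$: for $X\in U$ one has $\widehat{\psi(X)}=\hat e\hat X\hat e$ on $L$, and since $X$ preserves $\La^W=L^W\cap\La$ and $\hat e$ is the projection onto the $W$-invariants, the action of $U$ on $\La^W$ matches that of $eFe$ on $e\La=\La^W$ under $\psi$.

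For part (a): set $U_-=\{X\in U\mid \hat X(1_{\La^W})=0\}$. Since $1_{\La^W}=1_\La$ is fixed by $W$, I claim $\psi$ carries $U_-$ onto $(eFe)_-:=\{Y\in eFe\mid \hat Y(1_\La)=0\}$ and $\Ga=\La^W$ onto $e\La=\La^W$; this is immediate from $\widehat{\psi(X)}(1_\La)=\hat e\hat X(1_\La)=\hat e\big(\hat X(1_{\La^W})\big)$ and $\hat e(a)=a$ for $a\in\La^W$. Then the decomposition $U=\La^W\oplus U_-$ is transported from the decomposition $eFe=e\La\oplus(eFe)_-$. To get the latter, I would run the proof of Lemma \ref{lem:direct-sum} inside $eFe$: for $Y\in eFe$ put $Y_-=Y-\hat Y(1_\La)$; note $\hat Y(1_\La)=eXe\cdot 1_\La\in\La^W$ lies in $eFe$ because $\La^W=e\La^We\subset e\CF(\K W,\La)e\subset eFe$, so $Y_-\in(eFe)_-$, giving $eFe=e\La+(eFe)_-$; the sum is direct since $Y\in e\La\cap(eFe)_-$ forces $Y=\hat Y(1_\La)=0$. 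That $U_-$ is an $\La^W$-submodule follows because $eFe$ is an algebra and $\La^W\subset eFe$.

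For part (b): by part (a) it suffices to show every $X\in U$ commuting with all of $\La^W$ has $X_-=0$, equivalently (via $\psi$) that every $Y\in eFe$ commuting with $\La^W=e\La$ has $\hat{Y_-}=0$ on $\La^W$. This is the argument of Lemma \ref{lem:maxcomm} run inside the algebra $eFe$ acting faithfully on $K=L^W$: for $a\in\La^W$, $\widehat{Y_-}(a)=\hat Y(a)-\hat Y(1_\La)a=(\hat Y\hat a-\hat a\hat Y)(1_\La)=0$, and since the action of $eFe\subset e\CF e$ on $L^W$ is faithful (this faithfulness is exactly what was used in Proposition \ref{prp:spherical}(a), via $\rho|_{e\CF e}:e\CF e\hookrightarrow\End(K)$), we get $Y_-=0$, hence $X_-=0$. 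The main obstacle — really the only substantive point — is making sure the faithfulness hypothesis and the intertwining property of $\psi$ are applied correctly: one must check that $\widehat{\psi(X)}$ and $\hat X$ agree as operators on $L^W$ and that $\La^W$ is genuinely contained in $eFe$ so that the Lemma \ref{lem:direct-sum}/\ref{lem:maxcomm} arguments have something to work with; both are routine given Proposition \ref{prp:spherical} and the standing assumption that $(L\#\CH')^W$ acts faithfully on $L^W$.
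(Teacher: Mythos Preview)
Your proposal is correct, and the paper's own proof is merely the one-line remark that the result ``can also be proved exactly as in Lemmas~\ref{lem:direct-sum} and~\ref{lem:maxcomm}''. Your route through $eFe$ via Proposition~\ref{prp:spherical} works, but it is more circuitous than what the paper intends: one can simply run the arguments of those two lemmas verbatim with $U$, $\La^W$, and $L^W$ in place of $F$, $\La$, and $L$, never introducing $F$ or $e$ at all. For part~(a) this is immediate since $\hat X(1_{\La^W})\in\La^W\subset U$ by condition~(iii) of Definition~\ref{def:spherical-Galois-order}; for part~(b) the computation $\hat{X_-}(a)=(\hat X\hat a-\hat a\hat X)(1)=0$ goes through for $a\in\La^W$ exactly as before.

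One small point you correctly flag but do not quite resolve: you show $\hat{Y_-}$ (or $\hat{X_-}$) vanishes on $\La^W$, yet invoke faithfulness on $L^W$. The cleanest fix, in either approach, is to observe that since $X$ commutes with $\La^W$ it also commutes with $L^W=\Frac(\La^W)$ (invert), so the same computation gives $\hat{X_-}(a)=0$ for all $a\in L^W$, and then the standing assumption that $(L\#\CH')^W$ acts faithfully on $L^W$ applies directly. Alternatively, in your $eFe$ picture, $Y_-=eY_-e$ implies $\hat{Y_-}(a)=\hat e\,\hat{Y_-}(\hat e(a))=0$ for every $a\in\La$, and then faithfulness of $F$ on $\La$ finishes it.
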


As in \cite[Lem.~2.8]{Web2019}, the quotient functor $M\mapsto eM$ from the category of left $F$-modules to the category of left $U$-modules restricts to a functor from the category of Harish-Chandra modules over $F$ with respect to $\La$ to the category of Harish-Chandra modules over $U$ with respect to $\La^W$. Combined with Theorem \ref{thm:main}, we obtain the following result, which generalizes one of the main result of \cite{FutOvs2014}.

\begin{Corollary}
Let $U$ be a spherical Galois order with respect to $(\CH',W,\La)$. Let $\Fm\in\cfs(\La)$ be such that the stabilizer $\op{Stab}(\CH,\Fm)$ is finite-dimensional. Let $\Fn=\La^W\cap\Fm$. Then there are only finitely many isomorphism classes of simple Harish-Chandra $U$-modules $V$ such that $V^\Fn\neq 0$. Furthermore, $V^\Fn$ is finite-dimensional for any simple Harish-Chandra $U$-module $V$.
\end{Corollary}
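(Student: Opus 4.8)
The plan is to transport the finiteness results for $F$-modules (Theorem~\ref{thm:main}) across the symmetrizing idempotent $e$, using Proposition~\ref{prp:spherical}. First I would use Proposition~\ref{prp:spherical}(c) to choose an $\CH$-Galois $\La$-order $F$ containing $\CF(\K W,\La)$ with $\psi(U)=eFe$; then $U\cong eFe$, and, as observed just before the statement, the functor $M\mapsto eM$ sends Harish-Chandra $F$-modules with respect to $\La$ to Harish-Chandra $U$-modules with respect to $\La^W$. I would also use the standard facts about an idempotent: $M\mapsto eM$ is exact, sends a simple $F$-module to $0$ or to a simple $eFe$-module, and every simple $eFe$-module is isomorphic to $eS$ for some simple $F$-module $S$, unique up to isomorphism, with $eS\neq 0$; moreover $e$ commutes with $\La^W$, since $W$ fixes $\La^W$ pointwise.

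Next I would record the relevant facts about $\Fn=\La^W\cap\Fm$. Since $\La$ is integral over $\La^W$ (each $a\in\La$ is a root of the monic polynomial $\prod_{w\in W}(x-w\tri a)\in\La^W[x]$), the group $W$ acts transitively on the fiber of $\Fn$ in $\MaxSpec(\La)$; this fiber is the finite orbit $W\Fm=\{\Fm_1,\dots,\Fm_k\}$ with $\Fm_1=\Fm$, and every $\Fm_i$ lies in $\cfs(\La)$, being $W$-conjugate to $\Fm$. Moreover, for every $N$ the ring $\La/(\Fn\La)^N$ is Noetherian and has only finitely many primes, all of them maximal and corresponding to the $\Fm_i$; hence it is Artinian with finite-dimensional residue fields, hence finite-dimensional over $\K$. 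Consequently any element killed by a power of $\Fn$ in an arbitrary $\La$-module spans, together with its $\La$-translates, a finite-dimensional submodule supported on $\{\Fm_1,\dots,\Fm_k\}$. I would also note the $W$-equivariance of the stabilizer: $W$ acts on $\tilde\CH$ by (inner) Hopf automorphisms compatibly with its action on $\La$, so applying $w\otimes w$ to reductors shows that a subcoalgebra $\CC$ is superfluous mod $\Fm$ iff $w(\CC)$ is superfluous mod $w\Fm$; hence $\op{Stab}(\CH,\Fm_i)\cong\op{Stab}(\CH,\Fm)$ is finite-dimensional for every $i$, and Theorem~\ref{thm:main} applies at each $\Fm_i$: $\op{Irr}(F,\Fm_i)$ is finite, and $T^{\Fm_i}$ is finite-dimensional for every $[T]\in\op{Irr}(F,\Fm_i)$.

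The core step is then the following. Let $V$ be a simple Harish-Chandra $U$-module with $V^\Fn\neq 0$, and identify $V$ with $eS$ for a simple $F$-module $S$ with $eS\neq 0$; by simplicity $S=F\cdot eS=F\cdot V$. Choosing $0\neq v\in V^\Fn\subset S$, the vector $v$ is killed by a power of $\Fn$, so by the previous paragraph $\La v$ is finite-dimensional and $v$ has a nonzero component $v'\in S^{\Fm_i}$ for some $i$. Then $S=Fv'$; since $\La$ is a Harish-Chandra subalgebra of $F$ and $\Fm_i\in\cfs(\La)$, the module $S=Fv'$ is a Harish-Chandra $F$-module with respect to $\La$ by \cite[Prop.~14]{DroFutOvs1994}, with $[S]\in\op{Irr}(F,\Fm_i)$. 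Therefore $[S]\mapsto[eS]$ maps the finite set $\bigcup_{i=1}^{k}\op{Irr}(F,\Fm_i)$ onto the set of isomorphism classes of simple Harish-Chandra $U$-modules $V$ with $V^\Fn\neq 0$, so that set is finite. For the remaining assertion, let $V$ be an arbitrary simple Harish-Chandra $U$-module: if $V^\Fn=0$ there is nothing to prove, and otherwise, writing $V\cong eS$ as above and using that $e$ commutes with $\La^W$, one gets $V^\Fn=(eS)^\Fn=e\bigl(\bigoplus_{i=1}^{k}S^{\Fm_i}\bigr)$, the generalized $\La^W$-weight space of weight $\Fn$ in $S$ decomposing along the fiber by the second paragraph; each $S^{\Fm_i}$ is finite-dimensional by Theorem~\ref{thm:main}, hence so is $V^\Fn$.

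I expect the main obstacle to be precisely the core step: upgrading the hypothesis that $eS=V$ is Harish-Chandra over $\La^W$ to the conclusion that $S$ itself is Harish-Chandra over $\La$. This is where one genuinely uses the cofiniteness of $\Fn$, the finiteness of its fiber in $\MaxSpec(\La)$, and the resulting finite-dimensionality of $\La/(\Fn\La)^N$, in order to exhibit a genuine $\La$-weight vector inside $V$; the remainder is the dictionary between $F$- and $eFe$-modules together with a direct appeal to Theorem~\ref{thm:main}. A secondary point that should be handled with care is the $W$-equivariance of the stabilizer coalgebra, needed to extend the finite-dimensionality hypothesis from $\op{Stab}(\CH,\Fm)$ to the whole orbit $\op{Stab}(\CH,\Fm_1),\dots,\op{Stab}(\CH,\Fm_k)$ — equivalently, the fact that the particular $F=\CF(\K W,\La)\,eUe\,\CF(\K W,\La)$ is normalized by $W$, so that twisting by $W$ identifies the various $\op{Irr}(F,\Fm_i)$.
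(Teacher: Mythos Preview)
Your proposal is correct and follows essentially the same approach as the paper. The paper deduces the corollary in one sentence from the preceding remark that, as in \cite[Lem.~2.8]{Web2019}, the functor $M\mapsto eM$ restricts to Harish-Chandra modules, combined with Theorem~\ref{thm:main}; your write-up simply unpacks this, supplying the details the paper omits (finiteness and $W$-transitivity of the fiber over $\Fn$, $W$-equivariance of the stabilizer coalgebra, and the passage from a generalized $\Fn$-weight vector in $eS$ to a genuine $\Fm_i$-weight vector in $S$ so that $S$ is seen to be Harish-Chandra over $\La$).
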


\subsection{Morita equivalence}

Following \cite{Web2019}, we say that an $\CH$-Galois $\La$-order $F$ is \emph{Morita for $W$} if $FeF=F$ (equivalently, if $1\in FeF$). Then $F$ is Morita equivalent to its corresponding spherical Galois order $U\cong eFe$. For example, if the $\K W$-Galois $\La$-order $F(\K W)=F\cap LW$ (see Example \ref{ex:sub-galois}) is Morita for $W$, then so is $F$. In particular this holds if $F(\K W)$ is isomorphic to $\End_{\La^W}(\La)$. This in turn holds if $F$ is obtained as in Proposition \ref{prp:spherical} for a spherical Galois order $U$ with respect to $(\CH',W,\La)$ where $\La=\C[V]$ and $W$ acts faithfully on $V$ as a complex reflection group.


\begin{thebibliography}{XXXXX}

\bibitem[BM86]{BerMon1986}
{\sc J. Bergen and S. Montgomery},
{\em Smash products and outer derivations},
Israel J. Math 53 Issue 3 (1986) 321--345.

\bibitem[BFN18]{BraFinNak2018}
{\sc A. Braverman, M. Finkelberg, H. Nakajima}
{\em Towards a mathematical
definition of Coulomb branches of 3-dimensional $\mathcal{N} = 4$ gauge theories, II}
Adv. Theor. Math. Phys. 22 (2018) Issue 5, 1071--1147.

\bibitem[DO03]{DunOpd2003}
{\sc C. F. Dunkl and E. M. Opdam}
{\em Dunkl operators for complex reflection groups}
Proc. London Math. Soc. (3) 86 (2003) 70--108.
\href{https://doi.org/10.1112/S0024611502013825}{DOI:10.1112/S0024611502013825}

\bibitem[DFO94]{DroFutOvs1994}
{\sc Yu. Drozd, V. Futorny, S. Ovsienko}
{\em Generalized Harish-Chandra modules}
In: Finite-dimensional algebras and related topics, 79–93, Kluwer, 1994.

\bibitem[EMV20]{EarMazVis2020}
{\sc N. Early, V. Mazorchuk, E. Vishnyakova}
{\em Canonical Gelfand-Zeitlin Modules over Orthogonal Gelfand-Zeitlin Algebras}
Int. Math. Res. Not. IMRN Issue 20 (2020) 6947--6966.

\bibitem[Eti17]{Eti2017}
{\sc P. Etingof},
{\em Cherednik and Hecke algebras of varieties with a finite groups action},
\href{https://arxiv.org/abs/math/0406499v4}{arXiv:0406499v4 [math.QA] 18 Mar 2017}

\bibitem[FGR16]{FutGraRam2016}
{\sc V. Futorny, D. Grantcharov, L.E. Ramirez}
{\em Singular Gelfand-Tsetlin modules of $\mathfrak{gl}(n)$}
Adv. Math. 290 (2016) 453-482.

\bibitem[FH11]{FutHar2011}
{\sc V. Futorny, J.T. Hartwig},
{\em On the Consistency of Twisted Generalized Weyl Algebras},
Proc. Amer. Math. Soc. 140 (2012), 3349--3363.

\bibitem[FMO10]{FutMolOvs2010}
{\sc V. Futorny, A. Molev, S. Ovsienko}
{\em The Gelfand–Kirillov conjecture and Gelfand–Tsetlin modules for finite W-algebras}
Adv. Math. 223 Issue 3 (2010) 773--796.

\bibitem[FO10]{FutOvs2010}
{\sc V. Futorny, S. Ovsienko},
{\em Galois orders in skew monoid rings},
Journal of Algebra 324 (2010) 598--630.

\bibitem[FO14]{FutOvs2014}
{\sc V. Futorny, S. Ovsienko}
{\em Fibers of characters in Gelfand-Tsetlin categories}
Trans. Amer. Math. Soc. 366 Issue 8 (2014) 4173--4208.

\bibitem[GKV97]{GinKapVas1997}
{\sc V. Ginzburg, M. Kapranov, E. Vasserot}
{\em Residue construction of Hecke algebras}
Adv. Math. 128 No 1 (1997) 1--19

\bibitem[Har20]{Har2020}
{\sc J.T. Hartwig}
{\em Principal Galois orders}
Adv. Math. 359 Article no. 106806 (2020)
\href{https://arxiv.org/abs/1710.04186}
{arXiv:1710.04186 [math.RT]}

\bibitem[Jan87]{Jan1987}
{\sc J.C. Jantzen},
{\em Representations of algebraic groups}
Pure and Applied Mathematics vol 131, Academic Press, Orlando, 1987,  ISBN 0-12-380245-8.

\bibitem[LW19]{LePWeb2019}
{\sc E. LePage, B. Webster},
{\em Rational Cherednik algebras of $G(\ell,p,n)$ from the Coulomb perspective},
\href{https://arxiv.org/abs/1912.00046}{arXiv:1912.00046}.

\bibitem[Maz99]{Maz1999}
{\sc V. Mazorchuk}
{\em Orthogonal Gelfand-Zetlin algebras I}
Beitr. Algebra Geom. 40 Issue 2 (1999) 399--415.

\bibitem[MT99]{MazTur1999}
{\sc V. Mazorchuk, L. Turowska}
{\em Simple weight modules over twisted generalized Weyl algebras}
Comm. Algebra 27 Issue 6 (1999) 2613--2625.

\bibitem[Mon93]{Mon1993}
{\sc S. Montgomery}
{\em Hopf Algebras and Their Actions on Rings}
CBMS Number 82. AMS, 1993.

\bibitem[Nil16]{Nil2016}
{\sc J. Nilsson}
{\em A New family of simple $\mathfrak{gl}_{2n}(\mathbb{C})$-modules}
Pacific J. Math 283 Issue 1 (2016) 1--19.

\bibitem[RZ18]{RamZad2018}
{\sc L.E. Ram\'irez, P. Zadunaisky}
{\em Gelfand–Tsetlin modules over $\mathfrak{gl}(n)$ with arbitrary characters}
J. Algebra 502 (2018) 328--346.

\bibitem[Rou05]{Rou2005}
{\sc R. Rouquier}
{\em Representations of rational Cherednik algebras}
Contemp. Math. 392 (2005) 103.

\bibitem[Sch90]{Sch1990}
{\sc H.J. Schneider}
{\em Representation theory of Hopf Galois extensions}
Israel Journal of Mathematics 72 Issue 1 (1990) 196--231.

\bibitem[Sch04]{Sch2004}
{\sc P. Schauenburg}
{\em Hopf-Galois and bi-Galois extensions}
 Galois theory, Hopf algebras, and semiabelian categories 43 (2004) 469--515.

\bibitem[SW20]{SilWeb2020}
{\sc T. Silverthorne, B. Webster}
{\em Gelfand-Tsetlin modules: canonicity and calculations}
\href{https://arxiv.org/abs/2011.06029}{arXiv:2011.06029 [math.RT]}

\bibitem[Skr20]{Skr2020}
{\sc S. Skryabin}
{\em Flatness of Noetherian Hopf algebras over coideal subalgebras}
\href{https://arxiv.org/abs/2001.02848v1}{arXiv:2001.02848v1 [math.RA]}

\bibitem[SV06]{SkrVan2006}
{\sc S. Skryabin and F. Van Oystaeyen}
{\em The Goldie Theorem for $H$-semiprime algebras}
J. Algebra 305 (2006) 292--320

\bibitem[Vis18]{Vis2018}
{\sc E. Vishnyakova},
{\em A geometric approach to 1-singular Gelfand–Tsetlin $gl_n$-modules}
Differential Geom. Appl. 56 (2018) 155--160.
\href{https://arxiv.org/abs/1704.00170}{arXiv:1704.00170 [math.DG]}

\bibitem[Web19]{Web2019}
{\sc B. Webster}
{\em Gelfand-Tsetlin modules in the Coulomb context}
\href{https://arxiv.org/abs/1904.05415}{arXiv:1904.05415v1 [math.RT]}

\bibitem[Wee19]{Wee2019}
{\sc A. Weekes}
{\em Generators for Coloum branches of quiver gauge theories}
\href{http://arxiv.org/abs/1903.07734v2}{arXiv:1903.07734v2 [math.RT]}.

\end{thebibliography}
\end{document}